\numberwithin{equation}{section}
 \DeclareFontFamily{U}{wncy}{}
\DeclareFontShape{U}{wncy}{m}{n}{<->wncyr10}{}
\DeclareSymbolFont{mcy}{U}{wncy}{m}{n}
\DeclareMathSymbol{\Sh}{\mathord}{mcy}{"58}
\newcommand{\norm}[1]{\ensuremath{\|#1\|}}
\newcommand{\abs}[1]{\ensuremath{\vert#1\vert}}
\DeclareSymbolFont{bbold}{U}{bbold}{m}{n}
\DeclareSymbolFontAlphabet{\mathbbold}{bbold}
\theoremstyle{plain}
\newtheorem{lemma}[equation]{Lemma} 
\newtheorem{proposition}[equation]{Proposition} 
\newtheorem{theorem}[equation]{Theorem} 
\newtheorem{definition}[equation]{Definition}
\def\norm#1.#2.{\left\Vert#1\right\Vert_{#2}}
\def\Norm#1.#2.{\bigl\lVert#1\bigr\rVert_{#2}}
\def\NOrm#1.#2.{\Bigl\lVert#1\Bigr\rVert_{#2}}
\def\NORm#1.#2.{\biggl\lVert#1\biggr\rVert_{#2}}
\def\NORM#1.#2.{\Bigl\lVert#1\Bigr\rVert_{#2}}
\def\ip#1,#2,{\langle #1,#2\rangle}
\def\Ip#1,#2,{\bigl\langle#1,#2\bigr\rangle}
\def\IP#1,#2,{\Bigl\langle#1,#2\Bigr\rangle}
\def\mid{\,:\,}
\def\abs#1{\lvert#1\rvert}
\def\Abs#1{\bigl\lvert#1\bigr\rvert}
\def\R{\mathbb R}
\def\XXint#1#2#3{{\setbox0=\hbox{$#1{#2#3}{\int}$}
     \vcenter{\hbox{$#2#3$}}\kern-.5\wd0}}
\def\eqdef{\stackrel{\mathrm{def}}{{}={}}}
\def\schatten#1{\mathbb S^{#1}} 
\def\besov#1{ \mathbb B^p}
\begin{document}

\title[Paraproducts \& Schatten Norms]{Schatten Class Estimates for Paraproducts in  Multi-parameter setting} 
%and applications}

%\title[Paraproducts, Hankel Operators \& Schatten Norms]{Schatten Estimates for Paraproducts \\ and Hankel Operators in One and Several Parameters  }

\author[M. T. Lacey]{Michael T. Lacey$^*$}

\address{Michael T. Lacey\\
School of Mathematics\\
Georgia Institute of Technology\\
Atlanta,  GA 30332 USA}

%\thanks{$*$ Research supported in part by National Science Foundation DMS Grant}

\email{lacey@math.gatech.edu}

\author[J. Li]{Ji Li$^\dagger$}

\address{Ji Li \\
School of Mathematical and Physical Sciences\\
Macquarie University\\
NSW 2109, Australia}

%\thanks{$\dagger$ Research supported by ARC DP 220100285}

\email{ji.li@mq.edu.au}

\author[B. D. Wick]{Brett D. Wick$^\ddagger$}

\address{Brett D. Wick \\
Department of Mathematics\\
         Washington University - St. Louis\\
         St. Louis, MO 63130-4899 USA}

%\thanks{$\ddagger$ Research supported in part by National Science Foundation DMS Grant}

\email{bwick@wustl.edu}

\begin{abstract}
Let $\operatorname \Pi _{b}$ be a bounded $n$ parameter paraproduct with symbol $b$. We demonstrate that this operator is in the Schatten class $\schatten p$, $0<p<\infty$, if the symbol is in the $n$ parameter Besov space $\besov p$. Our result covers both the dyadic and continuous version of the paraproducts in the multiparameter setting. 
 \end{abstract}

\subjclass[2020]{47B10, 42B20, 42B35}

%\date{\today}
\keywords{Paraproducts, Besov Space, Schatten Class, multi-parameter analysis}

\maketitle
%%%%%%%%%%%%%%%%%%%%%%%%%%

 \section{Introduction, Notation and Statement of Main Results}%One Parameter Paraproducts} \parskip=11pt

Paraproducts are an extremely useful tool in questions arising in harmonic analysis. They provide a nice class of singular integral operators, and when restricting to the dyadic case provide much insight into the mapping properties of Calder\'on--Zygmund operators.  Paraproducts have natural connections with other important operators in analysis.  In particular, it is possible to view paraproducts as either the commutator 
between a function and the Hilbert transform or equivalently a Hankel operator with a certain symbol.  It turns out the properties of the symbol heavily influence the operator theoretic characteristics of the paraproduct.  In this paper, we are interested in the property of the paraproduct (both continuous version and dyadic version) being in  certain Schatten class, with applications to commutators in multi-parameter settings which link to the big Hankel operators.

%The primary Theorem of this paper is as follows, with definitions postponed to the next section. 

We note that along the line of  Calder\'on \cite{Cal}, Coifman--Rochberg--Weiss \cite{CRW}, Uchiyama \cite{U}, Janson--Wolff \cite{JW}, Rochberg--Semmes \cite{RochSem}, the theory of commutators (boundedness, compactness and Schatten class) plays an important role, which connects to the weak factorisation of Hardy space (\cite{CRW}) and Hankel operators (\cite{Pel,Bo,MR2097606}) in the complex analysis, compensated compactness in the PDEs \cite{CLMS}, as well as the quantised derivative in non-commutative analysis and geometry \cite{Connes,LMSZ}. 

%%%%%%%%%%%%%%%%%%%%%%%%%%%%%% SECTION  SECTION SECTION 
%\section{Notations and Definitions} %\label{s.}

To state the main results of this paper, let us recall the relevant paraproducts.  In so doing, we prefer a discrete formulation of these operators.  As is well known, there are two distinct ways to formulate them, with the Haar basis playing a distinguished role.  So, for the sake of definiteness, we first give the Haar paraproducts and the statements of the main results in this context.

%%%%%%%%%%%%%%%%%%%%%%%%%%%%%% SUBSECTION SUBSECTION SUBSECTION SUBSECTION 
\subsection*{One Parameter Paraproducts}
%Haar Paraproducts}%\label{ss.}
 We take the dyadic intervals to be  
 $$ 
 \mathcal D \eqdef \left\{[j2 ^{k}, (j+1)2 ^{k})\mid j,k\in \mathbb Z  \right\}.
 $$ 
 Each dyadic interval $I  $ is a union of its left and right halves, denoted $I _{-} $ and $I_+ $ respectively.  The Haar function $h_I $ adapted to $I $ is 
 \begin{equation}  
 \label{e.Haar}
 h^0_I {}\eqdef{}\abs I ^{-1/2} \left( -\mathbf 1 _{I_-}+\mathbf 1 _{I+} \right). 
 \end{equation}
 The other function of importance is  
 \begin{equation}  
 \label{e.zq}
 h^1 _{I} = \abs I ^{-1/2} \mathbf 1 _{I}. 
 \end{equation}
 Thus, $h^{0}_I $ has integral zero, while $h^1_I $ is a multiple of an indicator function. 
 The function $ h ^{1} _I$ is, in wavelet nomenclature, the `father' wavelet.

 The one parameter Haar paraproducts are 
 \begin{equation}  
 \label{e.Haar-para-one}
 \operatorname B_{\text{\rm Haar}} (f _{1},f _{2})\eqdef{}\sum _{I\in\mathcal D}
 \abs{I} ^{-1/2}h^1_I\prod _{j=1}^{2} \ip f _{j},h_{I}^0, _{L^2(\mathbb R)} .
 \end{equation}
 A classic result in dyadic harmonic analysis is that 
\begin{equation}
\label{e.Haar-BMO}
\norm  \operatorname B _{\rm Haar}(f_1,\cdot).2. {}\simeq{} 
\norm f_1.{\rm BMO} _{\text{\rm dyadic } }(\mathbb R) . .
\end{equation}
In this last display, $ {\rm BMO} _{\text{\rm dyadic } }(\mathbb R)$ is the dyadic  BMO space. See for example \cite{math.CA/0502334}.

To define more general paraproducts, we will appeal to \emph{wavelets}.  We make this more precise now.  For an interval $I $, we say that $\varphi $ is \emph {adapted to $I $} if and only if  $\norm \varphi.2.=1$ and 
\begin{equation} 
\label{e.adapted} 
\Abs{ {d^{\alpha}\over dx^\alpha} \varphi(x)} {}\lesssim{}\abs{I}^{-\alpha-\frac12} \left(  1+\tfrac {\displaystyle \abs{x-c(I)}}{\displaystyle \abs{I}}\right)^{-N}, 
 \qquad \alpha=0,1,2. 
 \end{equation}
 Here, $c(I) $ denotes the center of $I $, and $N $ is a large fixed integer, whose exact value  need not concern us.    By \emph{$\{\varphi_I\mid I\in\mathcal D\}$ are adapted to $\mathcal D $} we mean that for all dyadic intervals $I $, 
 $\varphi_I $ are adapted to $I $. 
 We shall consistently work with functions which are normalized in $L^2(\mathbb R) $.  Some of these functions we will 
 also insist to be normalized to have integral zero.   Indeed, by $\varphi_I $ \emph {has a zero} we mean that 
 $\varphi_I $ has integral zero. 

 A collection $\{\varphi_I\mid I\in\mathcal D\}$ is \emph{\textbf{uniformly adapted}} to $\mathcal D$ if for each $I\in\mathcal D$, $\varphi_I$
 is adapted to $I$, and each $\varphi_I $ is obtained from a single fixed function $\varphi\in C^\infty(\mathbb R) $ as follows
 \begin{equation} \label{e.uniformlyadapted}
 \varphi_I(x)=\frac1{\sqrt {\abs I } }\varphi\left(\tfrac{\displaystyle x-c(I) } {\displaystyle \abs I}\right), 
 \end{equation}
 where $c(I) $ is the center of $I $ and $\varphi$ satisfies
 $$ \int_0^\infty \big| \hat\varphi(t\xi)\big|^2{dt\over t}=1,\qquad \forall \xi\not=0. $$
   Most typically, the notation $\varphi _{I} $ will be used for a function adapted to $I $.  
 
 A collection of functions $\{w_I \mid I\in\mathcal D \} $  is called a \emph{wavelet basis} if the collection is uniformly adapted to $\mathcal D $, and it is an orthonormal basis for $L^2(\mathbb R)$.   It is very easy to see that necessarily, $w$ has integral zero.   The examples of wavelet bases that will be important for us are $L^2(\mathbb R)$ normalized functions adapted to an interval.

 Paraproduct operators are constructed from rank one operators $ f\mapsto \ip f,\varphi, \phi $.  
 A paraproduct is, in its simplest manifestation,
 of the form 
 \begin{equation*}
\operatorname B(f _{1},f _{2})\eqdef{}\sum _{I\in\mathcal D} \abs{I} ^{-1/2}\varphi _{3,I}\prod _{j=1}^{2} \ip f_{j},\varphi_{j,I}, _{L^2(\mathbb R)}.
 \end{equation*}
 Here, the functions $\varphi _{j,I}$, for $j=1,2,3 $ are adapted to $I$.  Two of these three functions are 
 assumed to be of integral zero, and in particular we will always assume that $\varphi _{{1,I} }$ has 
 integral zero.  We are concerned with extensions of the classical  conditions for this operator to 
be bounded on $L^2(\mathbb R)$.   For these more general paraproducts there is the following well-known extension of \eqref{e.Haar-BMO}.  See \cite{math.CA/0502334,MR2002m:47038} for proofs.

%\fxnote{Give some references here}

 %%%%%%%%%%%%%%%%%%%%%%
\begin{theorem}[\cite{math.CA/0502334}]
\label{t.one}
If $\{\varphi_{1,I }\} $ and at least one of $\{\varphi_{j,I}\} $ $j=2,3 $ have zeros and are adapted to $\mathcal D $, then 
\begin{equation}
\norm  \operatorname B(f_1,\cdot).L^2(\mathbb R)\to L^2(\mathbb R).  {}\lesssim{} 
\norm f_1.{\rm BMO}(\mathbb R) . .
\end{equation}
If all three collections $\{\varphi_{j,I}\} $ are uniformly adapted to $\mathcal D $, then the reverse inequality holds. 
 \end{theorem}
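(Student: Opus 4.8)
The plan is to fix $b=f_1$ and study the linear operator
\[
T_b f\eqdef\operatorname B(b,f)=\sum_{I\in\mathcal D}\abs I^{-1/2}\,\langle b,\varphi_{1,I}\rangle\,\langle f,\varphi_{2,I}\rangle\,\varphi_{3,I},
\]
whose $L^2(\mathbb R)\to L^2(\mathbb R)$ operator norm is exactly the quantity $\lVert\operatorname B(f_1,\cdot)\rVert_{L^2\to L^2}$. Writing $b_I\eqdef\langle b,\varphi_{1,I}\rangle$, the two inputs I would rely on are: (i) the Carleson packing bound $\sum_{I\subseteq J}\abs{b_I}^2\lesssim\lVert b\rVert_{\mathrm{BMO}}^2\,\abs J$, valid for every dyadic $J$ because $\{\varphi_{1,I}\}$ is mean-zero adapted, together with its converse $\lVert b\rVert_{\mathrm{BMO}}^2\lesssim\sup_J\abs J^{-1}\sum_{I\subseteq J}\abs{b_I}^2$ when the family is uniformly adapted; and (ii) the Carleson embedding theorem: whenever $\{a_I\}\subset[0,\infty)$ obeys $\sum_{I\subseteq J}a_I\le A\abs J$ for all $J$, then $\sum_I a_I\langle\abs h\rangle_I^2\lesssim A\lVert h\rVert_2^2$, where $\langle\abs h\rangle_I\eqdef\abs I^{-1}\int_I\abs h$. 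I would also use the Bessel bound $\sum_I\abs{\langle h,\psi_I\rangle}^2\lesssim\lVert h\rVert_2^2$, which holds for any mean-zero adapted family $\{\psi_I\}$ by almost orthogonality.

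For the upper bound I pass to the bilinear form $\langle T_b f,g\rangle=\sum_I\abs I^{-1/2}b_I\,\langle f,\varphi_{2,I}\rangle\,\langle\varphi_{3,I},g\rangle$. Exactly one of $\varphi_{2,I},\varphi_{3,I}$ is permitted to be non-cancellative, and the two cases are interchanged by passing to the adjoint $T_b^*$ (which swaps $\varphi_{2,I}\leftrightarrow\varphi_{3,I}$), so it suffices to treat, say, $\varphi_{2,I}$ mean zero and $\varphi_{3,I}$ arbitrary. I would attach the factor $\abs I^{-1/2}$ to the non-cancellative coefficient $\langle\varphi_{3,I},g\rangle$ and apply Cauchy--Schwarz in $I$:
\[
\abs{\langle T_b f,g\rangle}\le\Bigl(\sum_I\abs{\langle f,\varphi_{2,I}\rangle}^2\Bigr)^{1/2}\Bigl(\sum_I\abs{b_I}^2\,\abs I^{-1}\,\abs{\langle\varphi_{3,I},g\rangle}^2\Bigr)^{1/2}.
\]
The first factor is $\lesssim\lVert f\rVert_2$ by the Bessel bound. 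For the second, since $\varphi_{3,I}$ is an $L^2$-normalized bump, $\abs I^{-1/2}\abs{\langle\varphi_{3,I},g\rangle}$ is dominated by a maximal average of $\abs g$ over $I$ (after summing the tails permitted by \eqref{e.adapted}), so the sum is $\lesssim\sum_I\abs{b_I}^2\langle\abs g\rangle_I^2$; applying (ii) with $a_I=\abs{b_I}^2$ and $A\simeq\lVert b\rVert_{\mathrm{BMO}}^2$ from (i) bounds this by $\lVert b\rVert_{\mathrm{BMO}}^2\lVert g\rVert_2^2$. Taking the supremum over $\lVert g\rVert_2\le1$ yields $\lVert T_b\rVert_{L^2\to L^2}\lesssim\lVert b\rVert_{\mathrm{BMO}}$.

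For the reverse inequality all three families are uniformly adapted, hence, by the convergence at $t=0$ of the Calder\'on integral $\int_0^\infty\abs{\hat\varphi(t\xi)}^2\,dt/t=1$, all three have integral zero and generate discrete reproducing formulas. Fixing a dyadic $J$, I would use the dual frames associated with $\{\varphi_{2,I}\}$ and $\{\varphi_{3,I}\}$ to build test functions $f,g$ essentially supported near $J$ with $\langle f,\varphi_{2,I}\rangle\approx\abs I^{1/2}\,\mathbf 1_{\{I\subseteq J\}}$ and $\langle\varphi_{3,I},g\rangle\approx\overline{b_I}\,\mathbf 1_{\{I\subseteq J\}}$, so that $\langle T_b f,g\rangle\approx\sum_{I\subseteq J}\abs{b_I}^2$ while $\lVert f\rVert_2^2\lesssim\abs J$ and $\lVert g\rVert_2^2\lesssim\sum_{I\subseteq J}\abs{b_I}^2$. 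Dividing gives $\lVert T_b\rVert\gtrsim\bigl(\abs J^{-1}\sum_{I\subseteq J}\abs{b_I}^2\bigr)^{1/2}$, and taking the supremum over $J$ and invoking the converse half of (i) yields $\lVert b\rVert_{\mathrm{BMO}}\lesssim\lVert T_b\rVert$. I expect this lower bound to be the principal difficulty. In the Haar model of \eqref{e.Haar-BMO} the middle function is a non-cancellative father wavelet and orthonormality lets one simply test on $\mathbf 1_J$; here every family is only almost orthogonal and cancellative, so the reproducing formulas introduce off-diagonal errors—both from the mutual interaction of the $\varphi_{3,I}$ and from the tails of $f$ outside $J$ permitted by \eqref{e.adapted}—and the crux is to show that these errors are an absorbable fraction of the main term $\sum_{I\subseteq J}\abs{b_I}^2$ uniformly in $J$.
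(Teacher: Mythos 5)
The paper does not actually prove Theorem \ref{t.one}: it is quoted from Lacey--Metcalfe \cite{math.CA/0502334} (``See \cite{math.CA/0502334,MR2002m:47038} for proofs''), so there is no internal argument to compare against. Judged on its own terms, your upper bound is the standard and correct route: reduce by adjointness to the case where $\varphi_{2,I}$ is cancellative, bound $\sum_I \lvert\langle f,\varphi_{2,I}\rangle\rvert^2$ by Bessel, and control $\sum_I \lvert b_I\rvert^2\,\lvert I\rvert^{-1}\lvert\langle\varphi_{3,I},g\rangle\rvert^2$ by dominating $\lvert I\rvert^{-1/2}\lvert\langle\varphi_{3,I},g\rangle\rvert$ by dilated averages and invoking Carleson embedding. (The one input you should not wave at is the transfer of the Carleson packing condition from the uniformly adapted family defining \eqref{e.bmo} to the merely adapted family $\{\varphi_{1,I}\}$; that needs an almost-orthogonality argument of the type in Lemma \ref{l.coefficients}, but it is routine.)

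The lower bound, however, contains a step that fails outright, not merely ``off-diagonal errors to be absorbed.'' You ask for $f$ with $\langle f,\varphi_{2,I}\rangle\approx \lvert I\rvert^{1/2}\mathbf 1_{\{I\subseteq J\}}$ and $\lVert f\rVert_{L^2}^2\lesssim \lvert J\rvert$. Since uniform adaptedness forces $\varphi_{2,I}$ to have mean zero, the family $\{\varphi_{2,I}\}$ satisfies the very Bessel inequality you invoke in the upper bound, so $\sum_{I\subseteq J}\lvert\langle f,\varphi_{2,I}\rangle\rvert^2\lesssim\lVert f\rVert_{L^2}^2<\infty$; but your prescription would make this sum equal to $\sum_{I\subseteq J}\lvert I\rvert=\sum_{k\geq 0}2^{k}\cdot 2^{-k}\lvert J\rvert=\infty$. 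No such $f$ exists in $L^2$, so the ``main term'' of your construction is unrealizable and there is nothing left to perturb. The coefficient profile $\lvert I\rvert^{1/2}\mathbf 1_{\{I\subseteq J\}}$ is precisely that of the \emph{non-cancellative} father wavelet pairing $\langle \mathbf 1_J,h^1_I\rangle$, for which no Bessel inequality holds; once all three families are cancellative, that profile has to be synthesized by a telescoping expansion of $h^1_I$ (equivalently of the averaging operator) over \emph{larger} scales, as in \eqref{e.expandindicator}, and the lower bound is then obtained by isolating a dominant scale-interaction and showing the remaining cross-scale terms decay geometrically. This is exactly the separation-of-scales mechanism the paper deploys in Section \ref{s.one-dimensional} for the Schatten lower bounds (the operators $\operatorname T^1_\alpha$ versus $\operatorname T^m_\alpha$, $m\geq 2$, after projecting with $\operatorname P'_\ell$ and $\operatorname P_\ell$), and it is the missing idea here.
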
 
 
The BMO  norm is explicitly given by
\begin{equation}  
\label{e.bmo}
\norm f.{\rm BMO}(\mathbb R). {}\eqdef{}\sup_U \left[ \abs U ^{-1 }\sum _{I\subset U} \abs{ \ip f, \varphi_I ,}^2 \right] ^{1/2 }.
\end{equation}
Here, the supremum is formed over all intervals $U$ and $\{\varphi_I\} $ is uniformly adapted to $\mathcal D $.  We note that the definition is independent of the choice of $\{\varphi_I\} $ and the dyadic grid $\mathcal D $.

In this paper we are principally concerned with 
the Schatten norms of paraproducts $\operatorname B(f _{1},f _{2})$ and its multiparameter versions.  Recall that the Schatten norm of an operator is given by a $\ell^p$ sum of its singular values, see Section \ref{s.schatten} for the precise definition and more information about these norms.  Much like the case of boundedness, membership of the paraproduct in a Schatten class can be characterized in terms of the function $f_1 $.  

This result has a long history.  In the case of continuous paraproducts, Janson and Peetre showed in \cite{MR924766} that membership in a Schatten class is equivalent to the symbol belonging to the Besov space.  Their method of proof was very much Fourier analytic by viewing the continuous paraproduct as a certain multiplier on the Fourier side and then decomposing the operator in a appropriate manner. See also Pott and Smith, \cite{MR2097606} for the dyadic version. Chao and Peng  \cite{MR1397362} showed  that the one parameter paraproducts arising from ($d$-dyadic) martingale transforms 
are bounded if and only if the symbol belongs to the dyadic Besov space, whose definition is given below.  The proof is very computational, and takes advantage of the notion of ``nearly weakly orthonormal sequences'' introduced by Rochberg and Semmes, \cite{RochSem}.  In fact, in both \cites{MR1397362, MR924766} it is shown that more generally the commutators with singular integral operators (or martingale transforms) belong to a certain Schatten class if and only if the symbol belongs to the appropriate Besov space.  

Membership of the paraproduct in the Schatten class is related to smoothness on the symbol $f_1$, and this is governed by the symbol belonging to a certain Besov space.  We can define the dyadic Besov spaces $\besov p _{\text{\rm dyadic}}(\mathbb R)$ as the set of $f\in L^1_{loc}(\mathbb R)$ such that $ \norm f.\besov p _{\text{\rm dyadic}}(\mathbb R).<\infty$, where
\begin{equation}  \label{e.besov-1d-dyadic}
 \norm f.\besov p _{\text{\rm dyadic}}(\mathbb R). {}\eqdef{}\left[\sum _{I\in\mathcal D } \left[ \abs I ^{-1/2}\abs{\ip f,h_I^0 , } \right] ^p \right] ^{1/p },
 \qquad 0<p<\infty.
 \end{equation}
 Also, the Besov space $\besov p(\mathbb R)$ is the set of Schwartz distributions $f$ such that $ \norm f.\besov p(\mathbb R).<\infty$, where
 \begin{equation}  
 \label{e.besov-one-def}
 \norm f.\besov p(\mathbb R). {}\eqdef{}\left[ \sum _{I\in\mathcal D } \left[ \abs I ^{-1/2}\abs{\ip f,\varphi_I , } \right] ^p \right] ^{1/p },
 \qquad 0<p<\infty
 \end{equation}
 with $\{\varphi_I\} $ is uniformly adapted to $\mathcal D $. This definition is independent of the choice of  $\{\varphi_I\}$ and the dyadic grid $\mathcal D$.

Our main result is then the following theorem, giving an extension and refinement of the boundedness of paraproducts in one parameter given in Theorem \ref{t.one}.

\begin{theorem}[Main Result 1]
\label{t.schatten-one}
Assuming zeros only for the functions $\{\varphi _{1,I }\mid I\in\mathcal D \} $, we have the estimate 
\begin{equation} 
\label{e.schatten-one} 
\norm  \operatorname B(f_1,\cdot). \schatten p.  {}\lesssim{}
 \norm f_1.\besov p(\mathbb R)., \qquad 0<p<\infty.  
\end{equation}
If each of the collections of functions $\{\varphi_{j,I}\} $, $j=1,2,3$, is uniformly adapted to $\mathcal D $, then the reverse inequality holds.
\end{theorem}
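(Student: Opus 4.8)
The plan is to treat $\operatorname B(f_1,\cdot)$ as a linear operator in its second variable and to read off its rank-one structure. Fix the symbol $f_1$ and set $\lambda_I\eqdef\abs I^{-1/2}\ip f_1,\varphi_{1,I},$ for $I\in\mathcal D$; by the definition \eqref{e.besov-one-def} of the Besov norm this gives $\sum_{I\in\mathcal D}\abs{\lambda_I}^p=\norm f_1.\besov p(\mathbb R).^p$. Denoting by $g\otimes e$ the rank-one operator $h\mapsto\ip h,e, g$, we may write
\begin{equation*}
\operatorname B(f_1,\cdot)=\sum_{I\in\mathcal D}\lambda_I\,\varphi_{3,I}\otimes\varphi_{2,I}.
\end{equation*}
Both assertions of Theorem \ref{t.schatten-one} therefore reduce to comparing the $\schatten p$ norm of such a series of rank-one operators with the $\ell^p$ norm of its coefficient sequence $\{\lambda_I\}$.

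The tool I would use is the circle of ideas around the nearly weakly orthonormal (NWO) families of Rochberg--Semmes \cite{RochSem}. Call a family $\{e_I\mid I\in\mathcal D\}$ \emph{NWO} if the maximal operator $\mathcal N_e f(x)\eqdef\sup_{I\ni x}\abs I^{-1/2}\abs{\ip f,e_I,}$ is bounded on $L^2(\mathbb R)$. Any $L^2$-normalized family adapted to $\mathcal D$ satisfies $\mathcal N_e f\lesssim Mf$ with $M$ the Hardy--Littlewood maximal operator, hence is NWO; in particular $\{\varphi_{2,I}\}$ and $\{\varphi_{3,I}\}$ are NWO. The estimate I would invoke is the two-sided bound: for NWO families $\{e_I\},\{g_I\}$ and scalars $\{\mu_I\}$,
\begin{equation*}
\Bigl\lVert \sum_{I\in\mathcal D}\mu_I\,g_I\otimes e_I\Bigr\rVert_{\schatten p}\lesssim\Bigl(\sum_{I\in\mathcal D}\abs{\mu_I}^p\Bigr)^{1/p},\qquad 0<p<\infty,
\end{equation*}
together with the reverse inequality when, in addition, $\{e_I\}$ and $\{g_I\}$ are uniformly adapted to $\mathcal D$. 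Granting this, the forward bound with $\mu_I=\lambda_I$, $e_I=\varphi_{2,I}$, $g_I=\varphi_{3,I}$ yields the upper estimate \eqref{e.schatten-one}, while the reverse bound---available precisely because all three collections are now assumed uniformly adapted---gives the matching lower bound.

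For the forward inequality I would split by generation, $T=\sum_k T_k$ with $T_k=\sum_{\abs I=2^k}\mu_I\,g_I\otimes e_I$. Each $T_k$ factors as $G_k\,\mathrm{diag}(\mu_I)\,E_k^{*}$, where $E_k,G_k$ are the synthesis maps of the single-scale families; since the centers $c(I)$ with $\abs I=2^k$ form a $2^k$-separated lattice, the adapted decay makes these families Bessel uniformly in $k$, so the ideal property of $\schatten p$ gives the single-scale bound $\norm T_k.\schatten p.^p\lesssim\sum_{\abs I=2^k}\abs{\mu_I}^p$. For $0<p\le1$ the quasi-triangle inequality $\norm{\sum_k T_k}.\schatten p.^p\le\sum_k\norm T_k.\schatten p.^p$ then closes the estimate at once. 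The genuinely delicate regime is $p>1$, where summing the triangle inequality over scales is far too lossy and one must extract the geometric decay of the cross-scale interactions $\ip e_I,e_J,\ip g_J,g_I,$ in $\log_2(\abs I/\abs J)$---this Cotlar--Stein-type summation, carried out uniformly for all $1<p<\infty$, is where I expect much of the work to lie. The reverse inequality I would obtain by testing: for $p\ge1$ pair $T$ with $S=\sum_I\sigma_I\,\varphi_{2,I}\otimes\varphi_{3,I}$, where $\{\sigma_I\}$ is the $\ell^{p'}$-normalized dual of $\{\lambda_I\}$, so that $\operatorname{tr}(TS)$ decomposes into a diagonal contribution $\sum_I\lambda_I\sigma_I=\norm\lambda.\ell^p.$ and an off-diagonal remainder absorbed by the same almost-orthogonality estimates; the forward NWO bound controls $\norm S.\schatten{p'}.$ and Schatten duality finishes the case $p\ge1$, the quasi-Banach range $0<p<1$ being handled by the corresponding reverse NWO inequality for uniformly adapted families. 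Overall, the two hard points are this cross-scale summation in the forward bound for $p>1$ and the establishment of the reverse (lower) bound, most delicately in the non-locally-convex range $0<p<1$.
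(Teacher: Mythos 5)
Your opening reduction---rewriting $\operatorname B(f_1,\cdot)$ as $\sum_I\lambda_I\,\varphi_{3,I}\otimes\varphi_{2,I}$ and reducing both directions to a two-sided comparison of $\lVert\sum_I\mu_I\,g_I\otimes e_I\rVert_{\schatten p}$ with $\lVert\mu\rVert_{\ell^p}$---is exactly the paper's first move (Theorem \ref{t.withoutbesov}), and your single-scale factorization $T_k=G_k\,\mathrm{diag}(\mu_I)\,E_k^*$ plus $p$-subadditivity is a correct proof of the upper bound for $0<p\le1$. But the two places you flag as ``where the work lies'' are genuine gaps, not technicalities, and the mechanisms you propose for them would not close. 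For the upper bound with $p>1$, decomposing by \emph{absolute} generation $\lvert I\rvert=2^k$ is the wrong decomposition: the pieces $T_k$ have $\schatten p$ norms $\bigl(\sum_{\lvert I\rvert=2^k}\lvert\mu_I\rvert^p\bigr)^{1/p}$ with no decay in $k$, and the cross-scale inner products of whichever of $\{\varphi_{2,I}\},\{\varphi_{3,I}\}$ lacks zeros decay only like $2^{-\lvert k-j\rvert/2}$ (indeed $\langle h^1_I,h^1_J\rangle=\sqrt{\lvert I\rvert/\lvert J\rvert}$ exactly), which is precisely borderline; moreover Cotlar--Stein controls operator norms, not Schatten norms, so there is no off-the-shelf summation to invoke. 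The paper's device is different: expand the \emph{non-cancellative} member of the pair in a wavelet (or, in the Haar case, the explicit expansion \eqref{e.expandindicator}), regroup by \emph{relative} scale $m$, and observe that each resulting piece $\operatorname B_m$ (resp.\ $\operatorname S_m$) is essentially in singular-value form, with $\schatten p$ norm $\lesssim 2^{\delta(p)\lvert m\rvert}\lVert\mu\rVert_{\ell^p}$, while the expansion coefficients contribute a prefactor of size $2^{-\lvert m\rvert/2}$; the net geometric decay $2^{-\lvert m\rvert\min(1/2,1/p)}$ makes the (quasi-)triangle inequality over $m$ work for \emph{every} $0<p<\infty$ (Lemmas \ref{l.coefficients} and \ref{l.movingprojection}). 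This is also exactly where the cancellation hypothesis enters, which your scheme never uses.

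The lower bound is a second genuine gap. In your trace pairing the off-diagonal term $\sum_{I\ne J}\lambda_I\sigma_J\langle\varphi_{2,J},\varphi_{2,I}\rangle\langle\varphi_{3,I},\varphi_{3,J}\rangle$ carries no smallness parameter: pairs of adjacent, comparably-scaled intervals contribute a fixed positive fraction of the diagonal, so nothing is ``absorbed by the same almost-orthogonality estimates''; and for $0<p<1$ you defer entirely to an unproved reverse NWO inequality. The paper manufactures the missing smallness by sparsifying: pass to a subcollection $\mathcal D_\ell$ with scales separated by $\ell$ that still carries an $\ell^{-1}$ fraction of the $\ell^p$ mass, compress $\operatorname T_\alpha$ by the wavelet projections onto the corresponding spans (Proposition \ref{p.ortholower}), and split the compression into a main term which is \emph{exactly} in singular-value form with norm $\gtrsim\ell^{-2}$ plus a remainder of norm $O(2^{-c\ell})$; an absolute choice of large $\ell$ then wins uniformly in $0<p<\infty$. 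Finally, be cautious about citing the Rochberg--Semmes NWO bounds as a black box valid for all $0<p<\infty$ with no cancellation on either family: their estimates in that generality are for $p\ge1$ and for compactly supported families, and the paper explicitly notes that its adapted (decaying, non-compactly-supported) setting falls outside that framework---which is why it supplies the self-contained argument sketched above.
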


We have the following immediate corollary in the case of Haar paraproducts.
\begin{theorem}
\label{c.schatten-one}  
In the Haar case we have 
\begin{equation} 
\label{e.schatten-dyadic-one}
\norm  \sum _{I\in\mathcal D}  \frac{ \ip f_1,h_I^0, }{\sqrt{\abs I}} h^{\epsilon}_I\otimes h^\delta_I .\schatten p . \simeq \norm f_1.\besov p _{\text{\rm dyadic }}(\mathbb R). , \qquad 0<p<\infty, \ \{\epsilon,\delta\}\not=\{1,1\}.   
\end{equation}
\end{theorem}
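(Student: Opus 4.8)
The operator whose norm appears in \eqref{e.schatten-dyadic-one} is
$T=\sum_{I\in\mathcal D}\lb_I\,h^\ep_I\otimes h^\delta_I$, where $\lb_I=\abs I^{-1/2}\langle f_1,h^0_I\rangle$ and $a\otimes b$ denotes the rank one map $g\mapsto\langle g,b\rangle a$; note that $\big(\sum_I\abs{\lb_I}^p\big)^{1/p}=\norm f_1.\besov p _{\text{\rm dyadic}}(\mathbb R).$ by \eqref{e.besov-1d-dyadic}. The hypothesis $\{\ep,\delta\}\neq\{1,1\}$ forces at least one tensor factor to be the mean zero Haar function $h^0_I$; together with the mean zero pairing of $f_1$ against $h^0_I$ this reproduces exactly the cancellation hypothesis of Theorem \ref{t.schatten-one} (two of three functions carry a zero). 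Since $\norm T.\schatten p.=\norm{T^*}.\schatten p.$ and $T^*=\sum_I\bar\lb_I\,h^\delta_I\otimes h^\ep_I$ has the same coefficient sizes, we may assume the output factor is a genuine Haar function, i.e. $\ep=0$.

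I would first record the model case $\{\ep,\delta\}=\{0,0\}$. Here $\{h^0_I\}_{I\in\mathcal D}$ is an orthonormal basis of $L^2(\mathbb R)$, so $T$ is diagonal in it with eigenvalues $\lb_I$, and $\norm T.\schatten p.^p=\sum_I\abs{\lb_I}^p$ is an exact identity delivering both inequalities simultaneously. This is the prototype for the general argument.

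For the remaining case $\ep=0,\ \delta=1$ the upper bound is obtained by transcribing the argument of Theorem \ref{t.schatten-one} into the Haar setting. The only structural inputs of that argument are the cancellation noted above, the orthonormality of the family attached to $f_1$, and the fact that the relevant families are \emph{nearly weakly orthonormal} in the sense of Rochberg--Semmes \cite{RochSem}. Both Haar families qualify: for $e_I=h^0_I$ and for $e_I=h^1_I=\abs I^{-1/2}\unit_I$ one has the pointwise bound $\sup_{I\ni x}\abs I^{-1/2}\abs{\langle g,e_I\rangle}\lesssim Mg(x)$ with $M$ the Hardy--Littlewood maximal operator, so the associated maximal operators are bounded on $L^2(\mathbb R)$. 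Feeding these bounds into the argument for \ref{t.schatten-one}, with $\besov p(\mathbb R)$ replaced throughout by $\besov p _{\text{\rm dyadic}}(\mathbb R)$, yields $\norm T.\schatten p.\lesssim\norm f_1.\besov p _{\text{\rm dyadic}}(\mathbb R).$; this recovers the known dyadic estimates of \cites{MR1397362,MR2097606}.

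The lower bound is the subtler half, because when $\delta=1$ the matrix of $T$ in the Haar basis is triangular with respect to inclusion and has vanishing diagonal, so a naive pinching to the diagonal gives nothing. Instead I would test against orthonormal systems $\{u_I\},\{v_I\}$ supported on a sparse subfamily of intervals, chosen so that $\abs{\langle T u_I,v_I\rangle}\gtrsim\abs{\lb_I}$ while the father wavelets decouple, and invoke $\big(\sum_I\abs{\langle T u_I,v_I\rangle}^p\big)^{1/p}\lesssim\norm T.\schatten p.$; reassembling the sparse pieces recovers the full $\ell^p$ sum. The main obstacle is the range $0<p<1$, where $\schatten p$ is only a quasi-norm: neither the upper bound nor the testing inequality is available term by term, and one must use the stopping-time/tree organization from the proof of Theorem \ref{t.schatten-one} together with the $p$-quasi-norm form of the nearly-weakly-orthonormal estimate. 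I expect this to be the only point demanding real work; the algebra above and the range $p\ge1$ are routine.
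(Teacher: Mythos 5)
Your reductions are sound (duality to put the mean--zero Haar function in one fixed slot, and the exact diagonal identity when $\epsilon=\delta=0$), and they match the paper's setup. But the proposal stops short of the one device that actually carries the paper's proof of the off-diagonal case $\epsilon=1,\ \delta=0$ for the whole range $0<p<\infty$: the explicit expansion of the father wavelet in coarser Haar functions,
\[
h^1_I=\sum_{J\,:\,I\subsetneq J}\nu_{I,J}\sqrt{\tfrac{|I|}{|J|}}\,h^0_J,\qquad \nu_{I,J}\in\{\pm1\},
\]
which converts $\sum_I\alpha(I)\,h^1_I\otimes h^0_I$ into $\sum_{m\ge1}2^{-m/2}\operatorname S_m$, where each $\operatorname S_m=\sum_J h^0_J\otimes H_{m,J}$ is already in singular value form (the $H_{m,J}$ are orthogonal) and obeys $\lVert\operatorname S_m\rVert_{\schatten p}\lesssim 2^{\delta(p)m}\lVert\alpha\rVert_{\ell^p}$ with $\delta(p)=\max(0,\tfrac12-\tfrac1p)<\tfrac12$; summing the geometric series (triangle inequality for $p\ge1$, $p$-subadditivity for $p<1$) gives the upper bound for every $p$. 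In its place you invoke the Rochberg--Semmes NWO machinery. That does yield the upper bound for $p\ge1$, but it is precisely for $0<p<1$ that the NWO route requires extra structure, and your proposal defers this (``the only point demanding real work'') rather than supplying it.

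The same gap affects your lower bound. The testing inequality $\bigl(\sum_I|\langle Tu_I,v_I\rangle|^p\bigr)^{1/p}\lesssim\lVert T\rVert_{\schatten p}$ against orthonormal systems fails in general for $0<p<1$, as you acknowledge. The paper avoids it entirely: it separates scales by a large integer $\ell$, keeps a subfamily $\mathcal D_\ell$ with $\sum_{I\in\mathcal D_\ell}|\alpha(I)|^p\ge(2\ell)^{-p}$, and sandwiches $\operatorname T_\alpha$ between Haar projections so that $\operatorname P'_\ell\operatorname T_\alpha\operatorname P_\ell$ splits into a main term $\sum_{I\in\mathcal D_\ell}\nu_{I,I'}\,\alpha(I)\,h^0_{I'}\otimes h^0_I$ --- literally in singular value form, so its $\schatten p$ norm is computed exactly for every $p>0$ --- plus a remainder whose $\schatten p$ quasi-norm is $O(2^{-c\ell})$ by the same $\operatorname S_m$ estimates; choosing $\ell$ large absorbs the remainder. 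This is the mechanism your sparse-testing sketch is groping toward, but without the expansion of $h^1_I$ and the exact singular-value form of the main term, the range $0<p<1$ is not covered, and that range is the substance of the theorem.
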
 
We remark that a variation of Theorem \ref{c.schatten-one} for dyadic martingale transforms was previously studied by Chao and Peng \cite{MR1397362}.  Additionally, a related proof of Theorem \ref{c.schatten-one} was given by Pott and Smith \cite{MR2097606}.

We will now use these ideas to extend the results from one-parameter to the multi-parameter setting.

\subsection*{Multi-parameter paraproducts}
We use many of the ideas from the previous section to form the tensor product basis in $L^2(\mathbb R^n)=L^2(\mathbb R\times\cdots\times\mathbb R)$.  Let $ \mathcal R  \eqdef \mathcal D\times \cdots \times \mathcal D$ be the collection of dyadic rectangles in $ \mathbb R ^{n}$.  

\noindent$\bullet$ {Dyadic version of paraproduct:}

For a rectangle $R = R_1\times \cdots \times R_n\in\mathcal R$ 
and for a choice of $\varepsilon=(\varepsilon_1,\ldots,\varepsilon_n)\in\{0, 1\}^n$, 
\begin{equation}
\label{e.Haar-nd-dim}
 h_R^{\varepsilon }(x_1,\ldots,x_n) \eqdef{}\prod_{j=1}^n h _{R_j}^{\varepsilon_j}(x_j),
\end{equation}
where $h_{R_j}^{\varepsilon_j}$ is given by either \eqref{e.Haar} or \eqref{e.zq} depending on if $\varepsilon_j=0$ or $\varepsilon_j=1$.
For $ \varepsilon =\vec 0= (0,\dotsc,0)$, we  
denote $ h_R^{\vec0 }(x_1,\ldots,x_n)$ by  $h_R(x_1,\ldots,x_n)$ for simplicity.

Set $E_n\eqdef{}\{\varepsilon\in\{0,1\}^n\}\setminus\vec{1}$,  where $\vec{1}=(1,\ldots,1)$. Note that the cardinality of the set $E_n$ is $2^n-1$.  We then have that $\{h_R^\varepsilon:R\in\mathcal R,\varepsilon\in E_n\}$ is the product Haar basis for $L^2(\mathbb R^n)$. 

The $n$ parameter Haar paraproducts are 
\begin{equation}
\label{e.dparameterHaar}
\operatorname B _{\textup{Haar}} (f_1,f_2) \eqdef \sum _{R\in \mathcal R } \frac {\ip f_1, h_R,} {\sqrt {\abs{ R}}}  \ip f_2, h ^{\varepsilon }_R, _{L^2(\mathbb R^n)}\, h _{R} ^{\delta }.
\end{equation}
The essential restriction to place on the two choices of $ \varepsilon ,\delta \in \{0,1\} ^{n}$ is that 
$$
\emph {in no coordinate  $ j$ do we have $ \varepsilon _{j}=\delta _{j}=1$.} 
$$
 Observe that this condition permits a wide variety of paraproducts, most with most having no proper analog as compared to the one dimensional case. 
 
\noindent$\bullet$ {Continuous version of paraproduct:}

Let us say that a collection $\{\varphi_R: R\in\mathcal R\} $  \emph {is adapted to a rectangle $R=R _{1}\times \cdots\times R _{n}$} if and only if $\varphi_R (x_1,\ldots, x_n)=\prod_{j=1}^n\varphi_{j,R_j}(x_j)$, with $R=R_1\times\cdots\times R_n $ and each $\{\varphi_{j, R_j }\}$ is adapted to $\mathcal D$.  We say that $\{\varphi_R \} $ has \emph {zeros in the $j $th coordinate} if and only if $\{\varphi_{j,R_j} \} $ has zeros.  The collection $\{\varphi_R\}$ is \emph{uniformly adapted to $\mathcal R$} if and only if each $\{\varphi_{j, R_j}\}$, $j=1,\ldots,n$, is uniformly adapted to $\mathcal D$. 

Paraproducts  are then defined as follows: 
\begin{equation}\label{mul paraproduct}
\operatorname B(f _{1},f _{2})\eqdef{}\sum _{R\in\mathcal R} \frac{\varphi_{R}^{(3)}}{\sqrt{\abs{R} }}\prod _{j=1}^{2} \ip f _{j},\varphi_{R}^{(j)},_{L^2(\mathbb R^n)},
 \end{equation}
where the functions $\varphi_{R}^{(j)}$  are adapted to $R$ for $j=1,2,3 $. 
The construction of a smooth wavelet basis in $L^2(\mathbb R^n)$ is similar and standard. For the details we omit here.

The boundedness of the multi-parameter paraproducts was first studied by Journ\'e when considering the $T(1)$ Theorem in the product setting, \cite{MR88d:42028}.  These results were later studied further by Muscalu, Pipher, Tao and Thiele in the following papers \cites{camil, math.CA/0411607} which showed the richness of the paraproduct structures in the multiparameter setting.  One should also see the article by Lacey and Metcalfe, \cite{math.CA/0502334}.  The following general theorem on the boundedness in $L^2(\mathbb R^n)$ of the Haar paraproducts and more general paraproducts from a wavelet basis is then given by:
\begin{theorem}[\cites{MR88d:42028,camil, math.CA/0411607, math.CA/0502334}]
For the multi-parameter Haar paraproducts, we have the following estimate:
\label{t.2}
\begin{equation}
\label{t.dyadicnparameter} 
\norm \operatorname B _{\textup{Haar}} (f_1,\cdot).L^2(\mathbb R^n)\to L^2(\mathbb R^n). \lesssim \norm f_1. {\rm BMO} _{\textup{dyadic}}(\mathbb R\times\cdots\times\mathbb R). .
\end{equation}

\noindent More generally, assume that for both coordinates $j=1,2 $ there is a choice of $k\in\{2,3\} $ for which $\varphi_{R}^{(1)}$ and  $\varphi_{R}^{(k)} $ zeros in the $j$th coordinate. Then, for the paraproducts as in \eqref{mul paraproduct}, we have the inequality: 
\begin{equation}  
\label{e.B-2-}
\norm \operatorname B(f_1,\cdot).L^2(\mathbb R^n)\to L^2(\mathbb R^n). {}\lesssim{}\norm f_1.{\rm BMO}(\mathbb R\times\cdots\times\mathbb R). .
\end{equation}
Here ${\rm BMO} (\mathbb R\times\cdots\times\mathbb R)$ is the product {\rm BMO} space studied by  S.-Y. A. ~Chang and R.~Fefferman, \cite{cf2}. 
\end{theorem}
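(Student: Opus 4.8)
The plan is to establish the $L^2(\mathbb R^n)$ bound by duality, reducing to a trilinear form, and then to exploit the per-coordinate cancellation hypothesis, with the genuinely multi-parameter content entering through the product Carleson embedding theorem. I would carry out the Haar case first and then reduce the continuous case to it. Testing $\operatorname{B}_{\textup{Haar}}(f_1,f_2)$ against an arbitrary $g\in L^2(\mathbb R^n)$ and expanding yields the trilinear form
\[
\Lambda(f_1,f_2,g)=\sum_{R\in\mathcal R}\frac{\langle f_1,h_R\rangle}{\sqrt{\abs{R}}}\,\langle f_2,h_R^{\varepsilon}\rangle\,\langle g,h_R^{\delta}\rangle ,
\]
and it suffices to prove $\abs{\Lambda(f_1,f_2,g)}\lesssim \norm f_1.{\rm BMO}.\,\norm f_2.2.\,\norm g.2.$, uniformly over the admissible pairs $(\varepsilon,\delta)$. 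Writing $a_R\eqdef\abs{R}^{-1/2}\langle f_1,h_R\rangle$, the task is to show that the sequence $\{a_R\}$ acts boundedly when paired with the two $L^2$ factors.

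Next I would use the structural hypothesis coordinate by coordinate. Since $h_R=h_R^{\vec 0}$ is cancellative in every coordinate and the condition forbids $\varepsilon_j=\delta_j=1$, in each coordinate $j$ at least two of the three factors $h_R,h_R^{\varepsilon},h_R^{\delta}$ carry a zero. In the coordinates where $\delta_j=1$ (forcing $\varepsilon_j=0$) the pairing $\langle g,h_R^{\delta}\rangle$ reduces, up to normalization, to a local average of $g$ over $R$ in those directions, and symmetrically for $\langle f_2,h_R^{\varepsilon}\rangle$ in the coordinates where $\varepsilon_j=1$. After a Cauchy--Schwarz splitting, the purely cancellative coordinates are controlled by Bessel's inequality, while the father-wavelet coordinates contribute local averages dominated by the strong maximal function. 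This isolates the essential quantity, namely a square sum of $\{a_R\}$ weighted by averages of an $L^2$ function.

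The heart of the matter is then the product Carleson embedding theorem. The hypothesis $f_1\in{\rm BMO}(\mathbb R\times\cdots\times\mathbb R)$ in the sense of Chang--Fefferman means precisely that $\{\langle f_1,h_R\rangle\}$ is a product Carleson sequence,
\[
\sum_{R\subset\Omega}\abs{\langle f_1,h_R\rangle}^2\lesssim\abs{\Omega}\,\norm f_1.{\rm BMO}.^{2}\qquad\text{for every open set }\Omega\subset\mathbb R^n,
\]
and the embedding theorem converts this into the bound on the weighted square sum produced above, closing the estimate. The main obstacle is exactly this embedding. In contrast to the one-parameter case---where a Carleson condition over dyadic intervals together with the classical Carleson embedding theorem suffices---here the condition must be tested over \emph{arbitrary} open sets, since rectangular Carleson conditions are strictly too weak in the multi-parameter setting. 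The correct embedding rests on the Journ\'e covering lemma, which governs the overlap of the maximal dyadic rectangles contained in $\Omega$, together with the Chang--Fefferman theory of product $H^1$ and ${\rm BMO}$; assembling these is where the real work lies. Equivalently, one may phrase the argument through product $H^1$--${\rm BMO}$ duality, which again reduces to the same square-function estimates.

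Finally, to pass from the Haar paraproducts to the continuous paraproducts \eqref{mul paraproduct}, I would use the adaptedness estimate \eqref{e.adapted}, which supplies rapid off-diagonal decay in each coordinate. A standard almost-orthogonality (Schur- or Cotlar-type) argument then represents the smooth wavelet paraproduct as a rapidly convergent superposition of Haar-type models over finitely many shifted dyadic grids, each handled as above, with the dyadic product ${\rm BMO}$ replaced by its continuous counterpart. The independence of the ${\rm BMO}$ norm from the choice of uniformly adapted wavelets, already used in the one-parameter definitions, then transfers the bound to the continuous symbol and completes the proof.
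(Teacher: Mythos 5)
The paper does not actually prove this theorem: it is quoted as known background, with the proof deferred to the cited works of Journ\'e, Muscalu--Pipher--Tao--Thiele, and Lacey--Metcalfe, so there is no internal argument to compare yours against. Your outline is a faithful summary of how those references proceed (dualize to a trilinear form, exploit the per-coordinate cancellation pattern via a Cauchy--Schwarz splitting into square and maximal pieces, and absorb the symbol through the product Carleson condition), and the reduction of the continuous case to Haar models by almost orthogonality is likewise standard. Two caveats on where your sketch is thinnest. First, the step you compress into one sentence --- the splitting in the mixed case, e.g.\ $\varepsilon=(1,0)$, $\delta=(0,1)$, where neither $f_2$ nor $g$ is cancellative in all coordinates --- is where the genuinely multi-parameter difficulty lives: one needs $L^2$ bounds for \emph{hybrid} operators that act as a square function in some coordinates and a maximal function in others, and the symbol cannot be detached by a na\"ive rectangle-wise Carleson estimate precisely because, as you note, the Chang--Fefferman condition must be tested on arbitrary open sets; this is the content of the product Carleson embedding you invoke rather than a consequence of Bessel plus the strong maximal function alone. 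Second, you route the embedding through the Journ\'e covering lemma; that is Journ\'e's original path, but the later references you are implicitly following (Muscalu--Pipher--Tao--Thiele, Lacey--Metcalfe) deliberately avoid it via stopping times and a John--Nirenberg inequality for product ${\rm BMO}$, which is what makes the argument extend cleanly to $n\ge 3$ parameters. Neither point is an error, but both are exactly the places where, in your own words, the real work lies.
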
  

There are two points to make about this last inequality.  The first is that the  BMO norm is given explicitly by
 \begin{equation}  \label{e.bmocf}
 \norm f.{\rm BMO} (\mathbb R\times\cdots\times\mathbb R). {}\eqdef{}\sup_U \left[ \abs U ^{-1 }\sum _{R\subset U} \abs{ \ip f, w_R , }^2 \right] ^{1/2 }.
 \end{equation}
 Here, the supremum is formed over \textit{open} sets $U$ and $\{w_R\} $ is a product wavelet basis.  Replacing the wavelet basis by the Haar basis, we have dyadic Chang--Fefferman BMO, \cite{cf2}.  The second is that we are not asserting the equivalence of norms. Indeed, for a `degenerate' $n$ parameter paraproducts, the equivalence of norms is not so clear.   There are essentially two distinct cases.  The first case, with the greatest similarity to the one parameter case, is where we have for example, $\{ \varphi_{R}^{(2)} \}$ has  zeros in all coordinates.  The second case with no proper analog in the one variable setting is,  for instance, $\{ \varphi_{R}^{(2)}\} $ has zeros in one set of coordinates while $\{\varphi_{R}^{(3)}\} $ has zeros in a complimentary set of coordinates. 

Similar to  the one-parameter case, with the Haar basis we can define the dyadic (product) Besov spaces in $\mathbb R^n$ as
\begin{equation}  \label{e.besov-nd-dyadic}
 \norm f.\besov p _{\text{\rm dyadic}}(\mathbb R\times\cdots\times\mathbb R). {}\eqdef{}\left[ \sum _{R\in\mathcal R} \left[ \abs R ^{-1/2}\abs{\ip f,h_R, } \right] ^p \right] ^{1/p },
 \qquad 0<p<\infty.
 \end{equation}
For the tensor product wavelet basis, we define the (product) Besov spaces in $\mathbb R^n$ as
 \begin{equation}  
\label{e.besov-nd-def}
 \norm f.\besov p(\mathbb R\times\cdots\times\mathbb R). {}\eqdef{}\left[ \sum _{R\in\mathcal R} \left[ \abs R^{-1/2}\abs{\ip f,\varphi_R , } \right] ^p \right] ^{1/p },
 \qquad 0<p<\infty.
 \end{equation}
 Again, we will see that this definition does not depend upon the choice of wavelet basis.

Our principal estimate, giving an extension of Theorem \ref{t.2} to the multi-parameter setting is given next. For simplicity, this theorem is stated in the case of two parameters.  The correct statement of the general multi-parameter version can be obtained from Theorem \ref{t.Haardparproducts} below.
\begin{theorem}[Main Result 2]
\label{t.two-schatten}
Assume  that $\{\phi_{1,R }\} $ has cancellation for both coordinates $j=1,2$, while $\{\phi_{2,R }\} $ and $\{\phi_{3,R }\} $ have the property that: if $\{\phi_{2,R }\} $ has a zero in coordinate $j$ then $\{\phi_{3,R }\} $ does not and vice versa, for $j=1,2$. Then, 
\begin{equation}  
\label{e.two-schatten}
\norm \operatorname B(f_1,\cdot) . \schatten p. {}\lesssim{} \norm f_1.\besov p(\mathbb R\times\cdots\times\mathbb R) . ,\qquad 0<p<\infty.
\end{equation}
If all the collections $\{\phi _{j,R}\} $ are uniformly adapted to $\mathcal R$, then the reverse inequality holds. 
 \end{theorem}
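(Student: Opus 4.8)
The plan is to realize the map $f_2\mapsto \operatorname B(f_1,f_2)$ as a sum of rank-one operators indexed by $\mathcal R$ and to reduce the theorem to a single coefficient estimate, in the spirit of the nearly weakly orthonormal (NWO) machinery of Rochberg--Semmes \cite{RochSem} that already underlies the one-parameter Theorem \ref{t.schatten-one}. Writing $\varphi^{(j)}_R$ for the factors $\phi_{j,R}$ of the statement and $a_R\eqdef \abs R^{-1/2}\langle f_1,\varphi^{(1)}_R\rangle$, the operator in \eqref{mul paraproduct} is exactly $T\eqdef\sum_{R\in\mathcal R} a_R\,\varphi^{(3)}_R\otimes\varphi^{(2)}_R$, and by the definition \eqref{e.besov-nd-def} one has $\sum_R\abs{a_R}^p=\|f_1\|_{\besov p(\mathbb R\times\cdots\times\mathbb R)}^p$. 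Thus the theorem is equivalent to the bound
\[
\Bigl\| \sum_{R\in\mathcal R} a_R\,\varphi^{(3)}_R\otimes\varphi^{(2)}_R \Bigr\|_{\schatten p}
\lesssim \Bigl( \sum_{R\in\mathcal R}\abs{a_R}^p \Bigr)^{1/p},
\qquad 0<p<\infty,
\]
together with its reverse when both families are uniformly adapted to $\mathcal R$ (hence an orthonormal wavelet basis after normalization in the Haar case).

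The engine is a product version of the NWO estimate: if the families entering $T$ obey a product off-diagonal Gram decay
\[
\abs{\langle \varphi^{(i)}_R,\varphi^{(i)}_{R'}\rangle}\lesssim \prod_{k=1}^{2}\Delta_k(R_k,R'_k),\qquad i=2,3,
\]
with each $\Delta_k$ a one-parameter almost-orthogonality factor, then $\|T\|_{\schatten p}\lesssim\|a\|_{\ell^p}$ for every $0<p<\infty$ through a stopping-time decomposition of the coefficient sequence. I expect the \emph{main obstacle} to be establishing the relevant decay under the complementary-cancellation hypothesis, because neither $\{\varphi^{(2)}_R\}$ nor $\{\varphi^{(3)}_R\}$ need be NWO in the multi-parameter sense on its own: in the degenerate (complementary-split) case a factor such as $\varphi^{(2)}_{j,R_j}$ is a non-cancellative father wavelet, and the product Carleson/strong-maximal theory of Chang--Fefferman \cite{cf2} does not apply to a fully non-cancellative product family. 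The hypothesis (in the presence of boundedness, Theorem \ref{t.2}) guarantees that in each coordinate $j$ at least one, and by the complementary condition exactly one, of $\varphi^{(2)}_{j,R_j},\varphi^{(3)}_{j,R_j}$ carries a zero. The estimate must therefore be run for the \emph{pair}, on the combined kernel: exploiting $\varphi^{(i)}_R=\prod_j\varphi^{(i)}_{j,R_j}$ one factors the relevant inner products coordinatewise and, in each coordinate, invokes the one-parameter cancellation/decay estimate on whichever of the two factors is cancellative. The two genuinely different configurations flagged after Theorem \ref{t.2}---one factor cancelling in all coordinates versus a complementary split---are then handled uniformly by this per-coordinate bookkeeping.

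For the Schatten bound itself I would anchor the range of $p$ at two explicit estimates and let the decomposition (or interpolation) fill in between. The range $0<p\le 1$ is immediate and uses no orthogonality: each summand is rank one with $\|a_R\,\varphi^{(3)}_R\otimes\varphi^{(2)}_R\|_{\schatten p}=\abs{a_R}$, so the quasi-triangle inequality gives $\|T\|_{\schatten p}^p\le\sum_R\abs{a_R}^p$. The case $p=2$ is the Hilbert--Schmidt identity
\[
\|T\|_{\schatten 2}^2
=\sum_{R,R'\in\mathcal R} a_R\,\overline{a_{R'}}\,
\langle\varphi^{(3)}_R,\varphi^{(3)}_{R'}\rangle\,
\overline{\langle\varphi^{(2)}_R,\varphi^{(2)}_{R'}\rangle},
\]
whose summand factors over coordinates and, by the combined cancellation just described, carries full product decay in each coordinate; a Schur test then controls it by $\sum_R\abs{a_R}^2$. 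The product NWO decomposition yields the full range $0<p<\infty$ at once; alternatively, complex interpolation of the linear map $a\mapsto T$ between its $\schatten 1$ and $\schatten 2$ bounds recovers $1\le p\le 2$, and $p>2$ follows from the decomposition (equivalently from the dual form of the Schatten Hausdorff--Young inequality).

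For the reverse inequality, when all three families are uniformly adapted to $\mathcal R$ I would test $T$ against the two families: the diagonal matrix entries $\langle T\varphi^{(2)}_R,\varphi^{(3)}_R\rangle$ reproduce $a_R$ up to the off-diagonal Gram corrections already shown to be summable, and the lower bound $\|T\|_{\schatten p}\gtrsim\bigl(\sum_R\abs{\langle T\varphi^{(2)}_R,\varphi^{(3)}_R\rangle}^p\bigr)^{1/p}$---valid directly for $p\ge 2$ via the matrix-entry (Schatten Hausdorff--Young) inequality and by duality together with the NWO framework for $0<p<2$---recovers $\|f_1\|_{\besov p(\mathbb R\times\cdots\times\mathbb R)}$. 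Finally, the general $n$-parameter statement of Theorem \ref{t.Haardparproducts} follows from the identical coordinatewise accounting, the two-parameter case being notationally representative.
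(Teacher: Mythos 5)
Your reformulation (reduce everything to $\norm \sum_{R\in\mathcal R} a_R\,\varphi^{(3)}_R\otimes\varphi^{(2)}_R.\schatten p. \lesssim \norm a.\ell^p.$ with $a_R=\abs R^{-1/2}\ip f_1,\varphi^{(1)}_R,$) is exactly the paper's Theorem \ref{t.two-withoutbesov}, and your handling of $p\le 2$ is sound: $0<p\le 1$ is indeed free from the quasi-triangle inequality \eqref{e.schattentrianglep} applied to rank-one summands, $p=2$ is a Hilbert--Schmidt/Schur computation using the per-coordinate combined cancellation, and interpolation fills in $1\le p\le 2$. The genuine gap is the range $2<p<\infty$, which is where essentially all of the paper's work lies. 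Both routes you offer there fail as stated: the entrywise bound $\norm T.\schatten p.\le \bigl(\sum_{i,j}\abs{t_{ij}}^p\bigr)^{1/p}$ holds only for $p\le 2$ --- for $p\ge 2$ the inequality reverses (compare \eqref{e.sch-sup}), so no form of the Schatten Hausdorff--Young inequality can supply an \emph{upper} bound --- and interpolation cannot reach $p>2$ because the would-be endpoint $\ell^\infty\to\mathcal L(L^2)$ is false (this is precisely the failure of boundedness without a Carleson condition remarked after the theorem). What is needed, and what your sketch never specifies, is the concrete operator decomposition: the paper expands each non-cancellative factor over strictly larger scales via \eqref{e.expandindicator}, groups by the scale shift $m=(m_1,m_2)$ into $\operatorname A_m=\sum_S \operatorname A_{m,S}$ whose blocks \eqref{A m S} interleave the coordinates of the two tensor factors (namely $h^0_{S_1}(x_1)h^0_{R_2}(x_2)\otimes h^0_{R_1}(y_1)h^0_{S_2}(y_2)$ in the complementary-split case), applies the finite-matrix estimate of Proposition \ref{p.mn} with its dimensional loss $2^{\delta(p)(m_1+m_2)}$, and beats that loss with the coefficient gain $2^{-(m_1+m_2)/2}$ since $\tfrac12-\delta(p)=\min(\tfrac12,\tfrac1p)>0$. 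A ``stopping-time decomposition of the coefficient sequence'' is not a substitute; the decomposition that does the work is of the \emph{operator} by scale, into blocks of controlled rank.

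The lower bound has a second gap. The inequality $\norm T.\schatten p.\gtrsim\bigl(\sum_R\abs{\ip T\varphi^{(2)}_R,\varphi^{(3)}_R,}^p\bigr)^{1/p}$ for $p\ge2$ requires the test families to be orthonormal (or at least NWO), and in the complementary-split case the families containing a father-wavelet factor are neither --- as you yourself observe. Moreover, ``off-diagonal Gram corrections are summable'' is not enough: you need them \emph{small relative to the diagonal term}, and without further structure they are merely comparable to it. The paper's device is to pass to subcollections with scales separated by a large parameter $\ell$ (retaining at least an $\ell^{-2}$ fraction of the $\ell^p$ mass of $\alpha$), compress $T_\alpha$ by the associated projections using Proposition \ref{p.ortholower}, observe that the resulting main term $\operatorname T^{(1,1)}_\alpha$ is in exact singular-value form, and show the remainder is $O(2^{-c\ell})$, so that a fixed large $\ell$ wins. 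Some version of this scale-separation and absorption argument is indispensable and is missing from your proposal.
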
 
 
Notice that  when neither collection of functions have zeros, the corresponding operator is \emph{not} bounded for general functions in $BMO $.  This is indicative of the well known fact that the result on Schatten norms is not as delicate as the criteria for being bounded.

Parallel to the above result, the dyadic version of Theorem \ref{t.two-schatten} and extension of Theorem \ref{c.schatten-one} is as follows.

%%%%%%%%%%%%%%%%%%%%%%%%%%%%%% THEOREM THEOREM THEOREM
\begin{theorem}
\label{t.dyadicMain} 
For any bounded $n$ parameter dyadic paraproduct  we have the equivalence 
\begin{equation}\label{e.dyadicMain}
\norm \operatorname B _{\textup{Haar}} (f_1,\cdot). \schatten p. \simeq \norm f_1.\besov p_{\text{\rm dyadic}}(\mathbb R\times\cdots\times\mathbb R). ,\qquad 0<p<\infty.
\end{equation}
\end{theorem}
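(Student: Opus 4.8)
The plan is to reduce everything to a statement about a single operator and then run the near-orthonormal machinery in two directions. Fix admissible exponents $\varepsilon,\delta\in\{0,1\}^n$, so that no coordinate $j$ has $\varepsilon_j=\delta_j=1$, and abbreviate $\lambda_R\eqdef \abs{R}^{-1/2}\ip f_1,h_R,$, so that $\norm f_1.\besov p_{\text{\rm dyadic}}(\mathbb R\times\cdots\times\mathbb R). = \bigl(\sum_{R\in\mathcal R}\abs{\lambda_R}^p\bigr)^{1/p}$. The operator is a sum of rank-one operators,
\[
T\eqdef\operatorname B_{\textup{Haar}}(f_1,\cdot)=\sum_{R\in\mathcal R}\lambda_R\, h_R^{\delta}\otimes h_R^{\varepsilon},\qquad (h_R^{\delta}\otimes h_R^{\varepsilon})g=\ip g,h_R^{\varepsilon}, h_R^{\delta},
\]
so the theorem amounts to the two-sided bound $\norm T.\schatten p.\simeq\bigl(\sum_R\abs{\lambda_R}^p\bigr)^{1/p}$. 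When $\varepsilon=\delta=\vec0$ both systems are orthonormal, $T$ is literally diagonal, and the singular values are exactly $\{\abs{\lambda_R}\}$; the real content is the configurations with no one-parameter analogue, where father wavelets spoil orthonormality on the $\varepsilon$-side in some coordinates and on the $\delta$-side in complementary coordinates.

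The machinery I would invoke is the nearly weakly orthonormal (NWO) framework of Rochberg--Semmes \cite{RochSem}. Each of $\{h_R^{\varepsilon}\}_{R}$ and $\{h_R^{\delta}\}_{R}$ is a coordinatewise tensor product of one-dimensional Haar functions (orthonormal, hence NWO) and father wavelets $\abs I^{-1/2}\mathbf 1_I$ (the model NWO sequence). First I would record that the NWO property tensorizes, since the $n$-parameter Carleson embedding behind it factors over the coordinates; thus both systems are NWO. The black box, which is the multi-parameter dyadic instance of what already underlies Theorem~\ref{c.schatten-one}, is that for NWO families $\{e_R\},\{f_R\}$ and every $0<p<\infty$,
\[
\Bigl\|\sum_{R}a_R\,e_R\otimes f_R\Bigr\|_{\schatten p}\lesssim\Bigl(\sum_R\abs{a_R}^p\Bigr)^{1/p}\qquad\text{and}\qquad\Bigl(\sum_R\abs{\ip Te_R,f_R,}^p\Bigr)^{1/p}\lesssim\norm T.\schatten p..
\]

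The upper bound $\norm T.\schatten p.\lesssim(\sum_R\abs{\lambda_R}^p)^{1/p}$ is then immediate from the first estimate with $e_R=h_R^{\delta}$, $f_R=h_R^{\varepsilon}$, $a_R=\lambda_R$. For the lower bound the decisive dyadic point is that admissibility makes the diagonal extraction \emph{exact}. I would compute
\[
\ip Th_R^{\varepsilon},h_R^{\delta}, = \sum_{R'\in\mathcal R}\lambda_{R'}\ip h_R^{\varepsilon},h_{R'}^{\varepsilon},\,\ip h_{R'}^{\delta},h_R^{\delta},,
\]
and show that every summand with $R'\neq R$ vanishes: a nonzero factor $\ip h_R^{\varepsilon},h_{R'}^{\varepsilon},$ forces $R_j=R'_j$ in each coordinate with $\varepsilon_j=0$ (distinct one-dimensional Haar functions are orthogonal), while in each coordinate with $\varepsilon_j=1$ one has $\delta_j=0$, so a nonzero factor $\ip h_{R'}^{\delta},h_R^{\delta},$ forces $R_j=R'_j$ there. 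Hence $R=R'$ in all coordinates, so $\ip Th_R^{\varepsilon},h_R^{\delta},=\lambda_R$ with no contamination, and the second estimate yields $(\sum_R\abs{\lambda_R}^p)^{1/p}\lesssim\norm T.\schatten p.$, completing the equivalence. This exact identity is essentially the dyadic refinement of the diagonalisation used by Pott--Smith \cite{MR2097606}.

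The hard part will be the NWO--Schatten black box across the entire range $0<p<\infty$, not the paraproduct bookkeeping. For $p\ge 1$ the two displayed estimates are dual to each other through $\schatten p$--$\schatten{p'}$ duality and follow from the Carleson/tree structure; but in the quasi-Banach range $0<p<1$ this duality collapses and the second (diagonal) estimate must be proved directly, since for $p<1$ the domination of a diagonal by the singular values is not a consequence of convex majorization. I expect this quasi-Banach regime, together with checking that the one-parameter NWO estimate tensorizes uniformly into the $n$-parameter Carleson embedding, to be where the work concentrates; by contrast, the exact diagonal identity above shows that once the machinery is in place the dyadic multi-parameter case is genuinely cleaner than its continuous counterpart in Theorem~\ref{t.two-schatten}.
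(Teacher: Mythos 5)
Your reduction, your upper bound, and your observation that admissibility makes the diagonal extraction exact (every off-diagonal term $\langle h_R^{\varepsilon},h_{R'}^{\varepsilon}\rangle\langle h_{R'}^{\delta},h_R^{\delta}\rangle$ vanishes because each coordinate carries a genuine Haar factor on one side or the other) are correct, and for $1\le p<\infty$ the argument can be completed along the lines you describe. The paper, however, proceeds more elementarily and never invokes the NWO black box: it expands each father factor via $h_I^1=\sum_{J\supsetneq I}\nu_{I,J}\sqrt{|I|/|J|}\,h_J^0$ into genuine Haar functions, groups the resulting terms by relative scale $m$ into operators $A_m$ whose building blocks $H_{m,S}$ are orthogonal (or are controlled by Proposition~\ref{p.mn}), obtains $\lVert A_m\rVert_{\schatten p}\lesssim 2^{\delta(p)\sum_j m_j}\lVert\alpha\rVert_{\ell^p}$, and sums the geometric series $\sum_m 2^{-\frac12\sum_j m_j}\,2^{\delta(p)\sum_j m_j}$ using the triangle inequality for $p\ge1$ and $p$-subadditivity for $p<1$. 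Its lower bound separates scales by $\ell$, compresses by Haar projections via Proposition~\ref{p.ortholower}, and exhibits a main term in exact singular-value form plus an $O(2^{-c\ell})$ remainder.

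The genuine gap in your proposal is the lower bound for $0<p<1$, and it is not of the ``remains to be checked'' variety: the diagonal estimate $\bigl(\sum_R|\langle Te_R,f_R\rangle|^p\bigr)^{1/p}\lesssim\lVert T\rVert_{\schatten p}$ is \emph{false} in that range even for orthonormal families. Already for a rank-one operator $T=u\otimes v$ one has $\lVert T\rVert_{\schatten p}=1$, while
\begin{equation*}
\sum_n\bigl|\langle Te_n,f_n\rangle\bigr|^p=\sum_n\bigl|\langle e_n,v\rangle\bigr|^p\bigl|\langle u,f_n\rangle\bigr|^p
\end{equation*}
can diverge, since Bessel's inequality only places the coefficient sequences in $\ell^2$ and $2p<2$. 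So ``proving the second estimate directly'' in the quasi-Banach range is not an option; no NWO hypothesis can rescue a statement that fails for orthonormal bases. What actually saves the theorem for $p<1$ is a perturbation argument specific to the operator at hand: restrict to a subfamily of rectangles with scales separated by $\ell$ in each coordinate, capturing at least a fixed fraction $\gtrsim\ell^{-2n}$ of the $\ell^p$ mass of $\alpha$; compress $T$ by the corresponding Haar projections; observe that the compression equals an operator in exact singular-value form, whose $\schatten p$ norm is read off from \eqref{e.schatten-def}, plus a remainder whose $\schatten p$ quasi-norm is $O(2^{-c\ell})$ by the same scale-decay estimates used in the upper bound; then choose $\ell$ large. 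You need this (or some other genuinely quasi-Banach mechanism) to cover $0<p<1$; as written, your proposal establishes the equivalence only for $1\le p<\infty$.
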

%%%%%%%%%%%%%%%%%%%%%%%%%%%%%% THEOREM THEOREM THEOREM
 
Results similar to Theorem \ref{t.dyadicMain} above have appeared in the work of S. Pott and M. Smith, see \cite{MR2097606}.  
One of the aims for this paper is to provide the continuous version (i.e., Theorem \ref{t.two-schatten}), and provide a direct application of the paraproducts to the commutator of the multi-parameter Hilbert transforms, which links to the big Hankel operator \cite{CS}. For the sake of notational simplicity, we state and prove the result in the bi-parameter setting. The argument for multi-parameter setting follows similarly.

In Section~\ref{s.schatten} we collect some properties of Schatten norms. In Section~\ref{s.besov} we show that the Besov norms are independent of the choice of wavelet basis and establish the dyadic structure for the Besov spaces.    In Section \ref{s.one-dimensional} we give a proof of Theorems \ref{t.schatten-one} and \ref{c.schatten-one}.  We first will handle the case of Haar paraproducts since that will turn out to be a model for the more general case of paraproducts built from wavelet bases. In Section \ref{s.two-parameter} we give the proofs of Theorems \ref{t.two-schatten} and \ref{t.dyadicMain}, which are based upon the ideas appearing in Section \ref{s.one-dimensional} but will be complicated by additional notation necessary to handle the multi-parameter paraproducts.

 \section{Basic Properties of Schatten Norms}
  \label{s.schatten}

 Let $\mathcal H $ be a separable Hilbert space.   Recall that for elements $\varphi, \phi\in\mathcal H$ the operator denoted by $\varphi\otimes\phi$ takes an $f\in\mathcal H$ to $\phi\ip f,\varphi,_{\mathcal H} $.

 A compact operator $T \mid \mathcal H\mapsto\mathcal H $ has a decomposition 
 \begin{equation}  \label{e.singular value }
 T=\sum _n \lambda_n\, \operatorname e_n \otimes \operatorname f_n 
 \end{equation}
 in which $\lambda_n \in \mathbb R $, and $\{\operatorname e_n\} $ and $\{\operatorname f_n\} $ are 
 orthonormal sequences in $\mathcal H $ (compactness implies that $\abs{\lambda_n}\to0 $).
 The Schatten norm is 
 then 
 \begin{equation}  \label{e.schatten-def}
 \norm \operatorname T. \schatten p . {}\eqdef{}\left[ \sum _n\abs{\lambda_n }^p\right] ^{1/p},\qquad 0<p<\infty. 
 \end{equation}
 This is an actual norm for $1\le{}p<\infty $, while for $0<p<1 $ it is not.   The trace class operators are the class $\mathbb S_1 $ and the Hilbert--Schmidt operators are the class $\mathbb{S}_2$.  Part of the interest in these classes are that the class $\mathbb S_1 $ is in natural duality with $\mathcal L(\mathcal H) $, the space of bounded operators on $\mathcal H $ and the class $\mathbb{S}_2$ has a simple way to compute the norm using any orthonormal basis..  
It is clear that $\norm \operatorname T.\schatten p.=\norm \operatorname T^*.\schatten p. $.

 Define a collection of positive numbers 
  \begin{equation*}
  \mathcal T {}\eqdef{}\left\{ \left[ \sum_n \norm \operatorname  T \operatorname e_n .\mathcal{H}.^p \right]^{1/p} \mid \text{ $\{\operatorname e_n \}  $ is an orthonormal basis in $\mathcal H $ }\right\} . 
  \end{equation*}
  Then it is the case that 
  \begin{align}  \label{e.sch-inf} 
  \norm \operatorname  T.\schatten p .&{}=\inf\mathcal T,\qquad 0<p\le2,
  \\
 \label{e.sch-sup} 
  \norm \operatorname T.\schatten p .&{}=\sup\mathcal T,\qquad 2\le p<\infty.
  \end{align}

  For $1\le p<\infty $, the Schatten norm obeys the triangle inequality: 
 \begin{equation}  \label{e.schattentriangle}
 \norm \operatorname S+\operatorname T.\schatten p.\le{}\norm \operatorname S.\schatten p.+{} 
 		\norm \operatorname T.\schatten p..
 \end{equation}
 For $0<p<1 $ this is no longer the case.  There is the following quasi-triangle inequality, linked to the subadditivity of 
 $x\mapsto x^p $. 
 \begin{equation}  \label{e.schattentrianglep}
 \norm \operatorname S+\operatorname T.\schatten p.^p\le{}\norm \operatorname S.\schatten p.^p+{} 
 		\norm \operatorname T.\schatten p.^p.
 \end{equation}
In the converse direction, there is a proposition below.

\begin{proposition}
\label{p.ortholower}
Suppose that $\operatorname T $ is an operator from $\mathcal H $ to itself, and that $\operatorname P $ is a contraction, then 
\begin{equation*}
\norm \operatorname T \operatorname P .\schatten p.,\norm \operatorname P \operatorname T .\schatten p.{}\le{} \norm \operatorname T.\schatten p.,\qquad 0<p<\infty. 
\end{equation*}
\end{proposition}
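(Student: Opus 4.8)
The plan is to reduce everything to the pointwise estimate $\norm \operatorname P\operatorname T\operatorname e.\mathcal H.\le\norm \operatorname T\operatorname e.\mathcal H.$, which holds for every vector $\operatorname e\in\mathcal H$ precisely because $\operatorname P$ is a contraction, and then to feed this into the orthonormal-basis formulas \eqref{e.sch-inf}--\eqref{e.sch-sup}. We may assume $\norm \operatorname T.\schatten p.<\infty$, since otherwise there is nothing to prove; in particular $\operatorname T$ is then compact, hence so are $\operatorname P\operatorname T$ and $\operatorname T\operatorname P$, and \eqref{e.sch-inf}--\eqref{e.sch-sup} apply to all three operators.

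First I would treat the left factor, $\norm \operatorname P\operatorname T.\schatten p.\le\norm \operatorname T.\schatten p.$. Fix any orthonormal basis $\{\operatorname e_n\}$ of $\mathcal H$. Since $\operatorname P$ is a contraction, $\norm \operatorname P\operatorname T\operatorname e_n.\mathcal H.\le\norm \operatorname T\operatorname e_n.\mathcal H.$ for each $n$, and raising to the $p$-th power and summing gives
\[
\Bigl[\sum_n\norm \operatorname P\operatorname T\operatorname e_n.\mathcal H.^p\Bigr]^{1/p}\le\Bigl[\sum_n\norm \operatorname T\operatorname e_n.\mathcal H.^p\Bigr]^{1/p}
\]
for \emph{every} choice of basis $\{\operatorname e_n\}$. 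For $0<p\le2$ I would take the infimum over all orthonormal bases on both sides and invoke \eqref{e.sch-inf}; for $2\le p<\infty$ I would instead take the supremum and invoke \eqref{e.sch-sup}. Either way, the two basis-functionals obey a pointwise inequality over the \emph{same} indexing family of bases, so passing to $\inf$ (resp. $\sup$) preserves the inequality and yields $\norm \operatorname P\operatorname T.\schatten p.\le\norm \operatorname T.\schatten p.$.

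The right factor then follows by duality. Because $\operatorname P^*$ is again a contraction (its norm equals that of $\operatorname P$), the case just established applies to $\operatorname P^*\operatorname T^*$. Using $(\operatorname T\operatorname P)^*=\operatorname P^*\operatorname T^*$ together with the identity $\norm \operatorname S.\schatten p.=\norm \operatorname S^*.\schatten p.$ recorded above, I would conclude
\[
\norm \operatorname T\operatorname P.\schatten p.=\norm \operatorname P^*\operatorname T^*.\schatten p.\le\norm \operatorname T^*.\schatten p.=\norm \operatorname T.\schatten p..
\]

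The step to be most careful about is the $\inf$/$\sup$ passage in the first part: one must verify that the basis-functional for $\operatorname P\operatorname T$ is dominated by that for $\operatorname T$ on one common basis, so that the domination survives optimizing over the shared family of bases (infimum when $p\le2$, supremum when $p\ge2$). This is exactly why it is essential that the contraction acts on the left, \emph{after} $\operatorname T$; the right-hand factor cannot be handled directly in the same way, since the vectors $\operatorname P\operatorname e_n$ need not form an orthonormal system, and this is precisely what forces the detour through the adjoint.
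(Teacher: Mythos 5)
Your proof is correct and follows essentially the same route as the paper: the left-multiplication case $\norm \operatorname P\operatorname T.\schatten p.\le\norm \operatorname T.\schatten p.$ via the orthonormal-basis infimum/supremum characterizations \eqref{e.sch-inf}--\eqref{e.sch-sup}, and the right-multiplication case by passing to adjoints and using $\norm \operatorname S.\schatten p.=\norm \operatorname S^*.\schatten p.$. You have simply spelled out the details that the paper's one-line proof leaves implicit.
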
 
 
\begin{proof}  
For $\norm \operatorname P \operatorname T .\schatten p. $, this follows from the characterization of the Schatten norm in terms of either an infimum or supremum, see \eqref{e.sch-inf} and \eqref{e.sch-sup}.  Combining this observation with the equivalence of the Schatten norms for dual operators proves the proposition.
\end{proof}

 We also need  an inequality for the Schatten norms of a $m\times n $ matrix $\operatorname A=(a _{i,j }) $.  
 
\begin{proposition}
\label{p.mn}
We have the inequality 
\begin{equation}  
\label{e.mn}
\norm \operatorname A .\schatten p .\le{} (mn)^{\delta(p)}\left[\sum _{i,j=1}^{m,n} \abs{ a _{i,j } }^p\right]^{1/p},\qquad 0<p<\infty. 
\end{equation}
Here, $\delta(p)=\max\left(0,\frac12-\frac1p \right)$. 
\end{proposition}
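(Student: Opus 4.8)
The plan is to split the range of $p$ at $p=2$, matching the two branches of $\delta(p)=\max(0,\tfrac12-\tfrac1p)$. Throughout I regard $\operatorname A$ as the operator $\operatorname A g=\sum_{i,j}a_{i,j}\ip g,\operatorname f_j,\operatorname e_i$, where $\{\operatorname f_j\}_{j=1}^n$ and $\{\operatorname e_i\}_{i=1}^m$ are the standard orthonormal bases of the domain and range. With this convention $\operatorname A\operatorname f_k=\sum_i a_{i,k}\operatorname e_i$ is the $k$th column, so that $\norm \operatorname A\operatorname f_k.\mathcal H.^2=\sum_{i=1}^m\abs{a_{i,k}}^2$.

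For the range $0<p\le2$, where $\delta(p)=0$, I would invoke the infimum characterization \eqref{e.sch-inf}. Since its right-hand side is an infimum over all orthonormal bases, evaluating at the single basis $\{\operatorname f_k\}$ already produces an upper bound,
\[
\norm \operatorname A.\schatten p.\le \Bigl[\sum_{k=1}^n \norm \operatorname A\operatorname f_k.\mathcal H.^p\Bigr]^{1/p}=\Bigl[\sum_{k=1}^n\Bigl(\sum_{i=1}^m\abs{a_{i,k}}^2\Bigr)^{p/2}\Bigr]^{1/p}.
\]
Because $p/2\le1$, the map $t\mapsto t^{p/2}$ is subadditive on $[0,\infty)$, so $\bigl(\sum_i\abs{a_{i,k}}^2\bigr)^{p/2}\le\sum_i\abs{a_{i,k}}^p$; summing in $k$ yields the claim with constant $1=(mn)^{0}$.

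For the range $2\le p<\infty$ I would instead compare with the Hilbert--Schmidt class. The singular values obey the elementary sequence inequality $\norm \cdot.\ell^p.\le\norm \cdot.\ell^2.$ for $p\ge2$, whence $\norm \operatorname A.\schatten p.\le\norm \operatorname A.\schatten 2.$; and the Hilbert--Schmidt norm, computable in any orthonormal basis as noted in Section~\ref{s.schatten}, equals $\norm \operatorname A.\schatten 2.=\bigl[\sum_{i,j}\abs{a_{i,j}}^2\bigr]^{1/2}$. It then remains only to pass from the $\ell^2$ to the $\ell^p$ norm of the $mn$-entry array $(a_{i,j})$, which the finite-dimensional H\"older inequality accomplishes:
\[
\Bigl[\sum_{i,j}\abs{a_{i,j}}^2\Bigr]^{1/2}\le(mn)^{1/2-1/p}\Bigl[\sum_{i,j}\abs{a_{i,j}}^p\Bigr]^{1/p},
\]
which is exactly the asserted bound with exponent $\delta(p)=\tfrac12-\tfrac1p$.

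None of these steps is genuinely hard; the only care needed is bookkeeping. I must keep the convention for $\otimes$ consistent so that $\norm \operatorname A\operatorname f_k.\mathcal H.$ really is the $\ell^2$ norm of a column, and I should check that the two estimates agree at the overlap $p=2$, where both give constant $1$ and both reduce to the Hilbert--Schmidt identity. The only ``lossy'' factor $(mn)^{\delta(p)}$ enters through the crude embedding $\ell^p\hookrightarrow\ell^2$ on the $mn$ coordinates in the second range, and is the sole source of the stated constant; the first range is loss-free precisely because $\ell^{p/2}$-subadditivity points in the favorable direction.
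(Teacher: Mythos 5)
Your proof is correct and follows essentially the same route as the paper: for $0<p\le 2$ you use the infimum characterization \eqref{e.sch-inf} evaluated at the standard basis together with the subadditivity of $t\mapsto t^{p/2}$, and for $2\le p<\infty$ you use monotonicity of the Schatten norms to reduce to the Hilbert--Schmidt norm followed by H\"older on the $mn$ entries, exactly as in the paper's argument.
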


 \begin{proof}   The case of $0<p<2 $ is clear. Appealing to  \eqref{e.sch-inf}, we use the standard 
 basis $\operatorname e_k $ $1\le{} k\le m $, so that 
 \begin{align*}
 \norm \operatorname A . \schatten p .^p  &  {}\le{} \sum _{k=1}^m \norm \operatorname A \operatorname e _k .. ^p 
 = {}  \sum_{k=1}^m \left[ \sum _{i=1}^n \abs { a _{i,k} }^2\right]^{p/2}
 {}\le{} \sum_{k=1}^m  \sum _{i=1}^n \abs { a _{i,k} }^p .
 \end{align*}
 This case is finished.

 To conclude the proof for $2<p<\infty $, observe that the norms decrease in $p $, hence 
 \begin{align*}
 \norm \operatorname A.\schatten p. &{}\le{} \norm \operatorname A . \schatten 2.
 ={} \left[ \sum _{i=1}^n  \sum _{j=1}^m \abs{ a _{i,j} }^2 \right]^{1/2}
 {}\le{} (mn) ^{\delta(p)} \left[ \sum _{i=1}^n  \sum _{j=1}^m \abs{ a _{i,j} }^p \right]^{1/p}.
 \end{align*}
 The proof is complete.
 \end{proof}
 
More comments about the Schatten norms and nearly weakly orthogonal (NWO) 
functions are made in Section 4.

 \smallskip
   \section{Besov space and its dyadic structure}
  \label{s.besov}

 \subsection{One-parameter}
  
We are interested in those results that relate the Schatten norms to Besov spaces of corresponding symbols.  The functions $\{\varphi_I\mid I\in\mathcal D \} $ will be a wavelet basis for $L^2(\mathbb R) $ with the function $\varphi$ being continuous and rapidly decreasing. 
In the first definition, \eqref{e.besov-one-def}, one may be concerned that the definition depends upon the choice of function $\varphi $.  There is a straight forward lemma which shows this is not the case. 

 \begin{proposition}\label{p.besov-equiv}
   
Let $\varphi $ and $\phi $ be two distinct wavelets, generating wavelet bases $\{\varphi_I\} $ and $\{\phi_I \} $, respectively.
We have the equivalence 
\begin{equation*}
\sum _{I\in\mathcal D } \left[ \abs I ^{-1/2}\abs{\ip f,\varphi_I , } \right] ^p\simeq{} \sum _{I\in\mathcal D } \left[ \abs I ^{-1/2}\abs{\ip f,\phi_I , } \right] ^p.
\end{equation*}
This is valid for any function $f $ for which either side is finite, and implied constants depend only on the 
choice of $0< p<\infty $. 
\end{proposition}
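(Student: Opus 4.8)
The plan is to expand one wavelet system in the other and reduce the claim to an $\ell^p\to\ell^p$ bound for the resulting change-of-basis matrix, which I would then control by almost orthogonality. By the symmetry between $\varphi$ and $\phi$ it is enough to prove one of the two inequalities, say $\sum_I\abs{s_I}^p\lesssim\sum_J\abs{t_J}^p$, where $s_I\eqdef\abs I^{-1/2}\ip f,\varphi_I,$ and $t_J\eqdef\abs J^{-1/2}\ip f,\phi_J,$. Since $\{\phi_J\}$ is an orthonormal basis, $\varphi_I=\sum_{J}\ip\varphi_I,\phi_J,\phi_J$, so that
\[
s_I=\sum_{J\in\mathcal D}A_{I,J}\,t_J,\qquad A_{I,J}\eqdef\abs I^{-1/2}\abs J^{1/2}\,\overline{\ip\varphi_I,\phi_J,}.
\]
Thus it suffices to show $\norm{A}.\ell^p\to\ell^p.\lesssim1$, which I would deduce from pointwise bounds on the entries $A_{I,J}$. (The interchange leading to the display is justified a posteriori by the decay below.)

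The heart of the matter is the almost-orthogonality estimate. Because $\varphi$ and $\phi$ are smooth and rapidly decreasing and, through the Calderón normalization in \eqref{e.uniformlyadapted}, carry as many vanishing moments as one wishes, a Taylor expansion that places the cancellation on the smaller interval and the smoothness on the larger one yields, for arbitrary $L,N$,
\[
\Abs{\ip\varphi_I,\phi_J,}\lesssim\left(\f{\abs I\wedge\abs J}{\abs I\vee\abs J}\right)^{L+\frac12}\left(1+\f{\operatorname{dist}(c(I),c(J))}{\abs I\vee\abs J}\right)^{-N}.
\]
Absorbing the normalizing factor $\abs I^{-1/2}\abs J^{1/2}$ (it is worst on the side $\abs I\le\abs J$), this reads
\[
\abs{A_{I,J}}\lesssim\left(\f{\abs I\wedge\abs J}{\abs I\vee\abs J}\right)^{L}\left(1+\f{\operatorname{dist}(c(I),c(J))}{\abs I\vee\abs J}\right)^{-N},
\]
where $L,N$ may be taken as large as we please at the cost of the implied constant.

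Granting this decay, the boundedness of $A$ is a routine summation. The basic fact is the scale-localized counting estimate: for any $M>1$ and any fixed $I$,
\[
\sum_{J:\,\abs J=2^{k}\abs I}\left(1+\f{\operatorname{dist}(c(I),c(J))}{\abs I\vee\abs J}\right)^{-M}\lesssim 2^{(-k)_+},
\]
and symmetrically with the roles of $I$ and $J$ exchanged; this follows by comparing the sum over dyadic translates to an integral. For $1\le p<\infty$ I would apply Schur's test: organizing $\sum_J\abs{A_{I,J}}$ and $\sum_I\abs{A_{I,J}}$ by the scale ratio $\abs J/\abs I=2^{k}$ and inserting the counting estimate, both suprema are finite once $L>1$ and $N>1$. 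For $0<p<1$ I would instead use subadditivity of $x\mapsto x^p$, which gives
\[
\sum_{I\in\mathcal D}\abs{s_I}^p\le\sum_{J\in\mathcal D}\abs{t_J}^p\sum_{I\in\mathcal D}\abs{A_{I,J}}^p,
\]
and the same counting shows $\sup_J\sum_I\abs{A_{I,J}}^p<\infty$ provided $Lp>1$ and $Np>1$. In every case one takes $L,N>1/p$, concludes $\norm{A}.\ell^p\to\ell^p.\lesssim1$, and then interchanges $\varphi$ and $\phi$ to obtain the reverse inequality, proving the equivalence for any $f$ making one side finite.

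The main obstacle is securing the almost-orthogonality estimate with a sufficiently large exponent $L$. A single vanishing moment yields only $L=1$, precisely the borderline value at which the scale sum fails to converge, and for $0<p<1$ one genuinely needs $L>1/p$; the argument therefore rests on the generating wavelets being smooth with enough vanishing moments, which is exactly what $\varphi,\phi\in C^\infty(\mathbb R)$, rapidly decreasing and satisfying \eqref{e.uniformlyadapted}, provide. Once the requisite off-diagonal decay is in place, the remaining scale-and-position bookkeeping is elementary and uniform in $f$.
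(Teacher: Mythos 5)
Your proof is correct and follows essentially the same route as the paper, which only sketches the standard argument (wavelet expansion of one basis in the other, almost orthogonality, then the summation/Plancherel--P\'olya step) and refers to \cite{HLW}; your Schur-test and $p$-subadditivity bookkeeping for the change-of-basis matrix is a legitimate way to carry out that final step. One small correction: the arbitrarily many vanishing moments your decay estimate requires (in particular $L>1/p$ for small $p$) do not come from the Calder\'on normalization in \eqref{e.uniformlyadapted}, which only forces $\hat\varphi(0)=0$; they come from the classical fact that a rapidly decreasing $C^\infty$ generator of an \emph{orthonormal} wavelet basis automatically has vanishing moments of all orders, which is exactly what the paper's definition of a wavelet basis supplies.
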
 
 \begin{proof}
  This is a standard argument for wavelet characterisation of Besov spaces.
 We also note that the two  wavelet bases need not be associated with the same dyadic grid $\mathcal D$, which could be different grids.  The key steps are to use the wavelet expansion, almost orthogonality estimates and then the Plancherel--P\'olya type inequality. See for example the standard argument in \cite{HLW}.
\end{proof}
We have the standard characterization of the Besov space
$\besov p(\mathbb R) $, $1\leq p<\infty$, as follows (see for example \cite[p. 242]{Tri}), which also reflect that the definition of $\besov p(\mathbb R) $ is independent of the choice of the dyadic grids, and the associated wavelet basis.

 \begin{proposition}\label{classical B wavelet and B difference}

Let $b\in L_{\rm loc}^{1}(\mathbb R)$ and $0< p<\infty$. Then we say that $b$ belongs to the  Besov space $B^p(\mathbb R)$ if
$$ \|b\|_{B^{p}(\mathbb R )}=  \Bigg(\int_{ \mathbb R} \int_{\mathbb R}{\big|b(x)-b(y)\big|^p\over |x-y|^2}dydx \Bigg)^{1\over p}<\infty.  $$
Further, suppose $1\leq p<\infty$, then we have $B^p(\mathbb R) = \besov p(\mathbb R) $ with equivalence of norms.

\end{proposition}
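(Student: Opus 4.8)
The plan is to recognize that both seminorms appearing in the statement are classical descriptions of the \emph{same} homogeneous Besov space, namely $\dot B^{1/p}_{p,p}(\mathbb R)$, and then to quote the standard equivalence between the modulus-of-smoothness and wavelet-coefficient characterizations of such spaces. Thus the proposition is really a bookkeeping statement, matching the normalizations in \eqref{e.besov-one-def} and \eqref{e.uniformlyadapted} against a theorem of the type found in \cite[p.\ 242]{Tri}, together with the wavelet-independence already recorded in Proposition \ref{p.besov-equiv}.

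First I would rewrite the Gagliardo-type integral as a first-order $L^p$ modulus of smoothness. Substituting $y=x+t$ and using Fubini,
\[
\|b\|_{B^p(\mathbb R)}^p = \int_{\mathbb R}\frac{\|b-b(\cdot+t)\|_{L^p(\mathbb R)}^p}{|t|^2}\,dt = \int_{\mathbb R}\Big(|t|^{-1/p}\,\|b-b(\cdot+t)\|_{L^p(\mathbb R)}\Big)^p\,\frac{dt}{|t|},
\]
since $|t|^{-2}=|t|^{-(1+sp)}$ with $s=1/p$. The factor $\|b-b(\cdot+t)\|_{L^p}$ is exactly the first-order $L^p$ difference, so the right-hand side is the first-order modulus-of-smoothness seminorm of $\dot B^{s}_{p,p}(\mathbb R)$ with smoothness $s=1/p$ and summation index $q=p$. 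The classical modulus-of-smoothness characterization then gives $B^p(\mathbb R)=\dot B^{1/p}_{p,p}(\mathbb R)$ with equivalent norms, at least in the range where first-order differences suffice.

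Second I would identify the wavelet side with the same space. The standard Littlewood--Paley/wavelet characterization asserts that, for $\varphi$ smooth, rapidly decreasing, with enough vanishing moments,
\[
\|b\|_{\dot B^{s}_{p,q}(\mathbb R)} \simeq \Big(\sum_{j\in\mathbb Z} 2^{jq(s+\frac12-\frac1p)}\Big(\sum_{k\in\mathbb Z}|\langle b,\varphi_{I_{j,k}}\rangle|^p\Big)^{q/p}\Big)^{1/q},
\]
where $I_{j,k}$ is the dyadic interval of length $2^{-j}$ and $\varphi_{I_{j,k}}$ is $L^2$-normalized as in \eqref{e.uniformlyadapted}. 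Setting $s=1/p$ and $q=p$ collapses the scale exponent to $s+\tfrac12-\tfrac1p=\tfrac12$, so $2^{jp/2}=|I|^{-p/2}$ and the right-hand side becomes $\sum_I |I|^{-p/2}|\langle b,\varphi_I\rangle|^p$, which is exactly $\|b\|_{\besov p(\mathbb R)}^p$ from \eqref{e.besov-one-def}. Proposition \ref{p.besov-equiv} removes any dependence on $\varphi$, yielding $\besov p(\mathbb R)=\dot B^{1/p}_{p,p}(\mathbb R)$. Chaining the two identifications gives $B^p(\mathbb R)=\besov p(\mathbb R)$.

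The step requiring the most care, and where I expect the main obstacle, is precisely the passage between the first-order difference integral and $\dot B^{1/p}_{p,p}$. For $1<p<\infty$ one has $0<s=1/p<1$, and first-order differences characterize $\dot B^{s}_{p,p}$ cleanly, so this range is routine. The genuinely delicate point is the endpoint $p=1$, where $s=1$ is the critical smoothness for first-order differences: there the first-order modulus saturates at order $|t|$, and the equivalence with $\dot B^{1}_{1,1}$ is borderline and must be handled separately (for instance by arguing through the wavelet characterization directly, or by passing to second-order differences), since the elementary difference integral is most sensitive exactly at this scaling. I would therefore treat $1<p<\infty$ by the modulus-of-smoothness route above and isolate $p=1$ as a separate, more careful argument.
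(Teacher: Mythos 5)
The paper gives no written proof of this proposition: it is presented as a classical fact with a pointer to \cite[p.~242]{Tri}, so your write-up is already more explicit than the source. For $1<p<\infty$ your argument is correct and is exactly the intended one. The substitution $y=x+t$ turns the double integral into the first-order $L^p$ modulus seminorm with $|t|^{-2}=|t|^{-(1+sp)}$, $s=1/p\in(0,1)$, where first differences do characterize $\dot B^{1/p}_{p,p}(\mathbb R)$; and on the wavelet side the exponent bookkeeping $s+\tfrac12-\tfrac1p=\tfrac12$ correctly recovers the $L^2$-normalized coefficient sum in \eqref{e.besov-one-def}, so that Proposition~\ref{p.besov-equiv} closes the loop. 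Nothing is missing in that range.

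Your instinct that $p=1$ is the dangerous endpoint is right, but the difficulty is worse than ``borderline'': at $p=1$ the quantity is $\int\int |b(x)-b(y)|\,|x-y|^{-2}\,dx\,dy$, the Gagliardo seminorm with $s=1$, and by the classical constancy phenomenon (Brezis, \emph{How to recognize constant functions}) any $b\in L^1_{\rm loc}(\mathbb R)$ with this integral finite is constant. Since $\mathbb B^1(\mathbb R)=\dot B^1_{1,1}(\mathbb R)$ is a nontrivial space, the asserted equality cannot hold at $p=1$ with the first-difference definition of $B^1(\mathbb R)$; no amount of separate care will establish it, and the two repairs you mention are not equivalent --- arguing ``through the wavelet characterization directly'' proves nothing about the stated integral, while ``passing to second-order differences'' changes the definition of $B^1$ and hence proves a different (correct) statement. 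So your plan is complete for $1<p<\infty$, but the case you defer is a genuine failure of the statement as written rather than a harder instance of it, and the proposition (which the paper invokes only through the containment direction in Proposition~\ref{p. B and Bd}, where triviality of $B^1$ does no harm) should be restricted to $1<p<\infty$ or restated with higher-order differences at the endpoint.
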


 In the dyadic case the same definition applies, defining the Besov space $\besov p_{\text{\rm dyadic}}(\mathbb R)$ as follows, with the functions $\varphi_I $ replaced by Haar functions. 
 \begin{equation}  
 \norm f.\besov p _{\text{\rm dyadic}}(\mathbb R). {}\eqdef{}\left[ \sum _{I\in\mathcal D } \left[ \abs I ^{-1/2}\abs{\ip f,h_I , } \right] ^p \right] ^{1/p },
 \qquad 0<p<\infty.
 \end{equation}
 
 Based on our recent work \cite{LLW}, it is direct to see the following dyadic structure of the Besov space.
 
 \begin{proposition}\label{p.besov dy structure} Suppose $1\leq p<\infty$.
 There are two choices of dyadic grids $ \mathcal D^0$ and $ \mathcal D^1$ for which we have  
 $ \besov p _{\text{\rm dyadic},0}(\mathbb R)\cap  \besov p _{\text{\rm dyadic},1}(\mathbb R) = \besov p (\mathbb R)$, where $\besov p _{\text{\rm dyadic},i}(\mathbb R)$ is the dyadic Besov space associated with the dyadic grid $ \mathcal D^i$, $i=0,1$.
Moreover, we have 
 $$\|b\|_{\besov p(\mathbb R)}\approx   \|b\|_{\besov p _{\text{\rm dyadic},0}(\mathbb R)}+ \|b\|_{\besov p _{\text{\rm dyadic},1}(\mathbb R)}.$$
\end{proposition}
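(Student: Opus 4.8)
The plan is to route everything through the grid-free difference characterisation of Proposition~\ref{classical B wavelet and B difference}, which for $1\le p<\infty$ gives $\|b\|_{\besov p(\mathbb R)}^p\approx\int_{\mathbb R}\int_{\mathbb R}\frac{|b(x)-b(y)|^p}{|x-y|^2}\,dy\,dx$. Since each space is defined by finiteness of its norm, it suffices to prove the stated norm equivalence, and this splits into the bound $\|b\|_{\besov p_{\text{\rm dyadic},i}(\mathbb R)}\lesssim\|b\|_{\besov p(\mathbb R)}$ for each single grid, together with the reverse $\|b\|_{\besov p(\mathbb R)}\lesssim\|b\|_{\besov p_{\text{\rm dyadic},0}(\mathbb R)}+\|b\|_{\besov p_{\text{\rm dyadic},1}(\mathbb R)}$. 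Throughout I use the identity $|I|^{-1/2}|\langle b,h_I\rangle|=\tfrac12|\langle b\rangle_{I_+}-\langle b\rangle_{I_-}|$, where $\langle b\rangle_K$ denotes the mean of $b$ on $K$.

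For the easy inequality, which is valid for an arbitrary grid (hence for both $\mathcal D^0$ and $\mathcal D^1$), I would first write $\langle b\rangle_{I_+}-\langle b\rangle_{I_-}$ as the average of $b(x)-b(y)$ over $(x,y)\in I_+\times I_-$, apply Jensen's inequality (here $p\ge1$ is used), and then use $|x-y|\le|I|$ to replace $|I|^{-2}$ by $|x-y|^{-2}$, obtaining the pointwise estimate
\[
\big(|I|^{-1/2}|\langle b,h_I\rangle|\big)^p\lesssim\int_{I_+}\int_{I_-}\frac{|b(x)-b(y)|^p}{|x-y|^2}\,dy\,dx .
\]
The decisive observation is that the rectangles $I_+\times I_-$, as $I$ runs over the grid, are pairwise disjoint and tile the off-diagonal half-plane $\{y<x\}$: each pair $(x,y)$ with $y<x$ lies in $I_+\times I_-$ for exactly one $I$, namely the minimal dyadic interval containing both $x$ and $y$. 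Summing over $I$ therefore reproduces the difference integral over $\{y<x\}$, which is $\lesssim\|b\|_{\besov p(\mathbb R)}^p$. No shift or special grid is needed for this direction.

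The reverse inequality is where the two grids enter, via the one-third trick: take $\mathcal D^0$ the standard grid and $\mathcal D^1$ its translate by $\tfrac13$, so that every bounded interval is contained in an interval of $\mathcal D^0\cup\mathcal D^1$ of comparable length. Equivalently, for every off-diagonal pair $(x,y)$ at least one grid has its minimal common interval $J$ of $(x,y)$ satisfying $|J|\approx|x-y|$; call such a pair \emph{well resolved} by that grid. Splitting the difference integral according to which grid resolves the pair (each pair counted at least once, which only helps for an upper bound) reduces matters to bounding, for a single grid $\mathcal D^i$, the contribution of its well-resolved pairs. On that set $|x-y|\approx|J|$, so the contribution is $\lesssim\sum_{J\in\mathcal D^i}|J|^{-2}\int_{J_+}\int_{J_-}|b(x)-b(y)|^p\,dy\,dx$, which by the triangle inequality is in turn $\lesssim\sum_{J\in\mathcal D^i}|J|^{-1}\int_J|b-\langle b\rangle_J|^p$.

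It remains to prove the dyadic inequality $\sum_{J\in\mathcal D^i}|J|^{-1}\int_J|b-\langle b\rangle_J|^p\lesssim\|b\|_{\besov p_{\text{\rm dyadic},i}(\mathbb R)}^p$. I would replace $|b-\langle b\rangle_J|$ by the dyadic square function on $J$ (Burkholder--Gundy, valid for $1\le p<\infty$), whose square at $x\in J$ equals $\sum_{K\subseteq J,\,x\in K}(|K|^{-1/2}|\langle b,h_K\rangle|)^2$, and then sum over $J$. For $p=2$ this is a direct interchange of summation using $\sum_{J\supseteq K}|K|/|J|\le 2$; for general $p$ it is a Hardy-type inequality on the dyadic tree. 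This tree inequality is the main obstacle, since the naive triangle-inequality bound on the Haar expansion diverges for $p>1$, so one must genuinely exploit the orthogonality carried by the square function rather than summing absolute values. I expect to establish it exactly along the lines of the wavelet and Plancherel--P\'olya estimates of \cite{LLW,HLW}. Finally, the equivalence of norms yields the claimed identity of spaces.
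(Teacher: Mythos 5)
Your proposal follows essentially the same route the paper takes. The paper itself prints no proof of this one\-/parameter proposition (it defers to \cite{LLW}), but its detailed proofs of the two\-/parameter analogues, Propositions \ref{p. B and Bd} and \ref{p.besov-dyadic continuous}, have exactly your architecture: pass to the difference characterization of Proposition \ref{classical B wavelet and B difference}; prove the single\-/grid containment by inserting differences and applying Jensen/H\"older (this is where $p\ge 1$ enters in both treatments); and prove the reverse via the one\-/third shift, reducing to a local oscillation quantity over dyadic intervals. Your handling of the easy direction is in fact cleaner than the paper's: instead of pairing $I$ with the shifted interval $I+2\lvert I\rvert$ and controlling overlaps, you observe that the rectangles $I_+\times I_-$ are pairwise disjoint subsets of $\{y<x\}$ (a pair $(x,y)$ lies in $I_+\times I_-$ only for the minimal dyadic interval containing both points). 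A minor quibble: these rectangles do not \emph{tile} the half\-/plane, since pairs straddling a grid boundary such as $0$ are never separated into the two halves of a common dyadic interval; but only disjointness is needed for the upper bound, so this costs nothing.

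The one step you leave unproven is the inequality $\sum_{J}\lvert J\rvert^{-1}\int_J\lvert b-\langle b\rangle_J\rvert^p\lesssim \sum_K\beta_K^p$ with $\beta_K=\lvert K\rvert^{-1/2}\lvert\langle b,h_K\rangle\rvert$, and you are right to flag it as the crux. For $1\le p\le 2$ your square\-/function route closes immediately: since $p/2\le 1$ one has $S_Jb(x)^p=\bigl(\sum_{K\subseteq J,\,x\in K}\beta_K^2\bigr)^{p/2}\le\sum_{K\subseteq J,\,x\in K}\beta_K^p$, and then $\sum_{J\supseteq K}\lvert J\rvert^{-1}=2\lvert K\rvert^{-1}$ finishes; no tree Hardy inequality is needed. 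For $p>2$, however, no pointwise or Minkowski\-/type bound works (the obvious Schur sums over scales diverge), and the required estimate is precisely the dyadic $p$-John--Nirenberg/oscillation characterization of $\mathbb B^p$ established in \cite{MR2097606} (see also \cite{Tri}); it is true, but it is a theorem in its own right rather than a corollary of Plancherel--P\'olya. To be fair, the paper's two\-/parameter proof is equally terse at the corresponding moment (``it is straight forward to see that we can make a pure sum on scales''), so your write\-/up is no less complete than the printed argument --- just be aware that for $p>2$ this deferred step carries real content.
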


  \subsection{Product setting}

 Recall that the (product) Besov space on $\mathbb R^n=\mathbb R\times\cdots\times\mathbb R $ is defined by 
\begin{equation}
\norm f.\besov p(\mathbb R\times\cdots\times\mathbb R). {}\eqdef{}\left[ \sum _{R\in\mathcal R } \left[ \abs R^{-1/2}\abs{\ip f,\varphi_R , } \right] ^p \right] ^{1/p },
 \qquad 0<p<\infty.
 \end{equation}
 \begin{proposition}\label{p.besov-equiv}
   
Let $\varphi $ and $\phi $ be two distinct wavelets, generating wavelet bases $\{\varphi_R\} $ and $\{\phi_R \} $, respectively.
We have the equivalence 
\begin{equation*}
\sum _{R\in \mathcal R } \left[ \abs R ^{-1/2}\abs{\ip f,\varphi_R , } \right] ^p\simeq{} \sum _{R\in \mathcal R } \left[ \abs R ^{-1/2}\abs{\ip f,\phi_R , } \right] ^p.
\end{equation*}
This is valid for any function $f $ for which either side is finite, and implied constants depend only on the 
choice of $0< p<\infty $.  The two  wavelet bases need not be associated with the same dyadic grid $\mathcal R$.
\end{proposition}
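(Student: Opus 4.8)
The plan is to deduce the product statement from the one-parameter equivalence already established, exploiting the tensor-product structure of the two bases. By the symmetry of the roles of $\varphi$ and $\phi$ it suffices to bound the left-hand sum by the right-hand one. Since $\{\phi_R\}$ is an orthonormal basis for $L^2(\mathbb R^n)$, I expand $f=\sum_{R'}\ip f,\phi_{R'},\phi_{R'}$ and obtain the identity
\begin{equation*}
\abs R^{-1/2}\ip f,\varphi_R, = \sum_{R'\in\mathcal R} K(R,R')\,\abs{R'}^{-1/2}\ip f,\phi_{R'},,\qquad K(R,R')\eqdef \frac{\abs{R'}^{1/2}}{\abs{R}^{1/2}}\ip \phi_{R'},\varphi_R,.
\end{equation*}
Thus the claim reduces to the boundedness on $\ell^p(\mathcal R)$ of the matrix $K$ acting on the normalized coefficient sequence, and the entire difficulty is concentrated in showing that $K$ is \emph{almost diagonal} with sufficient decay.

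Because $\varphi_R(x)=\prod_{j=1}^n\varphi_{j,R_j}(x_j)$ and $\phi_{R'}(x)=\prod_{j=1}^n\phi_{j,R'_j}(x_j)$, the kernel factorizes over coordinates,
\begin{equation*}
K(R,R') = \prod_{j=1}^n k_j(R_j,R'_j),\qquad k_j(R_j,R'_j)\eqdef \frac{\abs{R'_j}^{1/2}}{\abs{R_j}^{1/2}}\ip \phi_{j,R'_j},\varphi_{j,R_j},_{L^2(\mathbb R)}.
\end{equation*}
For each $j$ the one-parameter factor obeys the standard almost-orthogonality estimate: since $\varphi_{j,\cdot}$ and $\phi_{j,\cdot}$ are uniformly adapted, smooth, rapidly decreasing, and of mean zero, one has
\begin{equation*}
\abs{k_j(R_j,R'_j)} \lesssim \Big(\tfrac{\abs{R_j}}{\abs{R'_j}}\wedge\tfrac{\abs{R'_j}}{\abs{R_j}}\Big)^{M}\Big(1+\tfrac{\operatorname{dist}(R_j,R'_j)}{\abs{R_j}\vee\abs{R'_j}}\Big)^{-M}
\end{equation*}
for a large exponent $M$ that the smoothness and rapid decay of the profiles allow us to take as large as needed. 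The cancellation of the wavelets is exactly what produces the gain in the scale ratio; this is the same ingredient behind the one-parameter proposition proved above. Taking the product over $j$ yields the product almost-diagonal bound for $K$.

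It remains to transfer this bound to $\ell^p$. For $1\le p<\infty$ a Schur test suffices: the scale and distance decay give $\sup_R\sum_{R'}\abs{K(R,R')}\lesssim 1$ and $\sup_{R'}\sum_R\abs{K(R,R')}\lesssim 1$, each supremum factoring into $n$ one-parameter sums that are finite by the estimate above, so $K$ is bounded on $\ell^1$ and $\ell^\infty$ and hence on every $\ell^p$ by interpolation. For $0<p<1$ the triangle inequality fails, and I would instead invoke subadditivity of $t\mapsto t^p$: from the pointwise bound above,
\begin{equation*}
\sum_{R}\big(\abs R^{-1/2}\abs{\ip f,\varphi_R,}\big)^p \le \Big(\sup_{R'}\sum_R \abs{K(R,R')}^p\Big)\sum_{R'}\big(\abs{R'}^{-1/2}\abs{\ip f,\phi_{R'},}\big)^p,
\end{equation*}
and the displayed supremum is again finite because $M$ may be chosen so that $Mp$ beats the summation over scales and translations in each coordinate.

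The main obstacle is the product almost-diagonal estimate itself: unlike the one-parameter case, two rectangles may be comparable in some coordinates while widely separated in scale or location in others, so the decay must be tracked independently in all $n$ directions. The tensor-product form of both bases is precisely what tames this, reducing the estimate to a product of one-parameter almost-orthogonality bounds. An alternative, equally viable route is to change one coordinate at a time, replacing $\varphi_{j,\cdot}$ by $\phi_{j,\cdot}$ for $j=1,\dots,n$ in turn and invoking the one-parameter equivalence with the remaining coordinates frozen; this works provided the one-parameter constants are uniform in the frozen variables, which they are, since the estimate depends only on $p$ and on the fixed profiles $\varphi$ and $\phi$.
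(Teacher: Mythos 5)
Your proposal is correct and follows essentially the route the paper itself indicates (wavelet expansion, almost-orthogonality of the two bases, then summation of the resulting almost-diagonal matrix), merely writing out the details that the paper delegates to the standard product-space argument in \cite{HLW} via the Plancherel--P\'olya inequality. The only point worth making explicit is that taking the scale-gain exponent $M$ ``as large as needed'' (which you need when $0<p<1$ so that $Mp$ beats the counting) relies on the generating profiles being smooth, rapidly decreasing orthonormal wavelets, hence having many vanishing moments, not just the single cancellation built into the adaptedness condition.
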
 
 \begin{proof}
Again, the key step is to use the wavelet expansion, almost orthogonality estimates, and then the Plancherel--P\'olya type inequality. See for example the standard argument for the product Hardy and BMO spaces in \cite[Section 4]{HLW}, which can be easily adapted to the product Besov space setting.
\end{proof}

We also introduce the following definition of product Besov space via difference.
For notational simplicity, for $j=1,\ldots,n$, we let 
$$ \triangle^{(j)}_{y_j} b(x_1,\ldots,x_n) = b(x_1,\ldots,x_n)-b(x_1,\ldots,x_{j-1}, y_j, x_{j+1},\ldots x_n).$$
 To begin with, we first introduce the following definition.
\begin{definition}
Let $b\in L_{\rm loc}^{1}(\mathbb R^n)$ and $0<p<\infty$. Then we say that $b$ belongs to the product Besov space $B^p(\mathbb R \times \cdots\times\mathbb R)$ if
$$ \|b\|_{B^{p}(\mathbb R \times\cdots\times \mathbb R)}=  \Bigg(\int_{ \mathbb R^2} \cdots\int_{\mathbb R^2}{\big| \triangle^{(1)}_{y_1}\cdots\triangle^{(n)}_{y_n} b(x_1,\ldots,x_n)\big|^p\over \prod_{j=1}^n|x_j-y_j|^2}dy_1dx_1\cdots dy_ndx_n \Bigg)^{1\over p}<\infty.  $$
\end{definition}
Next, we first point out that parallel to the classical setting, we have the equivalence of 
$ B^p(\mathbb R \times\cdots\times \mathbb R) $ and $\besov p(\mathbb R \times\cdots\times \mathbb R)$ when $p\geq1$.  That is,
 \begin{proposition}\label{p. B wavelet and B difference}
Suppose $1\leq p<\infty$. We have $B^p(\mathbb R \times\cdots\times \mathbb R) = \besov p(\mathbb R \times \cdots\times\mathbb R) $ with equivalence of norms.
\end{proposition}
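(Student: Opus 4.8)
The plan is to establish the equivalence $B^p(\mathbb R \times\cdots\times \mathbb R) = \besov p(\mathbb R \times \cdots\times\mathbb R)$ by reducing the multi-parameter statement to an $n$-fold iteration of the one-parameter result, Proposition \ref{classical B wavelet and B difference}, exploiting the tensor product structure of both the wavelet basis and the difference operators. First I would observe that the product wavelet norm \eqref{e.besov-nd-def} is built from tensor products $\varphi_R = \prod_{j=1}^n \varphi_{j,R_j}$, so that each coefficient factors as $\ip f,\varphi_R, = \int f(x) \prod_{j=1}^n \varphi_{j,R_j}(x_j)\,dx$. The key structural point is that the mixed difference operator $\triangle^{(1)}_{y_1}\cdots\triangle^{(n)}_{y_n}$ defining the $B^p$ norm is precisely the $n$-fold tensor of the one-parameter differences, and each one-parameter difference annihilates functions that are constant in the corresponding variable. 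This matches exactly the cancellation encoded by a one-parameter wavelet having integral zero in each coordinate.

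The main step would be to run a one-variable Plancherel--P\'olya / wavelet-characterization argument in each coordinate successively, keeping the other variables frozen. Concretely, I would fix the variables $x_2,\ldots,x_n$ (and the corresponding difference variables $y_2,\ldots,y_n$) and apply the one-parameter equivalence of Proposition \ref{classical B wavelet and B difference} to the function $x_1 \mapsto \triangle^{(2)}_{y_2}\cdots\triangle^{(n)}_{y_n} b(x_1,x_2,\ldots,x_n)$, so that the single integral in $x_1,y_1$ against $|x_1-y_1|^{-2}$ is comparable to the $\ell^p$ sum of the first-coordinate wavelet coefficients. Iterating this coordinate by coordinate, and using the fact that all norms involved are genuine $L^p$-type quantities for $p\ge 1$ so that Fubini and the Minkowski/triangle inequality may be applied to interchange and nest the sums and integrals, one arrives at the full product wavelet sum \eqref{e.besov-nd-def}. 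The independence from the choice of wavelet basis is then supplied by Proposition \ref{p.besov-equiv} in the product setting.

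The hard part will be making the iteration rigorous rather than merely formal. The one-parameter equivalence in Proposition \ref{classical B wavelet and B difference} is an equivalence of \emph{scalar} norms, but here I need to apply it with the remaining difference operators and integrations acting as a vector-valued ``weight'' in the frozen variables; controlling the implied constants uniformly so they do not degrade through $n$ successive applications requires that the one-parameter comparison hold with constants depending only on $p$, which is exactly what the one-parameter statement provides. The restriction $p\ge 1$ is essential at precisely this juncture: for $p\ge 1$ one has honest triangle inequalities and can treat the intermediate quantities as $L^p(\ell^p)$ norms and freely swap the order of summation and integration via Fubini and Minkowski; for $0<p<1$ these interchanges fail and the nested argument breaks down, which is why the proposition is stated only for $1\le p<\infty$. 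A clean way to package the iteration is to introduce, for each $0\le k\le n$, the hybrid quantity in which the first $k$ coordinates have been converted to wavelet sums while the last $n-k$ remain in difference form, and then show each passage from $k$ to $k+1$ preserves the norm up to a $p$-dependent constant by a single application of the one-parameter result in the $(k+1)$st variable.
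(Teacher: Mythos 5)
Your argument is correct in substance but takes a genuinely different route from the paper. The paper disposes of Proposition \ref{p. B wavelet and B difference} in one line: it invokes the discrete reproducing formula and the almost orthogonality estimates directly in the tensor product setting (citing \cite[Section 4]{HLW}) together with Proposition \ref{p.besov-equiv}, i.e.\ it runs the multi-parameter Plancherel--P\'olya machinery all at once and omits the details. You instead reduce to an $n$-fold iteration of the one-parameter equivalence (Proposition \ref{classical B wavelet and B difference}), freezing the remaining variables and using that $\triangle^{(1)}_{y_1}\cdots\triangle^{(n)}_{y_n}$ and the kernel $\prod_j\abs{x_j-y_j}^{-2}$ both factor coordinatewise, and that the pairing against $\varphi_{I_1}$ commutes with the difference operators in the other variables. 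This is a legitimate and arguably more self-contained proof: it re-uses the one-parameter statement already recorded in the paper, makes transparent exactly where the product (dominating mixed smoothness) structure of $B^p(\mathbb R\times\cdots\times\mathbb R)$ is used, and avoids redoing the multi-parameter almost-orthogonality estimates; the paper's route, by contrast, is the standard one and generalizes more readily to non-tensor wavelet systems. Your hybrid quantities indexed by $k$ are the right bookkeeping device, and the uniformity of the one-parameter constants in the frozen parameters is indeed the only point that needs care. One small correction: the interchanges of sums and integrals you worry about are of nonnegative $p$-th powers, so Tonelli handles them for \emph{every} $0<p<\infty$; the restriction $1\le p<\infty$ is not forced by Fubini/Minkowski but by the one-parameter input itself, which (as in Proposition \ref{p. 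B and Bd}, via H\"older against the normalized wavelet) is only available for $p\ge1$. This mislocates the role of $p\ge 1$ but does not affect the validity of the proof.
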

\begin{proof}
 By using the reproducing formula and the almost orthogonality estimate in the tensor product setting (see for example \cite[Section 4]{HLW}) and Proposition \ref{p.besov-equiv}, one obtains the above proposition. Details are omitted.
\end{proof}

To establish the dyadic structure of the product Besov space, we only provide details for $\mathbb R^2=\mathbb R \times \mathbb R$. The $n$-parameter setting follows with appropriate modifications. 

We establish the following natural containment of the Besov space and the dyadic Besov space. 
 \begin{proposition}\label{p. B and Bd}
Suppose $1\leq p<\infty$. We have $B^{p}(\mathbb R \times \mathbb R)\subset \besov p_{\text{\rm dyadic}}(\mathbb R \times \mathbb R) $, with estimate
$$ \|b\|_{\besov p_{\text{\rm dyadic}}(\mathbb R \times \mathbb R)}\lesssim \|b\|_{B^{p}(\mathbb R \times \mathbb R)}.$$
\end{proposition}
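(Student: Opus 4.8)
The plan is to bound each dyadic Haar coefficient $\langle b, h_R\rangle$ pointwise by a mixed second difference of $b$, and then sum. Fix a dyadic rectangle $R = I\times J$, so that $h_R = h_I^0\otimes h_J^0$ with both factors of integral zero. The key observation is a cancellation identity: because $\int h_I^0 = \int h_J^0 = 0$, for \emph{every} choice of reference points $y_1,y_2\in\mathbb R$ the three lower-order pieces $b(y_1,x_2)$, $b(x_1,y_2)$, $b(y_1,y_2)$ of the expanded difference integrate to zero against $h_I^0(x_1)h_J^0(x_2)$, leaving
$$\langle b, h_R\rangle = \int_{\mathbb R^2} \triangle^{(1)}_{y_1}\triangle^{(2)}_{y_2} b(x_1,x_2)\, h_I^0(x_1)\, h_J^0(x_2)\, dx_1\, dx_2 .$$
Since the right-hand side is independent of $y_1,y_2$, I would average this identity over $y_1\in I$ and $y_2\in J$; this is the device that pins the reference points to the scale of $R$.

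Taking absolute values and using $\abs{h_I^0}\le\abs I^{-1/2}\mathbf 1_I$ and $\abs{h_J^0}\le\abs J^{-1/2}\mathbf 1_J$, the averaged identity gives
$$\abs R^{-1/2}\abs{\langle b, h_R\rangle}\le \frac{1}{(\abs I\abs J)^2}\int_I\int_J\int_I\int_J \abs{\triangle^{(1)}_{y_1}\triangle^{(2)}_{y_2} b(x_1,x_2)}\, dx_1\, dx_2\, dy_1\, dy_2 ,$$
that is, $\abs R^{-1/2}\abs{\langle b, h_R\rangle}$ is at most the average of $\abs{\triangle^{(1)}_{y_1}\triangle^{(2)}_{y_2}b}$ over $I\times J\times I\times J$. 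Here is where the hypothesis $p\ge 1$ enters: Jensen's inequality lets me pull the $p$th power inside the average, so that $[\abs R^{-1/2}\abs{\langle b, h_R\rangle}]^p$ is bounded by the average of $\abs{\triangle^{(1)}_{y_1}\triangle^{(2)}_{y_2}b}^p$ over the same box.

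Next I would sum over all $R = I\times J\in\mathcal R$ and use Tonelli to exchange the sum with the fourfold integral, all integrands being nonnegative. This reduces everything to a purely geometric estimate in each coordinate: for fixed $(x_1,y_1)$,
$$\sum_{I\in\mathcal D\,:\, x_1,y_1\in I} \frac{1}{\abs I^2}\lesssim \frac{1}{\abs{x_1-y_1}^2},$$
and likewise in the second variable. Indeed, the dyadic intervals containing both $x_1$ and $y_1$ form an increasing tower $I_0\subsetneq I_1\subsetneq\cdots$ with $\abs{I_k} = 2^k\abs{I_0}$ and $\abs{I_0}\ge\abs{x_1-y_1}$, so the left side is a convergent geometric series equal to $\tfrac43\abs{I_0}^{-2}\le\tfrac43\abs{x_1-y_1}^{-2}$. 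Because the weight $(\abs I\abs J)^{-2}$ factors across the two coordinates, the two one-dimensional sums multiply to reproduce exactly the kernel $\abs{x_1-y_1}^{-2}\abs{x_2-y_2}^{-2}$ in the definition of $\|b\|_{B^p(\mathbb R\times\mathbb R)}$; summing the $p$th powers then yields $\|b\|_{\besov p_{\text{\rm dyadic}}(\mathbb R\times\mathbb R)}^p\lesssim \|b\|_{B^p(\mathbb R\times\mathbb R)}^p$.

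The only genuinely delicate point is this geometric summation together with the fact that it factors cleanly into the two variables; the mixed-difference structure of $\|\cdot\|_{B^p(\mathbb R\times\mathbb R)}$ is precisely matched to the tensor structure $h_I^0\otimes h_J^0$ of the Haar basis, so no cross terms survive. The remaining ingredients — the vanishing-moment cancellation, Jensen, and Tonelli — are routine. Since the computation is insensitive to the number of factors, the identical argument (with $n$ geometric series) gives the general $n$-parameter statement.
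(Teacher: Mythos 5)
Your proof is correct and rests on the same two pillars as the paper's argument: the product cancellation of $h_R=h_I^0\otimes h_J^0$, which kills the three lower-order terms and replaces $b$ by the mixed second difference $\triangle^{(1)}_{y_1}\triangle^{(2)}_{y_2}b$ inside the pairing, and Jensen/H\"older to bring the $p$-th power inside the average (this is exactly where both arguments use $1\le p<\infty$). The genuine difference is in how the kernel $\abs{x_1-y_1}^{-2}\abs{x_2-y_2}^{-2}$ is produced. The paper places the reference point $(y_1,y_2)$ in a \emph{translated, separated} rectangle $R'=(I+2\abs I)\times(J+2\abs J)$, so that $\abs{x_i-y_i}$ is pointwise comparable to the corresponding side length of $R$; the factor $\abs R^{-2}$ is then converted to the kernel before summing, and the final sum over $R$ relies (implicitly, in the closing sentence) on the bounded overlap of the regions $R\times R'$. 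You instead average over $(y_1,y_2)\in I\times J$ itself, where only the one-sided inequality $\abs{x_i-y_i}\le\abs{I_i}$ holds, and you recover the kernel only after Tonelli, via the geometric series $\sum_{I\ni x_1,y_1}\abs I^{-2}\simeq\abs{I_0}^{-2}\le \tfrac43\abs{x_1-y_1}^{-2}$ over the tower of dyadic intervals containing both points. Your version makes the interchange of sum and integral and the resulting bounded-overlap count completely explicit, at the cost only of the observation that the smallest dyadic interval containing both $x_1$ and $y_1$ has length at least $\abs{x_1-y_1}$; both routes are valid, and yours is, if anything, the more self-contained of the two.
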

\begin{proof}
The key inequalities here are specific to a choice of dyadic rectangle $R$. 
Let $R=I\times J$, where both $I$ and $J$ are dyadic intervals. 
Below, let $I' = I + 2\lvert  I \rvert$ and $J' = J + 2\lvert  J \rvert$.  Then it is clear that  $R'= I'\times J'$ is another dyadic rectangle with volume comparable to that of $R$. Hence, by the cancellation condition of $h_R$, we have
\begin{align*}
& \bigg|\int_Rb(x_1,x_2)h_R(x_1,x_2)dx_1dx_2\bigg|\, |R|^{-{1\over2}} \\
& \lesssim\inf_{(y_1,y_2)\in   R'}  \bigg|\int_R\Big(b(x_1,x_2)-b(x_1,y_2)-b(y_1,x_2)+b(y_1,y_2)\Big)h_R(x_1,x_2)dx_1dx_2\bigg|\, |R|^{-{1\over2}}
\\
& \lesssim |R|^{-{3\over2}}\int_{  R'}  \bigg|\int_R \triangle^{(1)}_{y_1}\triangle^{(2)}_{y_2} b(x_1,x_2)\ h_R(x_1,x_2)dx_1dx_2\bigg| 
\,  dy_1dy_2 
\\
& \lesssim |R|^{-{3\over2}} \bigg( \int_{  R'} \int_R\big| \triangle^{(1)}_{y_1}\triangle^{(2)}_{y_2} b(x_1,x_2)\ \big|^p dy_1dy_2dx_1dx_2 \bigg)^{1\over p}   \bigg(\int_{ R'} \int_R |h_R(x_1,x_2)|^{p'} dx_1dx_2dy_1dy_2\bigg)^{1\over p'}
\\
& \lesssim \bigg(\int_{ R'} \int_R \big| \triangle^{(1)}_{y_1}\triangle^{(2)}_{y_2} b(x_1,x_2)\ \big|^p dy_1dy_2dx_1dx_2 \ {1 \over |R|^2} \bigg) ^{\frac1 {p}} 
 \\
 & \lesssim \bigg(\int_{ R'} \int_R {\big| \triangle^{(1)}_{y_1}\triangle^{(2)}_{y_2} b(x_1,x_2)\ \big|^p\over |x_1-y_1|^2|x_2-y_2|^2}dy_1dy_2dx_1dx_2 \bigg) ^{\frac1p},
\end{align*}
where in the third inequality we use H\"older's inequality and this is where we need $1\leq p<\infty$. Note also that if $p=1$, then $p'=\infty$ and the second factor in the right-hand side of the third inequality will become $\| h_R \|_{L^\infty}$.

Take the power $p$ on both sides,  and sum over all $R, R'\in \mathcal D\times \mathcal D$, to get an expression dominated by $\lVert b \rVert_{B ^p  (\R\times \R)} ^p$.  
\end{proof}

Moreover, we also have a weaker version of the reverse containment. That is, we will need another set of dyadic intervals.  We take two dyadic grids $ \mathcal D^0 $ and $\mathcal D^1$ so that for all intervals $I$ there is a $Q\in 
 \mathcal D^0 \cup \mathcal D^1$ with 
 \begin{equation}
 \label{e:oneThird}   I\subset Q \subset 4 I . 
 \end{equation}
One option is that  $ \mathcal D^0 $ is the standard dyadic system in $\R$ and  $ \mathcal D^1 $ is the  `one-third shift' 
 of  $ \mathcal D^0 $, see for example \cite{MR3420475}.

  \begin{proposition}\label{p.besov-dyadic continuous} Suppose $1\leq p<\infty$.
   There are two choices of grids $ \mathcal D^0$ and $ \mathcal D^1$ for which we have  
 $$B^{p}(\mathbb R \times \mathbb R)= \mathbb B^{p,(0,0)} _{\text{\rm dyadic}}(\mathbb R \times \mathbb R)\cap  \mathbb B^{p,(0,1)} _{\text{\rm dyadic}}(\mathbb R \times \mathbb R)\cap  \mathbb B^{p,(1,0)} _{\text{\rm dyadic}}(\mathbb R \times \mathbb R)\cap  \mathbb B^{p,(1,1)} _{\text{\rm dyadic}}(\mathbb R \times \mathbb R) ,$$ where $\mathbb B^{p,(i,j)} _{\text{\rm dyadic}}(\mathbb R \times \mathbb R)$ is the dyadic Besov space associated with $ \mathcal D^i\times \mathcal D^j$ for $i,j\in\{0,1\}$.
Moreover, we have 
 $$\|b\|_{B^{p}(\mathbb R \times \mathbb R)}\approx   \|b\|_{\mathbb B^{p,(0,0)} _{\text{\rm dyadic}}(\mathbb R \times \mathbb R)}+ \|b\|_{\mathbb B^{p,(0,1)} _{\text{\rm dyadic}}(\mathbb R \times \mathbb R)}+ \|b\|_{\mathbb B^{p,(1,0)} _{\text{\rm dyadic}}(\mathbb R \times \mathbb R)}+ \|b\|_{\mathbb B^{p,(1,1)} _{\text{\rm dyadic}}(\mathbb R \times \mathbb R)}.$$

\end{proposition}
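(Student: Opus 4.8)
The plan is to prove the two containments separately and read off the norm equivalence from them. I take $\mathcal D^0$ and $\mathcal D^1$ to be the same two grids used in the one-parameter statement, namely the standard grid and its one-third shift, so that \eqref{e:oneThird} holds in each coordinate; the four grids appearing in the statement are then $\mathcal D^i\times\mathcal D^j$, $i,j\in\{0,1\}$.

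For the containment $B^{p}(\mathbb R\times\mathbb R)\subset\bigcap_{i,j}\mathbb B^{p,(i,j)}_{\text{\rm dyadic}}(\mathbb R\times\mathbb R)$, together with the estimate $\sum_{i,j}\|b\|_{\mathbb B^{p,(i,j)}_{\text{\rm dyadic}}(\mathbb R\times\mathbb R)}\lesssim\|b\|_{B^p(\mathbb R\times\mathbb R)}$, I would simply invoke Proposition \ref{p. B and Bd}. Its proof is insensitive to which dyadic grid is used: the only ingredients are the product cancellation of $h_R$ and the auxiliary rectangle built from the translates $I'=I+2|I|$ and $J'=J+2|J|$, which remain in whatever grid $I$ and $J$ live in. Applying the proposition to each of the four grids $\mathcal D^i\times\mathcal D^j$ and summing the four resulting estimates gives this direction.

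The reverse estimate $\|b\|_{B^p(\mathbb R\times\mathbb R)}\lesssim\sum_{i,j}\|b\|_{\mathbb B^{p,(i,j)}_{\text{\rm dyadic}}(\mathbb R\times\mathbb R)}$ is where the work lies, and I would prove it by peeling off one coordinate at a time with Fubini and the one-parameter result. Writing out the mixed difference and reorganizing the integrations gives
$$\|b\|_{B^{p}(\mathbb R\times\mathbb R)}^p = \int_{\mathbb R^2}\left[\int_{\mathbb R^2}\frac{\big|g_{x_2,y_2}(x_1)-g_{x_2,y_2}(y_1)\big|^p}{|x_1-y_1|^2}\,dx_1\,dy_1\right]\frac{dx_2\,dy_2}{|x_2-y_2|^2},$$
where, for frozen $x_2,y_2$, the slice $g_{x_2,y_2}(t)\eqdef b(t,x_2)-b(t,y_2)$ is regarded as a function of one variable. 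The bracketed integral is exactly $\|g_{x_2,y_2}\|_{B^p(\mathbb R)}^p$, so by the one-dimensional identity $B^p(\mathbb R)=\besov p(\mathbb R)=\besov p _{\text{\rm dyadic},0}(\mathbb R)\cap\besov p _{\text{\rm dyadic},1}(\mathbb R)$ of Propositions \ref{classical B wavelet and B difference} and \ref{p.besov dy structure} (valid for $1\le p<\infty$) it is dominated, with constant depending only on $p$, by $\sum_{i}\sum_{I\in\mathcal D^i}|I|^{-p/2}|\langle g_{x_2,y_2},h_I^0\rangle_{x_1}|^p$. Since $\langle g_{x_2,y_2},h_I^0\rangle_{x_1}=F_I(x_2)-F_I(y_2)$ with $F_I(x_2)\eqdef\langle b(\cdot,x_2),h_I^0\rangle_{x_1}$, pulling the nonnegative sum over $I$ through the outer integral by Tonelli yields
$$\|b\|_{B^{p}(\mathbb R\times\mathbb R)}^p \lesssim \sum_{i\in\{0,1\}}\sum_{I\in\mathcal D^i}|I|^{-p/2}\int_{\mathbb R^2}\frac{\big|F_I(x_2)-F_I(y_2)\big|^p}{|x_2-y_2|^2}\,dx_2\,dy_2 = \sum_{i}\sum_{I\in\mathcal D^i}|I|^{-p/2}\,\|F_I\|_{B^p(\mathbb R)}^p .$$
Applying the same one-dimensional equivalence to each $F_I$ in the second variable and using $\langle F_I,h_J^0\rangle_{x_2}=\langle b,h_I^0\otimes h_J^0\rangle=\langle b,h_{I\times J}\rangle$ then gives
$$\|b\|_{B^{p}(\mathbb R\times\mathbb R)}^p \lesssim \sum_{i,j\in\{0,1\}}\sum_{I\in\mathcal D^i}\sum_{J\in\mathcal D^j}|I|^{-p/2}|J|^{-p/2}\big|\langle b,h_{I\times J}\rangle\big|^p = \sum_{i,j\in\{0,1\}}\|b\|_{\mathbb B^{p,(i,j)}_{\text{\rm dyadic}}(\mathbb R\times\mathbb R)}^p .$$
Taking $p$-th roots and using $\ell^1\hookrightarrow\ell^p$ over the four-term index set finishes the reverse direction, and combining the two directions gives the stated equivalence.

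The main obstacle is this reverse direction, and inside it the delicate point is the parametric use of the one-dimensional theorem: one must check that for almost every frozen pair the slice $g_{x_2,y_2}$ (and later each $F_I$) is a bona fide one-dimensional symbol to which Propositions \ref{classical B wavelet and B difference} and \ref{p.besov dy structure} apply, with a constant uniform in the frozen variables (it is, depending only on $p$). I would justify the interchange of summation and integration and the two successive applications either by Tonelli, every quantity being nonnegative, or by first proving the estimate a priori for nice $b$ and passing to the limit. The hypothesis $1\le p<\infty$ enters precisely through the identification $B^p(\mathbb R)=\besov p(\mathbb R)$, which is exactly where $p\ge1$ is required and which fails for $p<1$.
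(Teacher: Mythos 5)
Your argument is correct, and for the substantive inequality $\|b\|_{B^{p}(\mathbb R \times \mathbb R)}\lesssim \sum_{i,j}\|b\|_{\mathbb B^{p,(i,j)}_{\text{\rm dyadic}}(\mathbb R \times \mathbb R)}$ it takes a genuinely different route from the paper. The paper works directly in two parameters: it splits the defining integral over annuli $2^{a}\lvert I\rvert<\lvert x_1-y_1\rvert\le 2^{a+1}\lvert I\rvert$ (and likewise in the second variable), tunes the shift parameters so that $I$ and its translate sit in the two halves of a single interval of $\mathcal D^{0}\cup\mathcal D^{1}$, rewrites the double difference as the four terms $B_1,\dots,B_4$ built from $b-E^{(1,0)}b-E^{(0,1)}b+E^{(1,1)}b$, and then uses that this local double mean oscillation expands only in Haar functions supported inside the chosen rectangle, so that a sum over scales closes the estimate. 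You instead exploit that the difference definition of $B^{p}(\mathbb R\times\mathbb R)$ literally tensorizes as an iterated integral of one-dimensional slice norms, so that two applications of the one-parameter identification $B^{p}(\mathbb R)=\mathbb B^{p}_{\text{\rm dyadic},0}(\mathbb R)\cap\mathbb B^{p}_{\text{\rm dyadic},1}(\mathbb R)$ (Propositions \ref{classical B wavelet and B difference} and \ref{p.besov dy structure}), interleaved with Tonelli, yield the two-parameter statement; the computation $\langle g_{x_2,y_2},h_I^0\rangle=F_I(x_2)-F_I(y_2)$ and $\langle F_I,h_J^0\rangle=\langle b,h_{I\times J}\rangle$ is exactly right, and $p\ge 1$ enters only through the one-dimensional equivalence and the final $\ell^1\hookrightarrow\ell^p$ step. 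Your route is shorter and cleaner and makes the role of the hypothesis $p\ge1$ transparent, at the cost of leaning on the one-parameter dyadic structure theorem as a black box (which the paper states but only sketches, citing earlier work), whereas the paper's two-parameter computation is self-contained modulo its own terse ``only the smaller scales contribute'' summation. The easy containment is handled identically in both arguments, by applying Proposition \ref{p. B and Bd} to each of the four product grids, and the measurability and a.e.-finiteness caveats you flag are genuine but routine, resolved exactly as you indicate.
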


\begin{proof}
To begin with, we first note that
\begin{align}\label{e:IA}
 &\|b\|_{B^{p}(\mathbb R \times \mathbb R)}^p\\
 &=  \int_{ \mathbb R^2} \int_{\mathbb R^2} {\big| \triangle^{(1)}_{y_1}\triangle^{(2)}_{y_2} b(x_1,x_2)\big|^p\over |x_1-y_1|^2|x_2-y_2|^2}dy_1dy_2dx_1dx_2 
\nonumber\\
&\lesssim
 \sum_{\substack{I\in\mathcal D\\J\in\mathcal D}}\int_{I\times J} \int_{\substack{\{y_1\in \mathbb R:  2 ^{a} \lvert  I \rvert<|x_1-y_1|\leq 2 ^{a+1} \lvert  I \rvert\} \\ \{y_2\in \mathbb R:  2 ^{a} \lvert  J \rvert<|x_2-y_2|\leq 2 ^{a+1} \lvert  J \rvert\}}} {\big| \triangle^{(1)}_{y_1}\triangle^{(2)}_{y_2} b(x_1,x_2)\big|^p\over |x_1-y_1|^2|x_2-y_2|^2}dy_1dy_2dx_1dx_2 
\nonumber\\
&\lesssim
 \sum_{\substack{I\in\mathcal D\\J\in\mathcal D} }{1\over|R|^{2}}\int_{I\times J} \int_{\substack{\{y_1\in \mathbb R:  2 ^{a} \lvert  I \rvert<|x_1-y_1|\leq 2 ^{a+1} \lvert  I \rvert\} \\ \{y_2\in \mathbb R:  2 ^{a} \lvert  J \rvert<|x_2-y_2|\leq 2 ^{a+1} \lvert  J \rvert\}}} {\big| \triangle^{(1)}_{y_1}\triangle^{(2)}_{y_2} b(x_1,x_2)\big|^p}dy_1dy_2dx_1dx_2 
\nonumber
\\  
& \lesssim 
 \sum_{\substack{I\in\mathcal D\\J\in\mathcal D} }{1\over|R|^{2}} \sum _{m_1=n_1} ^{2n_1-1}\sum _{m_2=n_2} ^{2n_2-1}\nonumber\\
 &\qquad
 \int_{I\times J} \int_{\substack{\{y_1\in \mathbb R:  2 ^{a} {m_1\over n_1}\lvert  I \rvert<|x_1-y_1|\leq 2 ^{a} {m_1+1\over n_1} \lvert  I \rvert\}\\ \{y_2\in \mathbb R:  2 ^{a} {m_2\over n_2}\lvert  j \rvert<|x_2-y_2|\leq 2 ^{a} {m_2+1\over n_2} \lvert  J \rvert\}}} {\big| \triangle^{(1)}_{y_1}\triangle^{(2)}_{y_2} b(x_1,x_2)\big|^p}dy_1dy_2dx_1dx_2 .\nonumber
\end{align}
 The second integral above is over a symmetric interval. Consider the two intervals 
\begin{equation} \label{e:IAK}
I, \qquad    I + 2 ^{a}\lvert  I \rvert[m_1/n_1, (m_1+1)/n_1],  \qquad  n_1\leq m_1 < 2n_1.  
\end{equation}
Now, we  choose $a=5$, and $n_1 =1000$, so the second interval is smaller in length, but still comparable to $I$ in length.  And, they are separated by a distance approximately 
$2^a \lvert  I \rvert$. By \eqref{e:oneThird},  we can choose $a$ so that there is a dyadic $I'\in \mathcal D ^{0} \cup \mathcal D ^{1}$ 
which contains both intervals above, and moreover $I$ is contained in the left half of $I'$, and $I + 2 ^{a}\lvert  I \rvert[m_1/n_1, (m_1+1)/n_1]$ the right half. 
We can argue similarly for $I -2 ^{a}\lvert  I \rvert[m_1/n_1, (m_1+1)/n_1]$, as well as for the dyadic interval $J$ and the parameters $m_2$ and $n_2$. Below we continue with $R=I\times J$ and $R'=I'\times J'$.

Next, observe the following identity:
\begin{align*}
\triangle^{(1)}_{y_1}\triangle^{(2)}_{y_2} b(x_1,x_2)
&=\big(b(x_1,x_2) - E^{(1,0)}b(x_2)- E^{(0,1)}b(x_1)+ E^{(1,1)}b\big)\\
&\quad -\big( b(y_1,x_2) - E^{(1,0)}b(x_2)- E^{(0,1)}b(y_1)+ E^{(1,1)}b\big)\\
&\quad -\big( b(x_1,y_2) - E^{(1,0)}b(y_2)- E^{(0,1)}b(x_1)+ E^{(1,1)}b\big)\\
&\quad+ \big(b(y_1,y_2) - E^{(1,0)}b(y_2)- E^{(0,1)}b(y_1)+ E^{(1,1)}b\big)\\
&=:B_1(x_1,x_2)+B_2(y_1,x_2)+B_3(x_1,y_2)+B_4(y_1,y_2),
\end{align*}
where
$$E^{(1,0)}b(\cdot)={1\over |I'|}\int_{I'}b(z_1,\cdot)dz_1,\qquad 
E^{(0,1)}b(\cdot)={1\over |J'|}\int_{J'}b(\cdot,z_2)dz_2,$$
$$ E^{(1,1)}b={1\over |R'|}\int_{R'}b(z_1,z_2)dz_1dz_2. $$

In particular, for the main term in  \eqref{e:IA}, with fixed $n_1\leq m_1 < 2n_1$ and $n_2\leq m_2 < 2n_2$,  we have 
\begin{align}
&{1\over|R|^{2}} 
 \int_{I\times J} \int_{\substack{\{y_1\in \mathbb R:  2 ^{a} {m_1\over n_1}\lvert  I \rvert<|x_1-y_1|\leq 2 ^{a} {m_1+1\over n_1} \lvert  I \rvert\}\\ \{y_2\in \mathbb R:  2 ^{a} {m_2\over n_2}\lvert  j \rvert<|x_2-y_2|\leq 2 ^{a} {m_2+1\over n_2} \lvert  J \rvert\}}} {\big| \triangle^{(1)}_{y_1}\triangle^{(2)}_{y_2} b(x_1,x_2)\big|^p}dy_1dy_2dx_1dx_2\\
& \lesssim  
 {1\over|R|^{2}}  \Biggl\{ \int_{R'} \int_{R'}{\big|B_1(x_1,x_2)\big|^p}dy_1dy_2dx_1dx_2
 +
\int_{R'} \int_{R'}{\big|B_2(y_1,x_2)\big|^p}dy_1dy_2dx_1dx_2\nonumber\\
&\qquad\qquad+  \int_{R'} \int_{R'}{\big|B_3(x_1,y_2)\big|^p}dy_1dy_2dx_1dx_2
 +
\int_{R'} \int_{R'}{\big|B_4(y_1,y_2)\big|^p}dy_1dy_2dx_1dx_2
 \Biggr\}.\nonumber
\end{align}

It follows that the norm  $ \lVert b \rVert_{B ^p (\mathbb R \times \mathbb R)} ^p$ is dominated by several terms, one of which is 
\begin{equation} \label{e:1of8}
 \sum_{R\in\mathcal D^0\times \mathcal D^0} 
 \lvert  R\rvert^{-2} \int_{R} \int_{R}{\big|b(x_1,x_2) - E^{(1,0)}b(x_2)- E^{(0,1)}b(x_1)+ E^{(1,1)}b\big|^p}dy_1dy_2dx_1dx_2. 
\end{equation}
The other terms are obtained by varying the role of $m_1$ in \eqref{e:IAK} and the similar index $m_2$, considering the negative of the  intervals in \eqref{e:IAK}, 
exchanging the role of the dyadic grid, and  the roles of $x_i$ and $y_i$, $i=1,2$. All cases are similar, so we continue with the one above.  
In \eqref{e:1of8}, the point is that 
\begin{equation}
\mathbf 1 _{R} \big(b(x_1,x_2) - E^{(1,0)}b(x_2)- E^{(0,1)}b(x_1)+ E^{(1,1)}b\big) = \sum _{\substack{\tilde R \in \mathcal D ^{0}\times \mathcal D ^{0} \\  \tilde R\subset R}} \langle  b,h_{\tilde R} \rangle h_{\tilde R}. 
\end{equation}
That is, only the smaller scales contribute.  But then, it is straight forward to see that we can make a pure sum on scales.  
\begin{align}
\eqref{e:1of8} & \lesssim 
\sum_{R\in\mathcal D^0\times \mathcal D^0} 
 \lvert  
 R\rvert^{-2} \int_{R} \int_{R}{\big|\langle  b,h_R \rangle h_R\big|^p}dydx 
 \lesssim  \|b\|_{\mathbb B^{p,(0,0)} _{\text{\rm dyadic}}(\mathbb R \times \mathbb R)}^p. 
\end{align}
This completes the proof.  
 \end{proof}
Thus, based on Propositions \ref{p. B wavelet and B difference} and \ref{p.besov-dyadic continuous}, we obtain that

\begin{proposition}\label{p.besov-dyadic continuous 2}Suppose $1\leq p<\infty$.
   There are two choices of grids $ \mathcal D^0$ and $ \mathcal D^1$ for which we have  
 $$\mathbb B^{p}(\mathbb R \times \mathbb R)= \mathbb B^{p,(0,0)} _{\text{\rm dyadic}}(\mathbb R \times \mathbb R)\cap  \mathbb B^{p,(0,1)} _{\text{\rm dyadic}}(\mathbb R \times \mathbb R)\cap  \mathbb B^{p,(1,0)} _{\text{\rm dyadic}}(\mathbb R \times \mathbb R)\cap  \mathbb B^{p,(1,1)} _{\text{\rm dyadic}}(\mathbb R \times \mathbb R) ,$$ where $\mathbb B^{p,(i,j)} _{\text{\rm dyadic}}(\mathbb R \times \mathbb R)$ is the dyadic Besov space associated with $ \mathcal D^i\times \mathcal D^j$ for $i,j\in\{0,1\}$.
Moreover, we have 
 $$\|b\|_{B^{p}(\mathbb R \times \mathbb R)}\approx   \|b\|_{\mathbb B^{p,(0,0)} _{\text{\rm dyadic}}(\mathbb R \times \mathbb R)}+ \|b\|_{\mathbb B^{p,(0,1)} _{\text{\rm dyadic}}(\mathbb R \times \mathbb R)}+ \|b\|_{\mathbb B^{p,(1,0)} _{\text{\rm dyadic}}(\mathbb R \times \mathbb R)}+ \|b\|_{\mathbb B^{p,(1,1)} _{\text{\rm dyadic}}(\mathbb R \times \mathbb R)}.$$

\end{proposition}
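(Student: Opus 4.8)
The plan is to derive this proposition as a purely formal consequence of the two preceding equivalences, chaining them through the difference space $B^{p}(\mathbb R\times\mathbb R)$. Both the continuous wavelet Besov space $\mathbb B^{p}(\mathbb R\times\mathbb R)$ appearing on the left and the intersection of the four dyadic spaces on the right have already been identified with $B^{p}(\mathbb R\times\mathbb R)$, so the entire content reduces to transitivity of these identifications together with the accompanying norm equivalences.

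First I would invoke Proposition \ref{p. B wavelet and B difference}: for $1\leq p<\infty$ it supplies the identification $\mathbb B^{p}(\mathbb R\times\mathbb R)=B^{p}(\mathbb R\times\mathbb R)$ with $\|b\|_{\mathbb B^{p}(\mathbb R\times\mathbb R)}\approx\|b\|_{B^{p}(\mathbb R\times\mathbb R)}$. This rewrites the left-hand side of the asserted equality in terms of the difference space, to which the dyadic decomposition of the previous proposition applies.

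Next I would apply Proposition \ref{p.besov-dyadic continuous}, using the same pair of grids $\mathcal D^{0}$ and $\mathcal D^{1}$ (for example the standard dyadic grid together with its one-third shift, so that \eqref{e:oneThird} is available), to obtain
\begin{equation*}
B^{p}(\mathbb R\times\mathbb R)=\bigcap_{i,j\in\{0,1\}}\mathbb B^{p,(i,j)}_{\text{\rm dyadic}}(\mathbb R\times\mathbb R),
\qquad
\|b\|_{B^{p}(\mathbb R\times\mathbb R)}\approx\sum_{i,j\in\{0,1\}}\|b\|_{\mathbb B^{p,(i,j)}_{\text{\rm dyadic}}(\mathbb R\times\mathbb R)}.
\end{equation*}
Combining this with the previous display yields at once the claimed set equality $\mathbb B^{p}(\mathbb R\times\mathbb R)=\bigcap_{i,j}\mathbb B^{p,(i,j)}_{\text{\rm dyadic}}(\mathbb R\times\mathbb R)$ and the stated norm comparison, the norm on the left being written interchangeably as $\|b\|_{B^{p}}$ or $\|b\|_{\mathbb B^{p}}$ by the first step.

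I do not expect any genuine obstacle at this stage: all of the analytic work resides in the two input propositions, and in particular in the reverse containment of Proposition \ref{p.besov-dyadic continuous}, whose proof rests on the shifted-grid selection \eqref{e:oneThird} and the telescoping identity for $\triangle^{(1)}_{y_1}\triangle^{(2)}_{y_2}b$. The only compatibility point I would confirm is that the two grids furnished by Proposition \ref{p.besov-dyadic continuous} may be used simultaneously with the wavelet identification of Proposition \ref{p. B wavelet and B difference}; this is immediate, since the latter is valid for any choice of dyadic grid and wavelet basis by Proposition \ref{p.besov-equiv}. The $n$-parameter statement follows by the identical chaining once the $n$-parameter analogues of the two input propositions are in hand.
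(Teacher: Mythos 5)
Your proposal is correct and matches the paper exactly: the paper introduces this proposition with the phrase ``based on Propositions \ref{p. B wavelet and B difference} and \ref{p.besov-dyadic continuous}, we obtain that,'' i.e.\ it is derived by precisely the chaining of the wavelet--difference identification with the dyadic decomposition that you describe. Nothing further is needed.
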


\bigskip
 
 \section{The Proof of the One Parameter Result}  
 \label{s.one-dimensional}

 %%%%%%%%%%%%%%%%%%%%%%%%%%

 The proof in fact has very little to do with the function theory of Besov spaces.  The main results in Theorem \ref{t.schatten-one} and Theorem \ref{c.schatten-one} can be rephrased this way. 
 
\begin{theorem}
\label{t.withoutbesov}
Suppose that $\{\varphi_I\mid I\in\mathcal D\} $ and $\{\phi_I\mid I\in\mathcal D \} $ are adapted to the dyadic intervals, and at least one collection has zeros,  then we have the estimate 
\begin{equation} 
\label{e.withoutbesov}
\norm \sum _{I\in\mathcal D} \alpha(I)\, \varphi_I \otimes \phi_I .\schatten p. {}\lesssim{} \norm \alpha(\cdot).\ell^p ., \qquad 0<p<\infty. 
\end{equation} 
If both collections of functions are uniformly adapted to $\mathcal D $, then the reverse inequality holds. 

In the setting of the Haar functions we have the estimate 
\begin{equation} 
\label{e.Haarwithoutbesov}
\norm \sum _{I\in\mathcal D} \alpha(I)\, h^\epsilon_I \otimes h^\delta_I .\schatten p.\simeq{} \norm\alpha(\cdot).\ell^p ., \qquad 0<p<\infty,  
\end{equation} 
with $\{\epsilon,\delta\}\not=\{1,1\} $.
\end{theorem}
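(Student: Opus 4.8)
The plan is to prove the two dual \emph{nearly weakly orthogonal} (NWO) estimates that together contain the whole statement: the synthesis bound $\bigl\lVert\sum_I\alpha(I)\,\varphi_I\otimes\phi_I\bigr\rVert_{\schatten p}\lesssim\lVert\alpha\rVert_{\ell^p}$, and the testing bound $\bigl(\sum_I|\langle S\varphi_I,\phi_I\rangle|^p\bigr)^{1/p}\lesssim\lVert S\rVert_{\schatten p}$. The only structural input needed is that every family adapted to $\mathcal D$ is NWO in the maximal sense, i.e. $f\mapsto\sup_I|I|^{-1/2}|\langle f,\varphi_I\rangle|\mathbf 1_I$ is bounded on $L^2(\mathbb R)$; for the father wavelet $\phi_I=|I|^{-1/2}\mathbf 1_I$ this is exactly the dyadic maximal function, so no cancellation is required there, while the cancellation of the one assumed-cancellative family supplies decay \emph{across scales}.

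First I would dispose of $0<p\le1$ for the synthesis bound: each summand $\alpha(I)\,\varphi_I\otimes\phi_I$ is rank one with $\schatten p$-norm equal to $|\alpha(I)|\,\lVert\varphi_I\rVert_2\lVert\phi_I\rVert_2=|\alpha(I)|$, so the quasi-triangle inequality \eqref{e.schattentrianglep} gives the estimate at once, using no cancellation. Next, at $p=2$ I would expand $\lVert T\rVert_{\schatten 2}^2=\sum_{I,J}\alpha(I)\overline{\alpha(J)}\langle\varphi_J,\varphi_I\rangle\langle\phi_I,\phi_J\rangle$ and run a Schur/Cotlar estimate: the cancellation of the good family forces geometric decay in the scale difference $|\log_2(|I|/|J|)|$, the adaptation of both families forces polynomial spatial decay, and the resulting Gram kernel is bounded on $\ell^2$, yielding $\lVert T\rVert_{\schatten 2}^2\lesssim\lVert\alpha\rVert_{\ell^2}^2$. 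The same computation, read through the identity $\langle S\varphi_I,\phi_I\rangle=\langle K_S,\phi_I\otimes\overline{\varphi_I}\rangle_{L^2(\mathbb R^2)}$ with $K_S$ the Hilbert--Schmidt kernel of $S$, shows that the diagonal system $\{\phi_I\otimes\overline{\varphi_I}\}$ is Bessel in $L^2(\mathbb R\times\mathbb R)$ (one-variable cancellation suffices), which is the testing bound at $p=2$. Complex interpolation of the linear map $\alpha\mapsto\sum_I\alpha(I)\varphi_I\otimes\phi_I$ between the endpoints $\ell^1\to\schatten 1$ and $\ell^2\to\schatten 2$ then delivers the synthesis bound for $1<p<2$.

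For $p>2$ I would pass to trace duality, $\lVert T\rVert_{\schatten p}=\sup\{|\operatorname{tr}(TS^*)|:\lVert S\rVert_{\schatten{p'}}\le1\}$, and compute $\operatorname{tr}(TS^*)=\sum_I\alpha(I)\overline{\langle S\varphi_I,\phi_I\rangle}$, so that H\"older reduces the synthesis bound at $p$ to the testing bound at the conjugate exponent $p'\in(1,2)$. Finally the reverse inequality, valid when both families are uniformly adapted --- which by the Calder\'on normalization $\int_0^\infty|\hat\varphi(t\xi)|^2\,dt/t=1$ forces both to be cancellative, hence honest Bessel wavelet families --- follows by applying the testing bound to $S=T$ itself: here $\langle T\varphi_I,\phi_I\rangle=\alpha(I)+\sum_{J\ne I}\alpha(J)\langle\varphi_I,\varphi_J\rangle\langle\phi_J,\phi_I\rangle$ with off-diagonal Gram matrix bounded on $\ell^p$ by the same almost-orthogonality, and passing to the biorthogonal dual frame (for which the diagonal pairing recovers $\alpha(I)$ exactly) yields $\lVert\alpha\rVert_{\ell^p}\lesssim\lVert T\rVert_{\schatten p}$. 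The Haar statement \eqref{e.Haarwithoutbesov} is then the special case: when $\varepsilon=\delta=0$ the operator is literally diagonal in the Haar basis so the two norms are equal, and the mixed cases $(0,1),(1,0)$ are covered by the NWO estimates with the explicit Haar and father--wavelet families.

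The main obstacle is exactly the range $p>2$, equivalently the testing bound for $1<q<2$. Rank-one subadditivity only reaches $p\le1$, and one cannot interpolate \emph{down} from the operator-norm endpoint $p=\infty$, because with only one cancellative family the operator need not be bounded for $\alpha\in\ell^\infty$ (this is precisely the remark following the theorem). Thus the duality loop relating synthesis at $p>2$ to testing at $p'<2$ must be broken by a direct argument, and the obstruction is concrete: the possibly non-cancellative father--wavelet family is \emph{not} a Bessel system, so the clean $q=1$ testing endpoint fails and one must instead exploit its maximal-function (Carleson) control together with the cross-scale decay coming from the single cancellative family --- a stopping-time/weak-type estimate on the distribution of the numbers $|\langle S\varphi_I,\phi_I\rangle|$ in the spirit of Rochberg--Semmes. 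Establishing this testing bound for $q<2$ is the technical heart of the argument.
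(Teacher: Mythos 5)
Your proposal is complete and correct only on the range $0<p\le 2$ of the upper bound (rank-one subadditivity for $p\le1$, a Schur/Cotlar estimate at $p=2$, and interpolation in between are all legitimate). For $2<p<\infty$, and for the reverse inequality when $p<2$, there is a genuine gap which you yourself flag: everything is reduced by trace duality to the testing estimate $\bigl(\sum_I|\langle S\varphi_I,\phi_I\rangle|^{q}\bigr)^{1/q}\lesssim\lVert S\rVert_{\schatten q}$ for $1<q<2$ (and for $q\le1$ if the lower bound is to cover all $p$), and this estimate is never proved --- it is only described as ``the technical heart'' requiring a Rochberg--Semmes-type stopping-time argument. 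Since the non-cancellative family $h^1_I=|I|^{-1/2}\mathbf 1_I$ is not a Bessel system, that testing bound does not follow from the $q=2$ case by any soft argument, and you correctly note that one cannot interpolate down from $p=\infty$ either. As written, the proposal therefore does not establish \eqref{e.withoutbesov} for $p>2$, nor the full equivalence in \eqref{e.Haarwithoutbesov}.

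The paper closes exactly this gap by a different and more elementary device that treats all $0<p<\infty$ at once and never invokes duality, interpolation, or a testing bound. One expands the non-cancellative function over coarser scales, $h^1_I=\sum_{J\supsetneq I}\nu_{I,J}\sqrt{|I|/|J|}\,h^0_J$ as in \eqref{e.expandindicator}, and groups the resulting terms by the relative scale $m$. Each group $\operatorname S_m=\sum_J h^0_J\otimes H_{m,J}$ has the crucial property that the functions $H_{m,J}$ are mutually orthogonal, so the operator is already in singular value form and its Schatten norm is computed exactly: $\lVert\operatorname S_m\rVert_{\schatten p}=\bigl(\sum_J\lVert H_{m,J}\rVert_2^p\bigr)^{1/p}\le 2^{\delta(p)m}\lVert\alpha\rVert_{\ell^p}$, by H\"older applied to the $2^m$ coefficients inside each $J$ (this is \eqref{e.Sm<}). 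The factor $2^{-m/2}$ coming from the expansion then beats $2^{\delta(p)m}$ for every $p$, and the triangle or quasi-triangle inequality sums the series; in the wavelet case the same scheme runs with Lemma~\ref{l.coefficients} supplying the almost-orthogonality. The lower bound is likewise obtained directly --- projecting onto a scale-separated subfamily via Proposition~\ref{p.ortholower}, extracting a main term that is exactly in singular value form, and showing the remainder is exponentially small in the separation parameter --- rather than through a testing bound. To salvage your architecture you would have to actually prove the $q<2$ testing estimate from NWO theory; otherwise the scale-expansion decomposition is the shorter road.
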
 
 
We first point out that the conditions of $\varphi_I$ and $\phi_I$ ``to be adapted to the dyadic intervals'' are close to the notion of ``nearly weakly orthogonal sequences'' introduced by Rochberg and Semmes, \cite{RochSem}. However, we do not require the compact support condition, but require suitable decay and regularity instead. Thus, it has the advantage in dealing with the continuous setting via wavelet functions or more general functions, which was missing before.

Of course the Haar functions are an orthonormal basis for  $L^2(\mathbb R) $, and the first and most elementary example of a wavelet.  The proof below simplifies considerably for Haar functions, and the reader is encouraged to read through the argument with this case in mind first.  
 
 \subsection{The Proof of Theorem~\ref{t.withoutbesov}: The Haar Case.} 
 
 We begin with the obvious estimate 
 \begin{equation*}
 \norm \sum _{I\in\mathcal D}  \alpha(I) \, h_I^0 \otimes h_I^0 . \schatten p .  {}\simeq{} \norm \alpha(\cdot) .\ell^p .  .
 \end{equation*}
To prove the full argument in \eqref{e.Haarwithoutbesov}, a certain extension of the above fact is needed. Without lost of generality we will prove the following: 
 \begin{equation}\label{upper 4.3}
\norm \sum _{I\in\mathcal D}  \alpha(I) \, h_I^1 \otimes h_I^0 . \schatten p .\lesssim {} \norm \alpha(\cdot) .\ell^p . 
 \end{equation}
and
 \begin{equation} \label{lower 4.3}
\norm \sum _{I\in\mathcal D}  \alpha(I) \, h_I^1 \otimes h_I^0 . \schatten p .\gtrsim {} \norm \alpha(\cdot) .\ell^p .. 
 \end{equation}

 The main point is the explicit representation 
 \begin{equation}  \label{e.expandindicator} 
  h^1_I{}=\sum _{J:\ I\varsubsetneqq J }{\sqrt{\abs I}} { h_J(c(I)) } \, h_J^0 
  {}=\sum _{J:\ I\varsubsetneqq J } \nu_{I,J} \sqrt{\tfrac{\displaystyle \abs I}{\displaystyle \abs J} }   \, h_J^0 ,
  \end{equation}
  where the $\nu_{I,J} \in\{\pm1\} $ are determined by $I $ being in the left or the right half of $J $.

 For an integer $m>0 $ and for $J\in\mathcal D$, we define the $m$-fold children of $J$ to be
 \begin{equation}  \label{e.dyadicJ}
 \mathcal D(m,J) {}\eqdef{}\{ I\in\mathcal D\mid I\subset J, 2^m\abs I=\abs J\}.
 \end{equation}
 Now, consider the operators 
 \begin{equation} \label{e.Hmdef}
  H_{m,J} {}\eqdef{}\sum _{I\in\mathcal D(m,J)}  \nu_{I,J}\,\alpha(I)\, h_I^0
 \end{equation}
and
 \begin{equation} \label{e.Smdef}
 \operatorname S_m {}\eqdef{}  \sum _{J\in\mathcal D} h_J^0\otimes  H_{m,J}.
 \end{equation}
 The choices of signs $\nu_{I,J} $ are determined as in \eqref{e.expandindicator}. 
 Our observation is that this operator $\operatorname S_m$ also has an effective estimate of its norm as follows.  
 \begin{equation}  \label{e.Sm<}
 \norm \operatorname S_m.\schatten p. {}\lesssim{} 2 ^{\delta(p)m} \norm \alpha(\cdot).\ell^p., \qquad 0<p<\infty, 
 \end{equation}
 where $\delta(p)=\max\left(0,\frac12-\frac1p\right) $.
 Indeed, the functions $H_{m,J} $ are orthogonal in $J $, and  we have the estimate 
 \begin{equation} 
 \label{e.HmJ}
 \norm H _{m,J }.2.{}={} \left[ \sum _{I\in\mathcal D(m,J)} \abs { \alpha(I) }^2\right]^{1/2}
{} \le{} 2 ^{ \delta(p)m}\left[ \sum _{I\in\mathcal D(m,J)} \abs { \alpha(I) }^p\right]^{1/p}.
 \end{equation}
   And this clearly implies our observation in \eqref{e.Sm<}. 
 
 \medskip 
 
 Our inequality \eqref{e.Haarwithoutbesov} then follows.  Taking \eqref{e.expandindicator} and 
 \eqref{e.Sm<} into account, it is clear that we can write 
 \begin{equation*}
 \sum _{I\in\mathcal D} \alpha(I)\, h^1_I\otimes h^{0}_I {}={}\sum _{m=1}^\infty 2^{-m/2} \operatorname S_m .
 \end{equation*}
 For $1\le{}p<\infty $, the Schatten norm obeys the triangle inequality, hence, together with \eqref{e.Sm<}, we can estimate 
 \begin{equation} \label{e.normdecay}
 \begin{split}
 \norm \sum _{I\in\mathcal D} \alpha(I) h^1_I\otimes h^{0}_I . \schatten p.&{}\le{}
 {} \sum _{m=1}^\infty 2^{-m/2} \norm \operatorname S_m .\schatten p . 
 \\ & \le{} \norm \alpha(\cdot).\ell^p . \sum _{m=1}^\infty 2^{-m \min\left(\frac12,\frac1p\right)} 
 \\ & {}\lesssim{}\norm \alpha(\cdot).\ell^p . .
 \end{split}
 \end{equation}
 In the case that $0<p<1 $, we can rely upon the subadditivity as in \eqref{e.schattentrianglep}, and a very similar 
 argument finishes the proof of the upper bound as in \eqref{upper 4.3}.

 \bigskip 
 
 We  prove the lower bound as in \eqref{lower 4.3}.  Fix $\alpha (\cdot) $ so that $\norm \alpha(\cdot) .\ell ^p .=1 $, 
 We want to show that for 
 \begin{equation*}
\operatorname T_\alpha {}\eqdef{}\sum _{I\in\mathcal D} \alpha(I)\, h^1_I\otimes h_I^0,
 \end{equation*}
we have $\norm \operatorname T_\alpha.\schatten p. {}\gtrsim{}1 $.

 A collection of dyadic intervals 
 $\mathcal D_\ell $ will \emph{have scales separated by $\ell $ } if 
  it satisfies the conditions 
  $I\in \mathcal D_\ell  $ implies $I\pm2\abs I \in \mathcal D_\ell $, but $I\pm\abs I \not\in \mathcal D_\ell $ and  $\{\log_2 \abs 
  I\mid I\in\mathcal D_\ell\}  {}={} a+\ell \mathbb Z $ for some choice of integer $a $. 
 All dyadic intervals $\mathcal D $ are a union of $2\ell $ subcollections with scales separated by $\ell $.  
 Of course passing to such a subcollection will suggest a loss of order $\ell^{-1} $, but from other 
 aspects of the argument below, we will be able to pick up an exponential decay in $\ell $.

 For an integer $\ell $ to be chosen,  we can separate the scales in the 
 dyadic intervals, choosing one particular $\mathcal D_\ell $ so that 
 \begin{equation}  \label{e.loweralpha} 
  \sum _{I\in\mathcal D_\ell } \abs{\alpha(I)}^p\ge(2\ell)^{-p}. 
  \end{equation} 
 Let $\operatorname P_\ell $ be the projection onto the span of the functions $\{ h_J\mid J\in\mathcal D_\ell \} $, i.e.,
 $$ \operatorname P_\ell (f) = \sum_{J\in D_\ell} \langle f,h_J^0\rangle h_J^0. $$
Let $\mathcal D_\ell' $ be the collection of \emph{parents}  of  those dyadic intervals in $\mathcal D_\ell $  (the parent of $I $ is the next  dyadic larger interval) and let $\operatorname P'_\ell $ be the corresponding projection.  
 Appealing to Proposition~\ref{p.ortholower} we  have the estimate 
 \begin{equation*}
 \norm \operatorname T_\alpha.\schatten p.\ge\norm \operatorname P_\ell' \operatorname T_\alpha \operatorname P_\ell
 .\schatten p.  .
 \end{equation*}
 We shall show that  $\norm \operatorname P_\ell' \operatorname T_\alpha \operatorname P_\ell
 .\schatten p.$ is at least $(8 \ell)^{-1} $  for $\ell $ sufficiently large 
 depending only on $p $. 
 
 Define
 \begin{equation*}
 \operatorname T^1_\alpha {}\eqdef{} \sum _{I\in\mathcal D_\ell}  v_{I,I'} \ \alpha(I)\, h _{I' }^0\otimes h_I^0, 
 \end{equation*}
 where $I' $ is the parent of $I $ and again the $\nu_{I,I'} \in\{\pm1\} $ are determined by $I $ being in the left or the right half of $I' $. 
 Then the $\mathbb S^p$ norm can be calculated exactly from \eqref{e.schatten-def} as follows
 \begin{equation*}
 \norm \operatorname  T^1_\alpha.\schatten p.^p=\sum _{I\in\mathcal D_\ell}   \alpha(I)^p {}\ge{}(2\ell)^{-p},
 \end{equation*}
 where the above inequality follows from \eqref{e.loweralpha}.
 This is the main term in providing a lower bound on the Schatten norm for $\operatorname T_\alpha $.  
 
 We appeal to some of the estimates used in the proof of the upper bound.  Define 
 \begin{equation*}
\operatorname  T^m_\alpha {}\eqdef{} \sum _{I\in\mathcal D'_\ell}  h_I\otimes \widetilde H _{m\ell+1, I},
 \end{equation*}
where $\widetilde H_{m\ell+1, I}$ is similar as in the definition from \eqref{e.Smdef}, given by
 \begin{equation} \label{e.tilde Hmdef}
  \widetilde H_{m\ell+1, I} {}\eqdef{}\sum _{\substack{ K\in \mathcal D_\ell\\ K\in\mathcal D(m\ell+1,I)}}  \nu_{K,I}\,\alpha(K)\, h_K^0.
 \end{equation}
 
 Note that 
 \begin{equation*}
 \operatorname P_\ell' \operatorname T_\alpha \operatorname P_\ell=2^{-{1\over2}} T^1_\alpha+ \sum _{m=2}^\infty 2 ^{-\frac12(m\ell+1)}\operatorname T^m_\alpha .
 \end{equation*}
Repeating the estimates as in \eqref{e.Sm<} and \eqref{e.normdecay}, we have the estimate 
 \begin{equation*}
 \norm \sum _{m=2}^\infty 2 ^{-\frac12(m\ell+1)}\operatorname T^m_\alpha .\schatten p. {}\lesssim{}2^{-\min\left(\frac12,\frac1p\right) \ell } .
 \end{equation*}
 The implied constant depends upon $0<p<\infty $.  Therefore, for an absolute choice of $\ell $, we will have the estimate 
 $\norm \operatorname T_\alpha .\schatten p . \ge(8\ell)^{-1} $.  This completes the proof of \eqref{lower 4.3}.
 
Thus,  combining \eqref{upper 4.3} and \eqref{lower 4.3}, we obtain that \eqref{e.Haarwithoutbesov} holds.  The proof  of the Haar Case in Theorem~\ref{t.withoutbesov} is complete.
\hskip5.5cm $\Box$

\subsection{The Proof of Theorem~\ref{t.withoutbesov}: The Wavelet Case} 
  
Assume that both  collections $\{\varphi_I \} $ and $\{\phi_I\} $  are wavelet bases, then the operator in \eqref{e.withoutbesov} is already given in singular value  form, and the theorem is trivial.  The Theorem follows from the lemma below:

\begin{lemma}
\label{l.gettingridofzeros}
We have the inequality 
\begin{equation} 
\label{e.gettingridofzeros} 
\norm \sum _{I\in\mathcal D} \alpha(I) \varphi_I\otimes \phi_I .\schatten p. {}\lesssim{}\norm \alpha(\cdot) .\ell^p.,\qquad 0<p<\infty,
\end{equation}
assuming only that $\{\phi_I \} $ are adapted to $\mathcal D $.  If this collection of functions is uniformly adapted to $\mathcal D $, then the reverse inequality holds.  
\end{lemma}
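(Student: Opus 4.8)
The plan is to reduce the general wavelet case to the Haar case already treated, by expanding the possibly non-cancellative factor $\varphi_I$ in a wavelet (or Haar) basis and controlling the resulting off-diagonal interactions through the adaptation estimates \eqref{e.adapted}. The key observation is that only the upper bound requires proof: if both collections are genuine wavelet bases, then $\sum_I \alpha(I)\varphi_I\otimes\phi_I$ is already in singular-value form (since the $\{\varphi_I\}$ and $\{\phi_I\}$ are orthonormal), so $\norm{\sum_I \alpha(I)\varphi_I\otimes\phi_I}.\schatten p.^p = \sum_I \abs{\alpha(I)}^p$ exactly, giving the reverse inequality. Thus the content is the inequality \eqref{e.gettingridofzeros} under the sole hypothesis that $\{\phi_I\}$ is adapted to $\mathcal D$.

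First I would expand the adapted function $\phi_I$ against a fixed wavelet basis $\{w_J\}$, writing $\phi_I = \sum_{J} \langle \phi_I, w_J\rangle\, w_J$. The almost-orthogonality heart of the matter is the decay estimate for the coefficients $\langle \phi_I, w_J\rangle$: using \eqref{e.adapted} for $\phi_I$ together with the cancellation and regularity of the wavelet $w_J$, one obtains a bound of the form
\begin{equation*}
\abs{\langle \phi_I, w_J\rangle} \lesssim \left(\tfrac{\abs{I\wedge J}}{\abs{I\vee J}}\right)^{1/2}\!\!\left(1+\tfrac{\operatorname{dist}(I,J)}{\abs{I}+\abs{J}}\right)^{-N},
\end{equation*}
which is the standard off-diagonal decay measured in both relative scale and relative position. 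Substituting this expansion, the operator becomes $\sum_J \big(\sum_I \alpha(I)\langle\phi_I,w_J\rangle\,\varphi_I\big)\otimes w_J$, and I would group the sum over $I$ according to the relative scale $m$ with $2^{\pm m}\abs J \sim \abs I$ and the relative position, exactly paralleling the children-grouping $\mathcal D(m,J)$ and the operators $\operatorname S_m$ from the Haar case in \eqref{e.Smdef}--\eqref{e.Sm<}.

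The engine of the estimate is then identical in spirit to \eqref{e.HmJ}: for each fixed generation gap $m$, the inner functions are (almost) orthogonal across $J$, so by the matrix inequality in Proposition~\ref{p.mn} applied at a single scale, or directly by the near-orthogonality, the scale-$m$ piece satisfies a bound of type $2^{\delta(p)m}\norm{\alpha(\cdot)}.\ell^p.$, while the geometric decay factor $(\abs{I\wedge J}/\abs{I\vee J})^{1/2}$ contributes a gain of $2^{-m/2}$, together with a summable spatial factor from the $(1+\operatorname{dist}/\ldots)^{-N}$ weight. Summing over $m$ via the triangle inequality \eqref{e.schattentriangle} for $1\le p<\infty$ (and its quasi-subadditive analogue \eqref{e.schattentrianglep} for $0<p<1$), just as in \eqref{e.normdecay}, collapses the series to the desired $\norm{\alpha(\cdot)}.\ell^p.$ bound. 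The one extra bookkeeping point compared to the Haar argument is that here one must also absorb the spatial tails: for fixed $m$, intervals $I$ at a given generation relative to $J$ are indexed by their position, and the polynomial decay in $\operatorname{dist}(I,J)$ must be summed in $\ell^p$, which is routine once the exponent $N$ in \eqref{e.adapted} is taken large enough.

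The main obstacle I anticipate is controlling the \emph{spatial} (not merely scale-wise) off-diagonal interactions cleanly in the Schatten norm. In the Haar case the functions were compactly supported, so $H_{m,J}$ involved only genuine descendants and the orthogonality in $J$ was exact; with merely adapted $\phi_I$ the supports overlap and one must use the quantitative decay to recover an approximate orthogonality that still feeds Proposition~\ref{p.mn} or the infimum/supremum characterizations \eqref{e.sch-inf}--\eqref{e.sch-sup} without losing more than a summable constant. Organizing the double sum over $(I,J)$ so that each scale-and-translation block is handled by a finite-matrix Schatten estimate, and then verifying that the blockwise bounds sum geometrically in $m$ and polynomially (hence summably) in the spatial offset, is the step that carries the real weight; everything else mirrors the Haar computation already completed.
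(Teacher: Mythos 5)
Your upper-bound strategy is essentially the paper's: expand the merely-adapted factor in an auxiliary wavelet basis, organize the double sum over $(I,J)$ by relative scale and relative position, exploit orthogonality of the resulting block functions at fixed scale and position to read off the Schatten norm of each block, and sum the blocks with the triangle inequality \eqref{e.schattentriangle} (or \eqref{e.schattentrianglep} for $p<1$). Two points, however, are genuine gaps as written.

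First, the coefficient estimate you propose, $\abs{\langle\phi_I,w_J\rangle}\lesssim(\abs{I\wedge J}/\abs{I\vee J})^{1/2}(1+\operatorname{dist}(I,J)/(\abs I+\abs J))^{-N}$, is only the size bound, and it does not close the argument in the regime $\abs J\ll\abs I$: there are roughly $\abs I/\abs J$ wavelets $w_J$ of that scale meeting the bulk of $\phi_I$, so $\bigl[\sum_{J\,:\,\abs J=2^{-k}\abs I}\abs{\langle\phi_I,w_J\rangle}^2\bigr]^{1/2}$ is of order $1$ uniformly in $k$, and the resulting sum over scales of the block operators has no decay and diverges. One must actually cash in the vanishing moment of $w_J$ against the regularity \eqref{e.adapted} of $\phi_I$, which gives the additional bound $2^{-3k/2}$; the geometric mean with the size bound yields $2^{-k}$ (this is exactly Lemma~\ref{l.coefficients}, with $\Delta(m)=\abs m$ in the relevant range), and that extra half power is precisely what beats the multiplicity and makes the scale sum converge. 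You name the cancellation as an ingredient but the exponent you record does not use it.

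Second, and more seriously, the reverse inequality is not trivial, and your justification for it is incorrect. \emph{Uniformly adapted} in the sense of \eqref{e.uniformlyadapted} means the $\phi_I$ are translates and dilates of a single fixed profile satisfying a Calder\'on reproducing condition; such a family is in general far from orthonormal, so the operator is \emph{not} in singular-value form and its Schatten norm cannot be read off termwise. The paper's lower bound occupies more than half of the proof: one splits $\mathcal D$ into $O(\ell^2)$ subcollections with scales separated by $\ell$, selects one carrying at least an $\ell^{-2}$ fraction of the $\ell^p$ mass of $\alpha$, compresses $\operatorname{T}_\alpha$ between two wavelet projections using Proposition~\ref{p.ortholower}, isolates a main term $\operatorname{T}^1_\alpha$ which genuinely is in singular-value form and has Schatten norm $\gtrsim\ell^{-2}$, and then shows via the same almost-orthogonality estimates that the remainder has Schatten norm $O(2^{-\eta\ell})$, after which $\ell$ is fixed large depending on $p$. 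Without some substitute for this compression-plus-main-term argument, half of the lemma is unproved.
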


  In the case of Haar functions, the main point is the explicit expansion of \eqref{e.expandindicator}. 
  In the current setting, of course we have the expansion 
  \begin{equation*}
  \varphi_I=\sum _{J\in\mathcal D} \ip \varphi_I,w_J,\, w_J ,
  \end{equation*}
  as $\{w_J \} $ is an orthogonal basis.  But the expansion is not quite so clean.  Nevertheless, we have the 
  following general almost orthogonality estimate.

  \begin{lemma}\label{l.coefficients}
  Denoting $2^m\abs I=\abs J $, we have the inequality 
  \begin{equation}\label{e.wIJ} 
 \abs{ \ip \varphi_{I},w_J, } {}\lesssim{} 2^{- \Delta (m)}\left( 1+\frac{\text{\rm dist}(I,J)}{\abs I+\abs J }\right) ^{-\eta}
	 , \qquad   \ m\in \mathbb Z.
 \end{equation}
 Here, $\Delta(m)={} \abs m $ if $m\le{}0 $ , and $\Delta(m)=\frac12m $ if $m>0 $, $\eta>0 $ is a large positive 
 constant, namely $N-1 $, where $N $ appears in \eqref{e.adapted}. 
 \end{lemma}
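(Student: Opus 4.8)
The plan is to prove \eqref{e.wIJ} by splitting into the two scale regimes $m>0$ and $m\le 0$, treating in each case a pairing of size, smoothness, and cancellation. In both regimes the spatial decay is produced by the elementary bump-convolution bound
\begin{equation*}
\int_{\mathbb R}\Bigl(1+\tfrac{\abs{x-a}}{s}\Bigr)^{-N}\Bigl(1+\tfrac{\abs{x-b}}{t}\Bigr)^{-N}\,dx\ \lesssim\ \min(s,t)\Bigl(1+\tfrac{\abs{a-b}}{\max(s,t)}\Bigr)^{-(N-1)},\qquad N>1,
\end{equation*}
which is exactly what forces the loss of a single power and hence the exponent $\eta=N-1$. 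Throughout I use the adaptedness bounds \eqref{e.adapted}, the $L^2$ normalizations $\norm{\varphi_I}.2.=\norm{w_J}.2.=1$, and the fact that $w_J$, being a wavelet, has integral zero.

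First I would handle $m>0$, so that $\abs J\ge\abs I$ and $\varphi_I$ is the function at the finer scale. Here I use no cancellation and simply estimate $\abs{\ip \varphi_I,w_J,}\le\int\abs{\varphi_I}\abs{w_J}$. Inserting the $\alpha=0$ bounds from \eqref{e.adapted}, factoring out $\abs I^{-1/2}\abs J^{-1/2}$, and applying the convolution bound with $s=\abs I$, $t=\abs J$ produces a scale gain $(\abs I/\abs J)^{1/2}=2^{-m/2}$ and the spatial factor $(1+\operatorname{dist}(I,J)/\abs J)^{-(N-1)}$. Since $\abs I+\abs J\simeq\abs J$ in this regime, this is precisely the claimed estimate with $\Delta(m)=\tfrac12 m$.

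Next, for $m\le 0$ we have $\abs J\le\abs I$ and $w_J$ is the finer-scale function, so I would exploit its vanishing integral and write $\ip \varphi_I,w_J,=\int\bigl(\varphi_I(x)-\varphi_I(c(J))\bigr)w_J(x)\,dx$. On the region where $w_J$ is concentrated, the mean value theorem together with the $\alpha=1$ derivative bound in \eqref{e.adapted} gives $\abs{\varphi_I(x)-\varphi_I(c(J))}\lesssim\abs{x-c(J)}\,\abs I^{-3/2}\bigl(1+\operatorname{dist}(I,J)/\abs I\bigr)^{-N}$, and combining with $\int\abs{x-c(J)}\abs{w_J(x)}\,dx\lesssim\abs J^{3/2}$ yields the scale gain $(\abs J/\abs I)^{3/2}=2^{3m/2}\le 2^{m}=2^{-\abs m}$; as $\abs I+\abs J\simeq\abs I$ here, the spatial factor again has the right form. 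The step I expect to be the main obstacle is the careful treatment of these spatial factors: the localized derivative bound is only legitimate where the segment from $c(J)$ to $x$ stays in a region on which the decay of $\varphi_I'$ is essentially constant, so I would split the $x$-integral into a near part (handled by the mean value argument) and the tails of $w_J$ (handled by the bare size estimate of the case $m>0$), and then invoke the convolution bound to assemble the two decay tails into the single factor $\bigl(1+\operatorname{dist}(I,J)/(\abs I+\abs J)\bigr)^{-(N-1)}$. Ensuring that all implied constants are uniform in $m\in\mathbb Z$ and in the relative position of $I$ and $J$ is the real content of the argument, rather than any individual inequality.
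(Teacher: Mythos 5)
Your proof is correct and follows essentially the same route as the paper: the bare size/decay estimate from \eqref{e.adapted} handles $m>0$, and the vanishing integral of $w_J$ (the finer-scale function) supplies the extra gain of a derivative when $m\le 0$. The only cosmetic difference is in the second regime, where the paper recovers the spatial decay by taking a geometric mean of the crude bound $2^{-\frac12\abs m}(1+\operatorname{dist}(I,J)/(\abs I+\abs J))^{-N+1}$ with the cancellation bound $2^{-\frac32\abs m}$, while you carry the decay factor directly through a near/far splitting of the $x$-integral; both yield the lemma since $N$ is large.
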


\begin{proof}  This is elementary.  
 On the one hand, by using \eqref{e.adapted}, we have 
 \begin{equation*}
 \abs{ \ip \varphi_{I},w_J, } {}\lesssim{} 
 2^{-\frac12 \abs  m}\left( 1+\frac{\text{dist}(I,J)}{\abs I+\abs J}\right) ^{-N+1},
 \end{equation*}
where $N $ is as in \eqref{e.adapted}.   This treats the case $m>0 $.  

The case $m<0 $ does not occur in the Haar setting.  While it does occur here, there is an 
extra decay coming from the fact that the 
wavelet has mean zero,  and are adapted to an interval of smaller length than $J $.  Thus,  we essentially 
gain a derivative in this case
 \begin{align*} 
 \abs{ \ip \varphi_{I},w_J, } {}\lesssim{}2^{-\frac32\abs m}. 
 \end{align*}
 Taking the geometric mean of these two estimates proves the estimate in this last case. 
 \end{proof}

 We use this lemma to prove another technical lemma, more specifically adapted to our purposes.  
 Let $\operatorname P _{2^m} $ be the wavelet projection of functions onto the span of $\{w _{J}\mid \abs J=2^m\} $. 
 that is 
 \begin{equation*}
 \operatorname P_{2^m} {}\eqdef{} \sum_{J\in\mathcal D:\ \abs J=2^m} w_J\otimes w_J,
 \end{equation*}
 so that 
 \begin{equation*}
 \operatorname P_{2^m}(f) = \sum_{J\in\mathcal D:\ \abs J=2^m}  \langle f , w_J\rangle w_J.
 \end{equation*}
 
 \begin{lemma}\label{l.movingprojection}
 We have the estimate 
 \begin{align*}
 \norm \operatorname B_m.\schatten p. &{}\lesssim{}2 ^{-\min(\frac12,\frac1p)\abs m}\norm \alpha(\cdot). \ell^p .,\qquad m\in \mathbb Z,
 \\[6pt]
 \text{where}\quad  \operatorname B_m &{}\eqdef{} \sum _{I\in\mathcal D }
 \alpha(I) [\operatorname P_{2^m \abs I}\varphi_I] \otimes  \phi_I . 
 \end{align*}
 In particular note that the scale of the wavelet projection being used depends upon the scale of $ I $. 
 \end{lemma}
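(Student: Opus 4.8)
The plan is to estimate the Schatten norm of $\operatorname B_m$ by comparing it to the model operator $\operatorname S_{|m|}$ from the Haar case, using the almost orthogonality bound of Lemma~\ref{l.coefficients}. The operator $\operatorname B_m$ differs from the model operator in two ways: the first factor $\operatorname P_{2^m|I|}\varphi_I$ is the projection of $\varphi_I$ onto wavelets at scale $2^m|I|$ (so the frequency gap is exactly $|m|$), and both factors are continuous wavelets rather than Haar functions. First I would expand $\operatorname P_{2^m|I|}\varphi_I = \sum_{J:\,|J|=2^m|I|}\ip\varphi_I,w_J,\,w_J$, so that
\begin{equation*}
\operatorname B_m = \sum_{I\in\mathcal D}\ \sum_{J:\,|J|=2^m|I|}\alpha(I)\,\ip\varphi_I,w_J,\ w_J\otimes\phi_I.
\end{equation*}
The key analytic input is that by Lemma~\ref{l.coefficients} the coefficients satisfy $|\ip\varphi_I,w_J,|\lesssim 2^{-\Delta(m)}(1+\operatorname{dist}(I,J)/(|I|+|J|))^{-\eta}$, which supplies both the decay factor $2^{-\Delta(m)}$ and a summable spatial tail.

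Next I would organize the sum by the pairs $(I,J)$ with $|J|=2^m|I|$ fixed and reduce to a single-scale problem. The point is that, just as in the Haar proof, for each fixed ratio of scales the operator is a sum of rank-one pieces $w_J\otimes\phi_I$ in which the $\phi_I$ are orthogonal across $I$ and the $w_J$ are orthogonal across $J$; the only coupling comes through the spatial separation $\operatorname{dist}(I,J)$. I would therefore split according to the dyadic annulus in which $\operatorname{dist}(I,J)/(|I|+|J|)$ lives, picking up the geometric factor from the tail $(1+\cdot)^{-\eta}$, and on each annulus apply Proposition~\ref{p.mn} together with the block-orthogonality to get a bound of the shape $2^{\delta(p)|m|}2^{-\Delta(m)}\norm\alpha(\cdot).\ell^p.$ per annulus, summed against the decay in the separation. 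Here the factor $2^{\delta(p)|m|}$ is exactly the loss incurred in passing from the $\ell^2$ norm of a block of $2^{|m|}$ coefficients to the $\ell^p$ norm, as in \eqref{e.HmJ}, and it is the analogue of the bound \eqref{e.Sm<} from the Haar case.

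The final arithmetic is to combine the two exponential contributions. Writing $\delta(p)=\max(0,\tfrac12-\tfrac1p)$ and $\Delta(m)=|m|$ for $m\le 0$, $\Delta(m)=\tfrac12 m$ for $m>0$, one checks that
\begin{equation*}
2^{\delta(p)|m|}\,2^{-\Delta(m)}\lesssim 2^{-\min(\frac12,\frac1p)|m|}
\end{equation*}
in both sign regimes, which is precisely the claimed decay rate. For $m>0$ the gain $2^{-m/2}$ beats the loss $2^{\delta(p)m}$ with margin $2^{-\min(1/2,1/p)m}$; for $m\le 0$ the stronger gain $2^{-|m|}$ from the mean-zero/regularity bound absorbs the same loss. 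The main obstacle I anticipate is the bookkeeping for $0<p<1$: there the triangle inequality fails and one must use the $p$-subadditivity \eqref{e.schattentrianglep} when summing the annular contributions, which means I should arrange the decomposition so that each annulus contributes a clean $\mathbb S^p$ quasi-norm bound and the geometric series in the separation still converges after raising to the power $p$ (requiring $\eta p$ large, guaranteed by choosing $N$ large in \eqref{e.adapted}). Apart from this, the argument is a direct continuous analogue of the Haar computation, with Lemma~\ref{l.coefficients} replacing the exact expansion \eqref{e.expandindicator}.
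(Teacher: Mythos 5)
Your overall architecture --- expand $\operatorname P_{2^m\abs I}\varphi_I$ in the wavelet basis, feed in the almost--orthogonality bound of Lemma~\ref{l.coefficients}, decompose spatially into dyadic annuli, treat each annulus as a sum of orthogonal finite blocks, and resum using $p$-subadditivity when $p<1$ --- is exactly the paper's, and your treatment of the case $m>0$ (grouping by $J$, a column of $2^{m}$ genuinely distinct coefficients $\alpha(I)$, loss $2^{m\delta(p)}$ from comparing $\ell^2$ to $\ell^p$, net decay $2^{-m(\frac12-\delta(p))}=2^{-m\min(\frac12,\frac1p)}$) coincides with the paper's computation.

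The step that does not survive scrutiny is the case $m<0$ when $p<2$. There the natural blocks are indexed by $I$: each block is $\sum_{J}\alpha(I)\langle\varphi_I,w_J\rangle\, w_J\otimes\phi_I$ with $J$ running over the roughly $2^{\abs m}$ wavelets of scale $2^{m}\abs I$ in a fixed annulus, and all $2^{\abs m}$ entries carry the \emph{same} coefficient $\alpha(I)$. If you bound such a block via Proposition~\ref{p.mn}, i.e.\ by $(2^{\abs m})^{\delta(p)}$ times the $\ell^p$ norm of its entries, the multiplicity in $J$ produces an extra factor $2^{\abs m/p}$, and the per-annulus bound becomes $2^{\abs m(\delta(p)+\frac1p-1)}\lVert\alpha\rVert_{\ell^p}=2^{-\abs m(1-\max(\frac12,\frac1p))}\lVert\alpha\rVert_{\ell^p}$; this is weaker than the required $2^{-\abs m/2}$ for $1<p<2$, gives no decay at $p=1$, and actually grows for $p<1$. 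So your claimed per-annulus bound $2^{\delta(p)\abs m}2^{-\Delta(m)}\lVert\alpha\rVert_{\ell^p}$ is not what this accounting yields for $m\le 0$. The repair is the device the paper uses: each $m<0$ block is rank one, so its $\schatten p$ norm equals the $L^2$ norm of $\sum_J\alpha(I)\langle\varphi_I,w_J\rangle w_J$, which by Lemma~\ref{l.coefficients} is $\lesssim \abs{\alpha(I)}\,2^{-\abs m}\,(2^{\abs m})^{1/2}=\abs{\alpha(I)}2^{-\abs m/2}$ times the annular decay; only after this exact $\ell^2$ (Frobenius) computation does one take the $\ell^p$ sum over blocks, one term per $I$, giving $2^{-\abs m/2}\lVert\alpha\rVert_{\ell^p}\le 2^{-\min(\frac12,\frac1p)\abs m}\lVert\alpha\rVert_{\ell^p}$. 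In short: reserve the $\ell^2$-to-$\ell^p$ loss $2^{\delta(p)\abs m}$ for the $m>0$ blocks, where the coefficients are genuinely distinct, and never pass to the $\ell^p$ norm of the matrix entries inside an $m<0$ block.
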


 \begin{proof}
 Consider first the case of $m<0 $. 
 Now, for an integer $\ell \ge0$, set 
 \begin{align*}
 \mathcal D_0(I)& {}\eqdef{}\{ J\in\mathcal D\mid \abs J=2^m\abs I,\ J\subset I\},\\ 
 \mathcal D_\ell(I) &{}\eqdef{}\{ J\in\mathcal D\mid \abs J=2^m\abs I,\ J\subset  2^\ell I,\ J\not\subset2 ^{\ell-1 }I\}. 
 \end{align*}
 Also consider the functions  
 \begin{equation*}
 W _{I,\ell} {}\eqdef{}\sum _{J\in\mathcal D_\ell(I) } \alpha(I) \ip \varphi _{I},w_J, w_J .
 \end{equation*}
 For fixed $\ell $, the functions $\{W _{I,\ell }\mid I\in\mathcal D \} $ are orthogonal.  Hence the operator 
 \begin{equation*}
 \operatorname T_\ell  {}\eqdef{}\sum _{I\in\mathcal D }    W _{I,\ell}\otimes \phi_I  
 \end{equation*}
 is in singular value form.   Moreover, observe that from Lemma \ref{l.coefficients},
 \begin{align*}
 \norm W _{I,\ell}.2.&{}\le{} \left[ \sum _{J\in\mathcal D_\ell (I)} \abs {\alpha(I) \ip \varphi _{I},w_J, }^2\right] ^{1/2 }
\lesssim
 2^{-|m|-\eta'\ell }  \left[ \sum _{J\in\mathcal D_\ell(I) } \abs {\alpha(I) }^2 \right] ^{1/2 }. 
 \end{align*}
 An elementary estimate gives 
\begin{equation}  
\label{e.l2lp}
\left[ \sum _{J\in\mathcal D_\ell(I) } \abs {\alpha(I) }^2 \right] ^{1/2 } {}\lesssim{}\text{card}(\mathcal D_\ell(I))^{\delta(p)} \left[ \sum _{J\in\mathcal D_\ell(I) } \abs {\alpha(I) }^p \right] ^{1/p }    .
\end{equation}
 The term in the exponent is $\delta(p)=\max\left(0,\frac12-\frac1p\right) $ as before.

 Hence, we can easily estimate the Schatten norm of $\operatorname T_\ell $. 
 \begin{align*}
 \norm \operatorname T _\ell .\schatten p.^p 
 {}\le{} \sum _{I\in\mathcal D}  \norm W _{I,\ell }.2.^p 
 {}\lesssim{} 2^{-p(1-\delta(p))|m|-\eta''\ell } \norm  \alpha(\cdot) .\ell^p .^p . 
 \end{align*}
 Because we are free to take $\eta''{}$ as large as needed, this completes the proof in this case.

 \medskip 
 
 We now consider the case of $m>0 $.  
 Keeping the same notation $\mathcal D_\ell(J) $, we redefine 
 \begin{align*}
 W _{J,\ell} &{}\eqdef{}\sum _{I\in\mathcal D_\ell(J)}  \alpha(I)\ip \varphi_I,w_J , w_J, 
 \\ 
 \operatorname T _\ell & {}\eqdef{} \sum _{J\in\mathcal D }w_J\otimes  W _{J,\ell } .
 \end{align*}
 Again, this is an operator in singular value  form.  In particular 
 \begin{align*}
 \norm W _{J,\ell }.2.^2&{}={} 
 		\sum _{I\in\mathcal D_\ell(J)} \abs{\alpha(I)\ip \varphi_I,w_J ,}^2
 \\& {}\lesssim{} 2 ^{-m -\eta \ell }\sum _{I\in\mathcal D(J)} \abs{\alpha(I)}^2
 \\& {}\lesssim{} 2 ^{-m(1-2 \delta(p))-\eta \ell }\Biggl[\sum _{I\in\mathcal D(J)} \abs{\alpha(I)}^p
  \Biggr] ^{2/p}.
 \end{align*}
 Therefore, it is the case 
 \begin{equation*}
 \norm \operatorname T _{\ell }.\schatten p .^p  
 {}\lesssim{} 2 ^{-mp\min\left(\frac12,\frac1p\right)-\eta \ell} \norm \alpha(\cdot).\ell^p.^{p}.
\end{equation*}
 Due to the fact that $\eta $ can be taken very large, for all $0<p<\infty $, 
 this estimate can be summed over $\ell$, to prove the lemma in this case.  The proof of Lemma \ref{l.movingprojection} is complete.
 \end{proof} 
 
 \begin{proof}[Proof of Lemma~\ref{l.gettingridofzeros}.]  
 We assume that $\{\phi_I \} $ are adapted to $\mathcal D $.  Then, we have 
 \begin{align*}
 \sum _{I\in\mathcal D}\alpha(I)\,\varphi_I\otimes \phi_I {}& =  \sum _{I\in\mathcal D}\alpha(I)\, \sum _{J\in\mathcal D} \ip \varphi_I,w_J,\, w_J  \otimes \phi_I 
\\
& = \sum_{m\in\mathbb Z} \sum _{I\in\mathcal D}\alpha(I)\, \sum _{J\in\mathcal D:\ \abs J=2^m} \ip \varphi_I,w_J,\, w_J  \otimes \phi_I  \\
& = \sum_{m\in\mathbb Z} \sum _{I\in\mathcal D }
 \alpha(I) \big[\operatorname P_{2^m \abs I}\varphi_I\big] \otimes  \phi_I
   \\
& ={}  \sum _{m\in \mathbb Z}\operatorname B_m.
 \end{align*}

 With the estimates on the operators $\operatorname B_m$ provided by Lemma~\ref{l.movingprojection}, we obtain that 
 $$\norm \sum _{I\in\mathcal D} \alpha(I) \varphi_I\otimes \phi_I .\schatten p. {}\lesssim{}\norm \alpha(\cdot) .\ell^p.
$$ 
for $0<p<\infty$, which shows that \eqref{e.gettingridofzeros}  holds.
  
 \medskip 
  We turn to the proof of the reverse inequality, assuming that $\{\phi_I\} $ are uniformly adapted to $\mathcal D $. We aim to prove that 
  \begin{equation} 
\label{e.gettingridofzeros reverse} 
\norm \sum _{I\in\mathcal D} \alpha(I) \varphi_I\otimes \phi_I .\schatten p. {}\gtrsim{}\norm \alpha(\cdot) .\ell^p.,\qquad 0<p<\infty.
\end{equation}

  This argument is modeled on the proof of the lower bound in the Haar setting. 
  Recall that this means that \eqref{e.uniformlyadapted} is in force. 
  Fix a dyadic interval $I_0 $ with length 1, so that 
  \begin{equation*}
  \abs{\ip \phi _{[0,1]},w _{I_0}, } 
  \end{equation*}
  is maximal.  Write $I<J $ if the (orientation preserving) linear transformation that carries $J $ to $[0,1] $ 
  also carries $I $ to $I_0$. (In the Haar case, $I_0 $ was the parent of $[0,1] $.) 
  
  The notion of \emph{scales separated by $\ell$} is modified slightly. 
 A collection of dyadic intervals 
 $\mathcal D_\ell $ will \emph{have scales separated by $\ell $ } if 
  it satisfies the conditions:  ``$I\in \mathcal D_\ell  $ implies $I\pm\ell\abs I \in \mathcal D_\ell $, but $I\pm j\abs I \not\in \mathcal D_\ell $ 
  for $\abs j<\ell $ and  $\{\log_2 \abs 
  I\mid I\in\mathcal D_\ell\}  {}={} a+\ell \mathbb Z $ for some choice of integer $a $''. 
  
 All dyadic intervals $\mathcal D $ are a union of $\ell ^2 $ subcollections with scales separated by $\ell $.  
  We expect a loss of $\ell ^{-2} $ by passing to a subcollection with scales separated by $\ell $.  We will 
  be able to pick up rapid (but not exponential) decay from other parts of the argument. 
 
 Fix $\alpha(\cdot) $ such that $\norm \alpha(\cdot).p.=1 $. 
 We want to provide a lower bound on 
 \begin{equation*}
 \operatorname T_{\alpha}\eqdef{}\sum _{I\in\mathcal D}\alpha(I)\, \phi_I\otimes w_I. 
 \end{equation*}
 For a choice of $\ell$ to be specified, we can choose $\mathcal D _{\ell}$ with scales separated by $\ell$ such that 
 \begin{equation*}
\bigg( \sum _{I\in\mathcal D_\ell}\abs{\alpha(I)}^p\bigg)^{1\over p}\ge{} \ell ^{-2}.
 \end{equation*}
  Let $\operatorname P_\ell $ be the projection onto the span of $\{w _{I}\mid I\in\mathcal D_\ell \} $, that is 
 \begin{equation*}
 \operatorname P_{\ell} {}\eqdef{} \sum_{ I\in \mathcal D_\ell} w_I\otimes w_I . 
 \end{equation*}
  Let $\mathcal D_{\ell,<} $ be those dyadic intervals $I $ which satisfy $I<J $ for some $J\in\mathcal D_\ell $.  
  We will pick $\ell $ so large that there is a unique such $J $. 
  Let $\operatorname P_{<,\ell} $ be the corresponding wavelet projection onto the span of $\{w _{I_<}\mid I_<\in\mathcal D_{\ell,<}\} $, defined similarly to $P_\ell$ as above.
  
By Proposition~\ref{p.ortholower}, we have the estimate 
\begin{equation*}
\norm \operatorname T_{\alpha}.\schatten p.\ge{} \norm  \operatorname P_{<,\ell} \operatorname T_{\alpha}\operatorname P_\ell .\schatten p. .
\end{equation*}
We estimate the norm of the latter quantity.   By definition, we have
$$\operatorname P_{<,\ell} \operatorname T_{\alpha}\operatorname P_\ell = 
\sum_{ I\in \mathcal D_{\ell,<}} \sum_{ I'\in \mathcal D_{\ell} }\alpha(I')
\ip \phi_{I'},w _{I_<},\, w _{I_<}\otimes w_{I'}.  $$
 
  Set
  \begin{equation*}
  \operatorname T_{\alpha}^1\eqdef{} \sum_{I\in\mathcal D_\ell} \alpha(I) \ip \phi_I,w _{I_<},\, w _{I_<}\otimes w_I .
  \end{equation*}
  Here, by $I_<$ we mean that unique element of $\mathcal D_{\ell,<} $ for which $I_<<I $.  Observe that 
  $\ip \phi_I,w _{I_<}, $ is in fact independent of $I$, and so we have the estimate 
  \begin{equation}\label{essential term large}
  \norm \operatorname T_{\alpha}^1 .\schatten p.{}\gtrsim{}\ell ^{-2},
  \end{equation}
  with the implied constant depending only on the specific choice of $\phi$ and wavelet basis $\{w_I\} $. 
  
  The remainder of the argument consists of showing that 
  \begin{equation}\label{remainder small}
  \norm  \operatorname P_{<,\ell} \operatorname T_{\alpha}\operatorname P_\ell- \operatorname T^1_{\alpha}.\schatten p. {}\lesssim{} \ell ^{-4},
  \end{equation}
  where the implied constant depends on $0<p<\infty $, and on the specific constants that enter into 
  the inequality \eqref{e.adapted}  (in particular, for the case of $0<p<1 $, we will need to require that 
  $Np>5 $).  
    
 To see  \eqref{remainder small},   note that the remainder $ \operatorname P_{<,\ell} \operatorname T_{\alpha}\operatorname P_\ell- \operatorname T^1_{\alpha}$ can be represented and split as follows.
Set 
  \begin{align*}
 \mathcal D_{\ell,j}(I') &{}\eqdef{}\{ I\in\mathcal D_{\ell,<}\mid \abs  I\subset  2^j I',\ I\not\subset2 ^{j-1 }I'\},\quad j>1. 
 \end{align*}
 And we  consider the functions  
 \begin{equation*}
 W _{I',j} {}\eqdef{}\sum _{I\in\mathcal D_{\ell,j}(I') } \alpha(I') \ip \phi_{I'},w _{I_<},\, w _{I_<}.
 \end{equation*}
Then we have
 \begin{align*}
 \operatorname P_{<,\ell} \operatorname T_{\alpha}\operatorname P_\ell- \operatorname T^1_{\alpha}&= 
\  \sum_{ I'\in \mathcal D_{\ell} } \sum_{ I\in \mathcal D_{\ell,<},  I\not=I'} \alpha(I')
\ip \phi_{I'},w _{I_<},\, w _{I_<}\otimes w_{I'}\\
&=\sum_{j=2}^\infty\sum_{ I'\in \mathcal D_{\ell} }   W _{I',j}\otimes w_{I'}\\
&\hskip-.1cm \eqdef\sum_{j=2}^\infty T_{\ell,j}.
  \end{align*}
  Note that for each $j$,  there are only $2j$ cases of $I\in \mathcal D_{\ell,j}(I')$.  Hence, by using the almost orthogonality estimate in Lemma \ref{l.coefficients} for $\ip \phi_{I'},w _{I_<},$, and following a similar step in the proof of Lemma \ref{l.movingprojection} for $ \|W _{I',j}\|_2 $, we obtain that 
$\norm  T_{\ell,j}.\schatten p.\lesssim 2^{-\eta j}2^{-\eta \ell}$, where $\eta$ is a large positive number as in Lemma \ref{l.coefficients}. Thus, summing over all $j\geq2$, we obtain that 
$  \norm  \operatorname P_{<,\ell} \operatorname T_{\alpha}\operatorname P_\ell- \operatorname T^1_{\alpha}.\schatten p. {}\lesssim{} 2 ^{-\eta \ell},
$
that is,  \eqref{remainder small} holds.

  Now, it is clear that we can choose $\ell$ sufficiently large, and then combine \eqref{essential term large} with \eqref{remainder small} to obtain that 
  \eqref{e.gettingridofzeros reverse}  holds.  
The proof  of the Wavelet Case in Theorem~\ref{t.withoutbesov} is complete.
 \end{proof}

   %%%%%%%%%%%%%%%%%%%%%%%%%%
 \section{The Proof of the Multi-Parameter Result}
 \label{s.two-parameter}
 
\subsection{Two-Parameter Paraproducts}
We now consider paraproducts formed over sums of dyadic rectangles in the plane.  The class of paraproducts is then invariant under a two parameter family of dilations, a situation that we refer to as one of ``two parameters''.   This case contains all the essential difficulties for the higher parameter setting and is good for focusing ideas.  Again, there is very little function theory in the argument and Theorems \ref{t.two-schatten} and \ref{t.dyadicMain} in the two parameter setting can be rephrased as

 %%%%%%%%%%%%%%%%%%%%%%%%%%%%%%%%%%%%%
 
 \begin{theorem}
 \label{t.two-withoutbesov}
Let $\{\phi_R\mid \mathcal R \} $ and $\{\varphi_R\mid \mathcal R \} $ be collections of functions adapted to $\mathcal R $, with at least one collection having  zeros in the $j $th coordinate for $j=1,2 $.  Then we have the inequality 
\begin{equation}
\label{e.two-withoutbesov} 
\norm \sum _{R\in\mathcal R} \alpha(R)\, \varphi_R\otimes \phi_R . \schatten p .  {}\lesssim{}\norm \alpha(\cdot ).  \ell ^p. . 
\end{equation}

\noindent In the Haar case, we have for $\epsilon,\delta\in\{0,1\}^2 $, with the same assumption on zeros, that the estimate holds
\begin{equation}  
\label{e.two-Haarwithoutbesov} 
\norm \sum _{R\in\mathcal R} \alpha(R)\,  h^\epsilon_R \otimes h^\delta_R. \schatten p .  {}\simeq{} \norm \alpha(\cdot ) . \ell ^p . .
\end{equation}
\end{theorem}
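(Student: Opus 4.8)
The plan is to run the one-parameter argument of Section~\ref{s.one-dimensional} separately in each coordinate and to absorb the coupling produced by $\alpha(R_1,R_2)$ through the matrix estimate of Proposition~\ref{p.mn}. I treat the Haar identity \eqref{e.two-Haarwithoutbesov} first, as it is the model. Write $\epsilon=(\epsilon_1,\epsilon_2)$, $\delta=(\delta_1,\delta_2)$ and set $S_\varphi=\{j:\epsilon_j=1\}$, $S_\phi=\{j:\delta_j=1\}$; the admissibility hypothesis that no coordinate has $\epsilon_j=\delta_j=1$ is exactly $S_\varphi\cap S_\phi=\varnothing$. For each $j\in S_\varphi$ I expand the input factor $h^1_{R_j}$ into its ancestors via \eqref{e.expandindicator}, and for each $j\in S_\phi$ I expand the output factor the same way, leaving the coordinates in neither set untouched. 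Collecting terms by the scale gaps $m_j=\log_2(|J_j|/|R_j|)\ge1$ gives
\[
\sum_{R\in\mathcal R}\alpha(R)\,h^\epsilon_R\otimes h^\delta_R
=\sum_{m_1,m_2\ge1}2^{-(m_1+m_2)/2}\,\operatorname S_{m_1,m_2},
\]
where $\operatorname S_{m_1,m_2}$ is assembled from the ancestor Haar functions on the sides prescribed by $S_\varphi$ and $S_\phi$ (a coordinate outside $S_\varphi\cup S_\phi$ contributes neither a weight nor a sum).

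The decisive feature is the block structure of $\operatorname S_{m_1,m_2}$. Grouping its terms by the pair of ancestors $(J_1,J_2)$, the inputs and outputs attached to different groups are built from mutually orthogonal Haar functions, because $\{h^0_I\}$ is orthonormal and distinct intervals at a fixed scale have disjoint families of $m$-fold children (see \eqref{e.dyadicJ}). Hence $\operatorname S_{m_1,m_2}$ is an orthogonal direct sum of its blocks $\operatorname S_{m_1,m_2}^{J_1,J_2}$, and its singular values are the disjoint union of those of the blocks, so that $\norm{\operatorname S_{m_1,m_2}}.\schatten p.^p=\sum_{J_1,J_2}\norm{\operatorname S_{m_1,m_2}^{J_1,J_2}}.\schatten p.^p$ for every $0<p<\infty$. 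In the genuinely mixed case $S_\varphi=\{1\}$, $S_\phi=\{2\}$ the block indexed by $(J_1,J_2)$ is, relative to the orthonormal input family $\{h^0_{J_1}\otimes h^0_{R_2}\}_{R_2}$ and output family $\{h^0_{R_1}\otimes h^0_{J_2}\}_{R_1}$, precisely the $2^{m_1}\times2^{m_2}$ matrix $(\alpha(R_1,R_2)\nu_{R_1,J_1}\nu_{R_2,J_2})$; in the remaining cases the block is rank one as in Section~\ref{s.one-dimensional}. Applying Proposition~\ref{p.mn} to each block and summing over $(J_1,J_2)$, which reconstitutes the full $\ell^p$ sum of $\alpha$, yields
\[
\norm{\operatorname S_{m_1,m_2}}.\schatten p.\lesssim 2^{\delta(p)(m_1+m_2)}\,\norm{\alpha(\cdot)}.\ell^p.,\qquad \delta(p)=\max(0,\tfrac12-\tfrac1p).
\]
Summing in $m_1,m_2$ against the weights $2^{-(m_1+m_2)/2}$, with the triangle inequality \eqref{e.schattentriangle} for $1\le p<\infty$ and the quasi-triangle inequality \eqref{e.schattentrianglep} for $0<p<1$, closes the upper bound in \eqref{e.two-Haarwithoutbesov} since $\tfrac12-\delta(p)=\min(\tfrac12,\tfrac1p)>0$. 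For the matching lower bound I tensorize the one-parameter lower bound: separate scales in both coordinates, pass through Proposition~\ref{p.ortholower} to the projection onto the ancestors in the expanded coordinates, and isolate the exactly diagonal main term, which is in singular value form and so has Schatten norm equal to the $\ell^p$-norm of the retained coefficients; the off-diagonal remainder is dominated by the same geometric decay, and a fixed choice of the scale-separation parameter then yields the reverse inequality in \eqref{e.two-Haarwithoutbesov}.

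For the adapted inequality \eqref{e.two-withoutbesov} I tensorize Lemmas~\ref{l.gettingridofzeros} and~\ref{l.movingprojection}. Expanding the input $\varphi_R=\varphi_{1,R_1}\otimes\varphi_{2,R_2}$ in a product wavelet basis and grouping by the scale indices $m_1,m_2\in\mathbb Z$ of the coordinatewise wavelet projections $\operatorname P^{(j)}_{2^{m_j}|R_j|}$ produces the two-parameter analogues $\operatorname B_{m_1,m_2}$ of the operators in Lemma~\ref{l.movingprojection}. The bound
\[
\norm{\operatorname B_{m_1,m_2}}.\schatten p.\lesssim 2^{-\min(1/2,1/p)(|m_1|+|m_2|)}\,\norm{\alpha(\cdot)}.\ell^p.,\qquad m_1,m_2\in\mathbb Z,
\]
comes from repeating the proof of Lemma~\ref{l.movingprojection} in each variable: one splits further by the spatial distance sets $\mathcal D_{\ell_j}(\cdot)$, uses the coordinatewise decay of Lemma~\ref{l.coefficients} to control the $L^2$ masses, and invokes the characterizations \eqref{e.sch-inf}--\eqref{e.sch-sup} together with Proposition~\ref{p.mn} exactly as before. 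Here the blocks obtained after grouping by the two wavelet scales are rank one, so no rectangular matrices appear; the only price relative to $p\le2$ is the factor from Proposition~\ref{p.mn}, again beaten by the geometric weights. Summing the resulting doubly geometric series in $m_1,m_2$ gives \eqref{e.two-withoutbesov}.

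The main obstacle is the intrinsically multi-parameter mixed configuration, where $\{\varphi_R\}$ is cancellative in one coordinate and $\{\phi_R\}$ in the complementary one. Because the Haar function $h^1$ is not smooth, it cannot be reabsorbed by Lemma~\ref{l.coefficients} and must be expanded through \eqref{e.expandindicator} on whichever side it sits; consequently the grouped operator $\operatorname S_{m_1,m_2}$ is no longer a sum of rank-one pieces but an orthogonal direct sum of rectangular $2^{m_1}\times2^{m_2}$ blocks. The clean ``orthonormal input, orthogonal output'' picture of the one-parameter case is then unavailable and one is forced onto Proposition~\ref{p.mn}; this is the precise point at which the loss $2^{\delta(p)(m_1+m_2)}$ is generated and must be defeated by the $2^{-(m_1+m_2)/2}$ decay, which it is for all $0<p<\infty$. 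Verifying that these blocks are genuinely orthogonal across $(J_1,J_2)$, so that their Schatten norms aggregate in $\ell^p$, is the one place where the orthonormality of the Haar system and the disjointness of children at a fixed scale must be used with care, and it is what makes the Haar model the correct starting point before smoothing out to the wavelet setting.
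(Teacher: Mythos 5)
Your proposal follows essentially the same route as the paper: expand the non-cancellative Haar factors through their ancestors coordinate by coordinate, organize by the scale gaps $(m_1,m_2)$, control the genuinely mixed configuration via the rectangular-matrix bound of Proposition~\ref{p.mn} applied to the orthogonal blocks $\operatorname A_{m,S}$, beat the resulting $2^{\delta(p)(m_1+m_2)}$ loss with the $2^{-(m_1+m_2)/2}$ weights, and obtain the lower bound by scale separation in both coordinates together with Proposition~\ref{p.ortholower}; the wavelet case is likewise handled by tensorizing Lemmas~\ref{l.coefficients} and~\ref{l.movingprojection}. The only difference is organizational — you treat all admissible $(\epsilon,\delta)$ uniformly and make explicit the $\ell^p$ aggregation of block Schatten norms that the paper leaves implicit — so this is correct and matches the paper's argument.
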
 
 
\subsection{The Proof of Theorem \ref{t.two-withoutbesov}: The Haar Case} 
\subsubsection{The Case of Two Parameters}
Recall that as in \eqref{e.Haar-nd-dim}, for a dyadic rectangle $R=R_1\times R_2 $, we set $h_R(x_1,x_2)=h^0_{R_1}(x_1) h^0_{R_2}(x_2)$.  

We of course immediately have the inequality 
\begin{equation*}
\norm \sum _{R\in\mathcal R}  \alpha(R)\,  h_R \otimes h_R. \schatten p .  {}\simeq{} \norm\alpha(\cdot ) . \ell ^p ..
\end{equation*}
  And, keeping in mind the proof in one parameter, we need a certain extension of this inequality.

To set some notation to capture the role of zeros, or their absence, we set 
\begin{equation*}
h_R^{(\epsilon_1,\epsilon_2)}(x_1,x_2)=h_{R_1}^{\epsilon_1 }(x_1) h_{R_2}^ {\epsilon_2 }(x_2) . 
\end{equation*}

We discuss the equivalence \eqref{e.two-Haarwithoutbesov}.  There are three possible forms of the paraproduct, after taking duality and permutation of coordinates into account.  Of these, the first and simplest case is 
\begin{equation*}
\sum _{R\in\mathcal R} \alpha(R)\, h^{(1,0)}_R \otimes h_R .
\end{equation*}
This is very clearly a simple variant of the one parameter version, and we do not discuss it. 
   
The second case is  that  $\epsilon=(1,1) $ and $\delta=(0,0)$, i.e.,
\begin{equation*}
\sum _{R\in\mathcal R} \alpha(R)\, h^{(1,1)}_R \otimes h_R .
\end{equation*}
It is clear that is one function is a Haar function, and the other is a  normalized  indicator function.  This case is the most natural analog of the  one parameter case.

 From \eqref{e.expandindicator} we see that for $j=1,2$,
 \begin{equation} \label{e.expandindicator product}
  h^1_{R_j}=\sum _{S_j:\ R_j\varsubsetneqq S_j } \nu_{R_j,S_j} \sqrt{\tfrac{\displaystyle \abs{R_j}}{\displaystyle \abs {S_j}} }   \, h_{S_j}^0.
  \end{equation}
 
Define, for a multi-integer $m=(m_1,m_2)\in \mathbb N^2 $, the collections 
\begin{equation*}
\mathcal R(m,S)=\{R\in\mathcal R\mid R\subset S,\ 2^{m_j} \abs{R_j}=\abs {S_j},\ j=1,2 \}. 
\end{equation*}
There is a corresponding operator $\operatorname A_m$ defined as follows.
\begin{equation} 
\label{e.Adef} 
\operatorname A_m  {}\eqdef{}\sum _{S\in\mathcal R} h_S\otimes H_{m,S},
\end{equation}
where
\begin{equation} 
\label{e.Hdef} 
H_{m,S}  {}\eqdef{} \sum _{R\in\mathcal R(m,S) } \nu_{R,S} \,\alpha(R) h_R . 
\end{equation}
Here, $\nu_{R,S}=\nu_{R_1,S_1}\cdot\nu_{R_2,S_2}$.  Observe that, just as in \eqref{e.Sm<}, we have the estimate 
\begin{equation} \label{e.Am<}
\norm \operatorname A_m.\schatten p.  {}\lesssim{} 2 ^{\delta(p)(m_1+m_2)} \norm \alpha(\cdot) .\ell^p. .
\end{equation}
Here, $\delta(p)=\max\left(0,\frac12-\frac1p\right)$ as before.  This follows just as before, namely observe that the functions $H_{m,S} $ are orthogonal in $S $, and that 
\begin{align*}
\norm H_{m,S}.2. & {}=\left[ \sum _{R\in\mathcal R(m,S) } \abs {\alpha(R)}^2\right]^{1/2} 
{}\le{} 2^{\delta(p)(m_1+m_2)} \left[ \sum _{R\in\mathcal R(m,S) } \abs {\alpha(R)}^p\right]^{1/p}.
\end{align*}

In addition, by \eqref{e.expandindicator product}, we have
\begin{equation*}
\sum _{R\in\mathcal R} \alpha(R)\, h^{(1,1)}_R \otimes h_R  {}={} 
\sum _{m\in \mathbb N^2 } 2 ^{-\frac12(m_1+m_2) } \operatorname A_m 
\end{equation*}
for appropriate choices of signs in the definition of $\operatorname A_m $.  The remainder of the proof is just as in the one parameter case.  For $1\le{}p<\infty $, the Schatten norm obeys the triangle inequality, hence, together with \eqref{e.Am<}, we can estimate 
 \begin{equation} \label{e.normdecay product}
 \begin{split}
 \norm \sum _{R\in\mathcal R} \alpha(R) h^{(1,1)}_R\otimes h_R . \schatten p.&{}\le{}
 {} \sum _{m_1=1}^\infty\sum _{m_2=1}^\infty 2 ^{-\frac12(m_1+m_2) } \norm \operatorname A_m .\schatten p . 
 \\ & \le{} \norm \alpha(\cdot).\ell^p . \sum _{m_1=1}^\infty 2^{-m_1 \min\left(\frac12,\frac1p\right)} \sum _{m_2=1}^\infty 2^{-m_2 \min\left(\frac12,\frac1p\right)} 
 \\ & {}\lesssim{}\norm \alpha(\cdot).\ell^p . .
 \end{split}
 \end{equation}
 In the case that $0<p<1 $, we can rely upon the subadditivity as in \eqref{e.schattentrianglep}, and a very similar 
 argument finishes the proof of the upper bound as above.

%%%%%%%%%%%%%%%%%%%%%%%%%%

 We turn to the case with no proper analog in the one parameter setting, namely 
 \begin{equation}\label{T alpha product}
T_\alpha:= \sum _{R\in\mathcal R} \alpha(R)\, h^{(1,0) }_R\otimes h^{(0,1)}_R .
 \end{equation}
  
 Of course we want to apply \eqref{e.expandindicator} to the first function in the first coordinate, and the 
 second function in the second coordinate.   Doing so suggests these definitions.  
 For $m\in \mathbb N^2 $, and $S\in\mathcal R $, 
 \begin{align}\label{R m S}
 \mathcal R(m,S)&{} {}\eqdef{}\{ R\in\mathcal R\mid R\subset S,\ 2 ^{m_j}\abs {R_j}=\abs {S_j},\ j=1,2\},      
 \\[9pt]
 \operatorname A_m & {}\eqdef{}\sum _{S\in\mathcal R}  \operatorname A _{m,S},\label{A m}
 \end{align}
where
 \begin{align}\label{A m S}
 \operatorname A_{m,S} & {}\eqdef{}	\sum _{R\in\mathcal R(m,S) }	\nu_{R,S} \, \alpha(R) \,  h^{0 }_{S_1}(x_1)h ^{0 }_{R_2 }(x_2)
   \cdot h^{0 }_{R_1}(y_1)h^{0 } _{S_2 }(y_2).
 \end{align}
 Here, $\nu_{R,S} $ are appropriate choices of signs. 
 
  The principle observation is that Proposition~\ref{p.mn} applies to the operators $\operatorname A _{m,S} $, giving 
  the estimate 
  \begin{equation*}
  \norm \operatorname A_{m,S} .2 .  {}\lesssim{} 2^{\delta(p)(m_1+m_2)} 
    \left[ \sum _{R\in\mathcal R(m,S) } \abs{ \alpha(R) }^p \right]^{1/p} . 
  \end{equation*}
 It is easy to see that we then have 
 \begin{equation*}
 \norm \operatorname A_m .\schatten p. {}\lesssim{} 2^{\delta(p)(m_1+m_2)}  \norm \alpha(\cdot) .p..
 \end{equation*}

Finally,  we have 
\begin{align*}
&\sum _{R\in\mathcal R} \alpha(R)\, h^{(1,0) }_R(x_1,x_2)\otimes h^{(0,1)}_R(y_1,y_2) \\
&=\sum _{\substack{R=R_1\times R_2\\ R_1,R_2\in\mathcal D}} \alpha(R) \bigg[\sum _{S_1:\ R_1\varsubsetneqq S_1} \nu_{R_1,S_1} \sqrt{\tfrac{\displaystyle \abs{R_1}}{\displaystyle \abs {S_1}} }   \, h_{S_1}^0(x_1)\bigg] h^{0 }_{R_2}(x_2)\, h^{0 }_{R_1}(y_1) \\
&\qquad\times\bigg[\sum _{S_2:\ R_2\varsubsetneqq S_2} \nu_{R_2,S_2} \sqrt{\tfrac{\displaystyle \abs{R_2}}{\displaystyle \abs {S_2}} }   \, h_{S_2}^0(y_2)\bigg]  \\
&=\sum _{\substack{S=S_1\times S_2\\ S_1,S_2\in\mathcal D}} \sum _{\substack{R_1\subsetneqq S_1\\ R_2\subsetneqq S_2\\ R_1,R_2\in\mathcal D}}  \alpha(R) \nu_{R,S}\ h^{0 }_{R_1}(y_1) h^{0 }_{R_2}(x_2)\,  \ \bigg[  \sqrt{\tfrac{\displaystyle \abs{R_1}}{\displaystyle \abs {S_1}} }   \, h_{S_1}^0(x_1)\bigg] \bigg[ \sqrt{\tfrac{\displaystyle \abs{R_2}}{\displaystyle \abs {S_2}} }   \, h_{S_2}^0(y_2)\bigg]  \\
&= \sum _{m\in \mathbb N^2 } 2 ^{-\frac12 (m_1+m_2) } \operatorname A_m .
\end{align*}
The conclusion of this case is just as that of \eqref{e.normdecay product}.  Thus, we obtain that
$$ \norm T_\alpha. \schatten p .  {}\lesssim{} \norm \alpha(\cdot ) . \ell ^p ., \qquad 0<p<\infty,
 $$
 where $T_\alpha$ is as defined in \eqref{T alpha product}.

Combining all these cases above, we obtain that for $(\epsilon,\delta) = ((0,0),(0,0))$, 
$((1,1),(0,0))$, $((0,0),(1,1))$, $((1,0),(0,1))$ and $((0,1),(1,0))$
 $$
 \norm \sum _{R\in\mathcal R} \alpha(R)\,  h^\epsilon_R \otimes h^\delta_R. \schatten p .  {}\lesssim{} \norm \alpha(\cdot ) . \ell ^p ., \qquad 0<p<\infty .
 $$
 
We now prove the reverse direction. We only pick one case, $(\epsilon,\delta) = ((1,0),(0,1))$,  to discuss the details, that is, we aim to prove that
 \begin{align}
 \norm \operatorname T_\alpha.\schatten p.  {}\gtrsim{} \norm \alpha(\cdot ) . \ell ^p ., \qquad 0<p<\infty,
 \end{align}
 where $T_\alpha$ is as defined in \eqref{T alpha product}.

To begin with, we fix $\alpha(\cdot)$ such that $\|\alpha(\cdot)\|_{\ell^p}=1$.   We want to show that  $$\norm \operatorname T_\alpha.\schatten p. {}\gtrsim{}1,$$  
 
 A collection of dyadic intervals 
 $\mathcal D_{\ell} $ will \emph{have scales separated by $\ell$ } if 
  it satisfies the conditions 
  $I\in \mathcal D_{\ell}  $ implies $I\pm2\abs {I}\in \mathcal D_{\ell} $, but $I\pm\abs {I} \not\in \mathcal D_{\ell} $ and  $\{\log_2 \abs 
  I\mid I\in\mathcal D_{\ell}\}  {}={} a+\ell \mathbb Z $ for some choice of integer $a $. 
 All dyadic intervals $\mathcal D $ are a union of $2\ell $ subcollections with scales separated by $\ell $.  We denote by $\mathcal D_{\ell}^{(1)} $  such collection of intervals in the first variable, and by $\mathcal D_{\ell}^{(2)} $  such collection of intervals in the second variable.
 Let $\big(\mathcal D_\ell^{(1)}\big)' $ be the collection of \emph{parents}  of  those dyadic intervals in $\mathcal D_\ell^{(1)} $  (the parent of $I $ is the next  dyadic larger interval) and let $\big(\mathcal D_\ell^{(2)}\big)' $ be the collection of \emph{parents}  of  those dyadic intervals in $\mathcal D_\ell^{(2)} $.

 For an integer $\ell $ to be chosen,  we can separate the scales in the 
 dyadic intervals in each variable, choosing $\mathcal D_{\ell}^{(1)} $ and $\mathcal D_{\ell}^{(2)} $  so that 
 \begin{equation}  \label{e.loweralpha} 
  \sum _{\substack{R=R_1\times R_2\in\mathcal R,\\ R_1\in\mathcal D_{\ell}^{(1)}, R_2\in \mathcal D_{\ell}^{(2)} }} \abs{\alpha(R)}^p\ge(4\ell^2)^{-p}. 
  \end{equation} 
 Let $\widetilde{\operatorname P}_\ell $ be the projection onto the span of the functions $\big\{ h_S\mid S=S_1\times S_2\in \mathcal D_\ell^{(1)}\times \big(\mathcal D_\ell^{(2)}\big)'  \big\} $, i.e.,
 $$ \widetilde{\operatorname P}_\ell (f) = \sum_{S\in \mathcal D_\ell^{(1)}\times \big(\mathcal D_\ell^{(2)}\big)' } \langle f,h_S^0\rangle h_S^0. $$ 
 Let $\widetilde{\operatorname P}'_\ell $ be the corresponding projection onto the span of the functions $\big\{ h_S\mid S=S_1\times S_2\in \big(\mathcal D_\ell^{(1)}\big)' \times \mathcal D_\ell^{(2)} \big\} $, i.e.,
 $$ \widetilde{\operatorname P}_\ell (f) = \sum_{S\in \big(\mathcal D_\ell^{(1)}\big)' \times \mathcal D_\ell^{(2)}  } \langle f,h_S^0\rangle h_S^0. $$ 
 Appealing to Proposition~\ref{p.ortholower} we  have the estimate 
 \begin{equation*}
 \norm \operatorname T_\alpha.\schatten p.\ge\norm \widetilde{\operatorname P}_\ell' \operatorname T_\alpha \widetilde{\operatorname P}_\ell
 .\schatten p.  .
 \end{equation*}
 We shall show that  $\norm \widetilde{\operatorname P}_\ell' \operatorname T_\alpha \widetilde{\operatorname P}_\ell
 .\schatten p.$ is at least $(16 \ell^2)^{-1} $  for $\ell $ sufficiently large 
 depending only on $p $. 
 
 Define
 \begin{equation*}
 \operatorname T^{(1,1)}_\alpha {}\eqdef{}  \sum _{\substack{R=R_1\times R_2\in\mathcal R,\\ R_1\in\mathcal D_{\ell}^{(1)}, R_2\in \mathcal D_{\ell}^{(2)} }} \nu_{R,R'}\ \alpha(R)\, h _{R' }(x_1,y_2)\, h_R(y_1,x_2),
 \end{equation*}
 where $R'=R'_1\times R'_2 $ with $R'_i$ the parent of $R_i $ for $i=1,2$. Then the $\mathbb S^p$ norm can be calculated exactly from \eqref{e.schatten-def} as follows
 \begin{equation*}
 \norm \operatorname  T^{(1,1)}_\alpha.\schatten p.^p= \sum _{\substack{R=R_1\times R_2\in\mathcal R,\\ R_1\in\mathcal D_{\ell}^{(1)}, R_2\in \mathcal D_{\ell}^{(2)} }}   |\alpha(R)|^p {}\ge{}(4\ell^2)^{-p},
 \end{equation*}
 where the above inequality follows from \eqref{e.loweralpha}.
 This is the main term in providing a lower bound on the Schatten norm for $\operatorname T_\alpha $.
 
 We appeal to some of the estimates used in the proof of the upper bound above.  
Define 
\begin{equation*}
\operatorname  T^m_\alpha {}\eqdef{} \sum_{S=S_1\times S_2,\ S_1\in  (\mathcal D_\ell^{(1)})',\ S_2\in (\mathcal D_\ell^{(2)})'}   \widetilde{\operatorname  A} _{m\ell+1,S},\qquad m=(m_1,m_2)\in\mathbb N^2,
\end{equation*}
 where
 \begin{align}\label{widetilde A m S}
 \widetilde{\operatorname  A}_{m,S} & {}\eqdef{}	\sum _{R\in \widetilde{\mathcal R}(m,S) }	\nu_{R,S} \, \alpha(R) \,  h^{0 }_{S_1}(x_1)h ^{0 }_{R_2 }(x_2)
   \cdot h^{0 }_{R_1}(y_1)h^{0 } _{S_2 }(y_2)
 \end{align}
and
 \begin{align*}
 \widetilde{\mathcal R}(m,S)&{} {}\eqdef{}\{ R=R_1\times R_2\in\mathcal R\mid R_1\in\mathcal D_{\ell}^{(1)}, R_2\in \mathcal D_{\ell}^{(2)} , R\subset S,\ 2 ^{m_j}\abs {R_j}=\abs {S_j},\ j=1,2\},      
 \end{align*}

 Note that 
 \begin{align*}
 \widetilde{\operatorname P}_\ell' \operatorname T_\alpha \widetilde{\operatorname P}_\ell
 &=\sum_{\substack{S=S_1\times S_2\\ S_1\in  (\mathcal D_\ell^{(1)})'\\ S_2\in (\mathcal D_\ell^{(2)})'}}
\sum_{\substack{ R=R_1\times R_2\\ R_1\in  \mathcal D_\ell^{(1)},\ R_1\subsetneq S_1\\ R_2\in  \mathcal D_\ell^{(2)},\ R_2\subsetneq S_2}} \alpha(R)\ \nu_{R,S}\ \sqrt{ |R|\over |S| }\ h^0_{S_1}(x_1)h^0_{R_2}(x_2)  h^0_{R_1}(y_1)h^0_{S_2}(y_2)\\
&={1\over2}\operatorname T^{(1,1)}_\alpha + {1\over\sqrt2} \sum _{m_2\geq2} 2 ^{-\frac12(m_2\ell+1)}\operatorname T^{(1,m_2)}_\alpha + {1\over\sqrt2} \sum _{m_1\geq2} 2 ^{-\frac12(m_1\ell+1)}\operatorname T^{(m_1,1)}_\alpha \\
&\qquad+ \sum _{\substack{m\in\mathbb N^2\\ m_1,m_2\geq2}} 2 ^{-\frac12(m_1\ell+1+m_2\ell+1)}\operatorname T^m_\alpha \\
&=: Term_1+Term_2+Term_3+Term_4.
 \end{align*}

 Then similar to the estimates in \eqref{e.normdecay product}, we have the estimate 
 \begin{equation*}
 \norm Term_i .\schatten p. {}\lesssim{}2^{-\min\left(\frac12,\frac1p\right) \ell },\qquad i=2,3,4.
 \end{equation*}
 The implied constant depends upon $0<p<\infty $.  Therefore, for an absolute choice of $\ell $, we will have the estimate 
 $\norm \operatorname T_\alpha .\schatten p . \ge(16\ell^2)^{-1} $.  This completes the proof of \eqref{e.loweralpha}.

 \bigskip
%%%%%%%%%%%%%%%%%%%%%%%%%%%%%%%%%%%%%%
\subsubsection{The Case of Higher Parameters} 
In the general $n$ parameter setting, the number of paraproducts increases dramatically. The only restriction in forming a paraproduct is that in each of the $n $ coordinates,  there must be zeros in one  of the three classes of functions corresponding to the paraproduct.

Let us pass immediately to the Haar case, without mention of the Besov norms.  Given $\epsilon=(\epsilon_1,\ldots,\epsilon_n)\in\{0,1\}^n $, set 
\begin{equation}  
\label{e.Haardfunctions}
h_R^\epsilon(x_1,\ldots ,x_n)=\prod _{j=1}^n h_{R_j}^{\epsilon_j} (x_j ) . 
\end{equation}
In this case, the proper statement of Theorem \ref{t.two-withoutbesov} is 

\begin{theorem}
\label{t.Haardparproducts}
Given $\epsilon,\delta\in \{0,1\} ^n $, we assume that in each coordinate $j$,  $\epsilon_j \cdot \delta_j =0$.  Then, we have the equivalence 
\begin{equation}  
\label{e.Haardparproducts}
\norm \sum _{R\in\mathcal R } \alpha(R) \, h_R^\epsilon\otimes h^\delta_R .\schatten p . {}\simeq{}\norm \alpha(\cdot) .\ell ^p. .
\end{equation}
\end{theorem}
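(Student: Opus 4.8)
The plan is to run the two-parameter argument of the previous subsection coordinate by coordinate, the only new ingredient being heavier index bookkeeping. Write $\operatorname T_\alpha=\sum_{R\in\mathcal R}\alpha(R)\,h_R^\epsilon\otimes h_R^\delta$. Using the hypothesis $\epsilon_j\delta_j=0$, partition the coordinates $\{1,\dots,n\}=A\sqcup B\sqcup C$, where $A=\{j:\epsilon_j=1\}$ (a father wavelet sits in the $\epsilon$, i.e.\ input, factor), $B=\{j:\delta_j=1\}$ (a father wavelet sits in the $\delta$, i.e.\ output, factor), and $C=\{j:\epsilon_j=\delta_j=0\}$ (both factors carry mean-zero Haar functions). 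On the coordinates in $C$ the operator is already diagonal, while the coordinates in $A$ and $B$ are exactly the two mechanisms isolated in the $(1,1),(0,0)$ and $(1,0),(0,1)$ cases above.

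For the upper bound I would expand every father wavelet by \eqref{e.expandindicator product}: for $j\in A$ expand $h^1_{R_j}$ in the input variable, for $j\in B$ expand $h^1_{R_j}$ in the output variable, and leave $R_j=S_j$ for $j\in C$. This produces, for each multi-index $m=(m_j)_{j\in A\cup B}$ with $m_j\ge1$, an operator $\operatorname A_m=\sum_{S\in\mathcal R}\operatorname A_{m,S}$ indexed by the containing rectangles $S$, together with the scalar gain $\prod_{j\in A\cup B}2^{-m_j/2}$, so that $\sum_R\alpha(R)\,h_R^\epsilon\otimes h_R^\delta=\sum_m 2^{-\frac12\sum_{j\in A\cup B}m_j}\operatorname A_m$. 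For fixed $S$ the input functions of $\operatorname A_{m,S}$ vary only with $\{R_j\}_{j\in B}$ and the output functions only with $\{R_j\}_{j\in A}$, so $\operatorname A_{m,S}$ is a fully populated matrix of size $\prod_{j\in A}2^{m_j}\times\prod_{j\in B}2^{m_j}$ with entries $\nu_{R,S}\alpha(R)$; Proposition~\ref{p.mn} then gives $\lVert\operatorname A_{m,S}\rVert_{\schatten p}\lesssim 2^{\delta(p)\sum_{j\in A\cup B}m_j}\big(\sum_{R\in\mathcal R(m,S)}|\alpha(R)|^p\big)^{1/p}$. Distinct $S$ differ in some coordinate, and in every coordinate type distinct dyadic intervals yield orthogonal Haar families, so the associated input tensor-product subspaces are mutually orthogonal and likewise the output subspaces; hence the singular values concatenate and $\lVert\operatorname A_m\rVert_{\schatten p}^p=\sum_S\lVert\operatorname A_{m,S}\rVert_{\schatten p}^p\lesssim 2^{\delta(p)p\sum_{j\in A\cup B}m_j}\lVert\alpha\rVert_{\ell^p}^p$, using that for fixed $m$ each $R$ lies in exactly one $\mathcal R(m,S)$. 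Summing the resulting geometric series in $m$ (the triangle inequality \eqref{e.schattentriangle} for $p\ge1$, subadditivity \eqref{e.schattentrianglep} for $0<p<1$) and noting $\tfrac12-\delta(p)=\min(\tfrac12,\tfrac1p)>0$ gives $\lVert\operatorname T_\alpha\rVert_{\schatten p}\lesssim\lVert\alpha\rVert_{\ell^p}$, exactly as in \eqref{e.normdecay product}.

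For the lower bound I would normalize $\lVert\alpha\rVert_{\ell^p}=1$ and separate scales in all $n$ coordinates. Since $\mathcal R$ is a union of $(2\ell)^n$ product subcollections with scales separated by $\ell$, I choose one carrying at least $(2\ell)^{-n}$ of the mass, $\big(\sum_{R\text{ separated}}|\alpha(R)|^p\big)^{1/p}\ge(2\ell)^{-n}$. I then apply Proposition~\ref{p.ortholower} with a pair of projections $\widetilde{\operatorname P}_\ell,\widetilde{\operatorname P}'_\ell$ chosen exactly as in the two-parameter case: in each coordinate $j\in A\cup B$ one projection keeps the separated scales while the other passes to the parent scales $(\mathcal D^{(j)}_\ell)'$, and in each $j\in C$ both keep the separated scales. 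This compresses $\operatorname T_\alpha$ to an operator whose leading piece $\operatorname T^{\mathrm{main}}_\alpha$ collects the parent term (scale jump $1$) of every father expansion. Scale separation forces both the input and the output functions of $\operatorname T^{\mathrm{main}}_\alpha$ to be orthonormal systems, so it is already in singular-value form and $\lVert\operatorname T^{\mathrm{main}}_\alpha\rVert_{\schatten p}^p=\sum_{R\text{ separated}}|\alpha(R)|^p\ge(2\ell)^{-np}$. Every surviving non-leading term sits at least $\ell$ scales higher in some father coordinate; estimating these exactly as in the upper bound (Proposition~\ref{p.mn} plus the orthogonality of the blocks) bounds the remainder by $\lesssim 2^{-\min(\frac12,\frac1p)\ell}$. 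Since this decays exponentially in $\ell$ while the main term decays only polynomially, fixing $\ell$ large (depending only on $p$) makes the main term dominate, whence $\lVert\operatorname T_\alpha\rVert_{\schatten p}\gtrsim(2\ell)^{-n}\gtrsim 1=\lVert\alpha\rVert_{\ell^p}$.

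The one genuinely delicate point is the combinatorial bookkeeping across the three coordinate types at once: one must verify that, for fixed $S$, the $A$-indices and $B$-indices really do separate into the row and column directions of a single matrix, so that Proposition~\ref{p.mn} applies with the product dimension $\prod_{j\in A\cup B}2^{m_j}$, and that for distinct $S$ one has orthogonality on both sides so the block-diagonal $\schatten p$ identity is exact. The scale-separation mechanism that produced orthonormal leading systems in the $(1,0),(0,1)$ two-parameter case carries over verbatim to each coordinate of $A\cup B$, and the coordinates in $C$ contribute only an extra orthonormal tensor factor that tightens the orthogonality without introducing any new scale sum. Once the indexing is organized, the analytic estimates are identical to those already established, so no new convergence issues arise.
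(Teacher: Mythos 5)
Your proof follows essentially the same route as the paper: the same partition of coordinates according to where $\epsilon$ and $\delta$ carry father wavelets, the same expansion \eqref{e.expandindicator product} producing the operators $\operatorname A_m=\sum_S\operatorname A_{m,S}$, the same application of Proposition~\ref{p.mn} to each block together with two-sided orthogonality across the blocks, and the same scale-separation/projection argument from the two-parameter case for the lower bound. The paper itself only sketches this theorem by deferring to Section 5.2.1, and your write-up supplies the same details correctly (the minor constant in the pigeonhole step should be $(2\ell)^{-n/p}$ rather than $(2\ell)^{-n}$, which is immaterial since any fixed power of $\ell$ suffices against the exponential decay of the remainder).
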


The proof of Theorem \ref{t.Haardparproducts} is again similar to what appeared above.  Let $\mathbb O_j $ denote the coordinates in which $\epsilon $ equals $j$.  These sets of coordinates are disjoint, and not necessarily all of $\{1,\ldots, n \} $. 

The case that both $\mathbb O_j $, for $j=0,1 $, are empty, is trivial.  The case that one of these two is the empty set is (essentially) the one parameter case, but this will fit into the discussion below.   Assume that $\mathbb O_1 $ is not empty.   Let $\mathbb O {}\eqdef{}\mathbb O_0\cup \mathbb O_1$.

We define, for choices of $S\in\mathcal R $, and $m\in \mathbb N^{\mathbb O}  $, 
\begin{align*}
\mathcal R(m,S) & {}\eqdef{}\{R\in\mathcal R \mid R_j\subset S_j,\ 2^{m_j}\abs {R_j}=\abs{S_j} ,\ j\in \mathbb O\},  
\\[5pt]
\operatorname A_m & {}\eqdef{} \sum _{S\in\mathcal R}  \operatorname A _{m,S}, 
\\[5pt]
\operatorname A  _{m,S }&  {}\eqdef{}\sum _{R\in\mathcal R(m,S) } \nu_{R,S} \alpha(R)\, 
H _{0,m,R,S}\otimes H _{1,m,R,S}, 
\\[5pt]
H _{j,m,R,S} &{}\eqdef{} \prod _{j\in \mathbb O_j} h _{S_j}(x_j)\prod _{k\not\in \mathbb O_j} h _{R_k}(x_k)  
,\qquad  j=0,1.
\end{align*}

The main point is that Proposition~\ref{p.mn} applies to the operators $\operatorname A _{m,S} $, giving the estimate 
\begin{equation*}
\norm \operatorname A_{m,S} .2.  {}\lesssim{}2 ^{\delta(p) \sum _{\mathbb O} m_j }   
\left[\sum _{R\in\mathcal R(m,S) } \abs{\alpha(R)}^p\right]^{1/p}. 
\end{equation*}
And this, plus a simple argument, gives us the estimate 
\begin{equation*}
\norm \operatorname A_{m} .\schatten p.  {}\lesssim{}2 ^{\delta(p) \sum _{\mathbb O} m_j }  \norm \alpha(\cdot).\ell ^p . .
\end{equation*} 
Finally, it is the case that for appropriate choices of signs $\nu_{R,S} $, we have 
\begin{equation*} 
\sum _{R\in\mathcal R } \alpha(R) \, h_R^\epsilon\otimes h^\delta_R  
{}={}\sum _{m\in \mathbb N ^{\mathbb O } } 2 ^{-\frac12 \sum_{\mathbb O} m_j  }\operatorname A _{m} .
\end{equation*}
This is clearly enough to prove the theorem for both the upper bound and lower bound of \eqref{e.Haardparproducts} following the arguments in Section 5.2.1. 

\subsection{The Proof of Theorem \ref{t.two-withoutbesov}: The Wavelet Case}

We can derive the same argument using similar steps as in the Haar setting in Section 5.2, combining with the wavelet basis and the almost orthogonality estimate as used in the one-parameter setting in Section   4.2.

\section{Concluding Remarks}
\label{s.concrem}

We close this paper by making several concluding remarks.  First, we chose to work with the Haar/wavelet bases for $L^2(\mathbb R)$, and then the appropriate generalization of these to the tensor product setting for $L^2(\mathbb R^n)$.  It is possible to formulate and prove analogous results by starting with the Haar/wavelet basis for $L^2(\mathbb R^{n_j})$ and then forming the $d$-parameter tensor product bases for $L^2(\mathbb R^{\sum_{j=1}^d n_j})$.  The only additional difficulty that this produces is more complicated notation.  The interested reader can easily modify the results appearing in this paper to arrive at these more general theorems.

In this paper we have focused on continuous and dyadic paraproducts.  As mentioned in the introduction, paraproducts are an equivalent way to view Hankel operators.  Once estimates for the paraproducts are known, then using a nice result of S. Petermichl \cite{stefanie} it is possible to recover the Hankel operator from an an averaging of a dyadic shift operator.  Using these results one can prove that a  Hankel operator belongs to the Schatten class $\schatten p$ if the symbol belongs to the Besov space $\besov.$.  It is carried out very nicely in the paper \cite{MR2097606} of S. Pott and M. Smith in their discussion of $\schatten p$ class, $1<p<\infty$, of one-parameter dyadic paraproducts, and the related Besov spaces.

Equivalent to the Hankel operator,
a similar statement holds for the commutator between multiplication by a symbol $b$ and the Hilbert transform (and the appropriate multiparameter analogues).  The very recent work by the authors \cite{LLW} exploited the paraproducts to establish this commutator statement in the one-parameter two-weight setting.

\bigskip
\bigskip
\bigskip
\noindent {\bf Acknowledgement}: Lacey is supported in part by National Science Foundation DMS Grant. B. D. Wick's research is supported in part by National Science Foundation Grants DMS \# 1800057, \# 2054863, and \# 2000510 and Australian Research Council DP 220100285.
Li and Wick are supported by ARC DP 220100285.

%%%%%%%%%%%%%%%%%%%%%%%%%%
\bigskip\bigskip\bigskip

\bigskip

\begin{bibdiv}

\begin{biblist}

\bib{Bo}{book}{
    AUTHOR = {Bonami, A}
    AUTHOR={Peloso, M. M.},
     TITLE = {Hankel operators on {H}ardy spaces of two classical domains},
 BOOKTITLE = {Functional analysis, {VI} ({D}ubrovnik, 1999)},
    SERIES = {Various Publ. Ser. (Aarhus)},
    VOLUME = {45},
     PAGES = {7--18},
 PUBLISHER = {Univ. Aarhus, Aarhus},
      YEAR = {2000},
   MRCLASS = {47B35 (32A35 47B10)},
  MRNUMBER = {1816183},
}

\bib{Cal}{article}{
author={Calder\'on, A.P. },
title={Commutators of Singular Integral Operators}, 
journal={Proc. Nat. Acad. Sci., USA},  
volume={53},
date={1965}, 
pages={1092\ndash 1099}
}

\bib{MR1397362}{article}{
    author={Chao, J.-A.},
    author={Peng, L.},
     title={Schatten classes and commutators on simple martingales},
   journal={Colloq. Math.},
    volume={71},
      date={1996},
    number={1},
     pages={7\ndash 21}
}

\bib{cf2}{article}{
    author={Chang, S.-Y. A.},
    author={Fefferman, R.},
     title={A continuous version of duality of $H\sp{1}$ with BMO on the
            bidisc},
    journal={Ann. of Math. (2)},
    volume={112},
    date={1980},
    number={1},
    pages={179\ndash 201}
}

\bib{CLMS} {article}{
author={Coifman, R.},
author={Lions,  P.L.},
author={Meyer, Y.},
author={Semmes, S.},
title={Compensated compactness and Hardy spaces},
journal={J. Math. Pures Appl.},
volume={72}, 
date={1993}, 
pages={247\ndash 286}
}

\bib{CRW}{article}{
author={Coifman, R. R.},
author={Rochberg,  R.},
author={Weiss,  G.},
title={Factorization theorems for Hardy spaces in several variables},
journal={Ann. of Math.}, 
volume={103},
date={1976}, 
pages={611\ndash 635}
}

\bib{Connes}{book}{
author={Connes, A.},
title={Noncommutative geometry},
PUBLISHER={Academic Press, Inc., San Diego, CA}, 
date={1994} 
}

 \bib{MR1961195}{article}{
    author={Ferguson, S. H.},
    author={Lacey, M. T.},
     title={A characterization of product {BMO} by commutators},
   journal={Acta Math.},
    volume={189},
      date={2002},
    number={2},
     pages={143\ndash 160}
}
 
 \bib{CS}{article}{
    AUTHOR = {Cotlar, M.},
    AUTHOR = {Sadosky, C.},
     TITLE = {Two distinguished subspaces of product {BMO} and
              {N}ehari-{AAK} theory for {H}ankel operators on the torus},
   JOURNAL = {Integral Equations Operator Theory},
  FJOURNAL = {Integral Equations and Operator Theory},
    VOLUME = {26},
      YEAR = {1996},
    NUMBER = {3},
     PAGES = {273--304},
}

\bib{MR2002k:47066}{article}{
    author={Gillespie, T. A.},
    author={Pott, S.},
    author={Treil, S.},
    author={Volberg, A.},
     title={Logarithmic growth for matrix martingale transforms},
   journal={J. London Math. Soc. (2)},
    volume={64},
      date={2001},
    number={3},
     pages={624\ndash 636}
}

\bib{HLW}{article}{
    AUTHOR = {Han, Y.},
    AUTHOR = {Li, J.},
    AUTHOR = {Ward, L. A.},
     TITLE = {Hardy space theory on spaces of homogeneous type via
              orthonormal wavelet bases},
   JOURNAL = {Appl. Comput. Harmon. Anal.},
  FJOURNAL = {Applied and Computational Harmonic Analysis. Time-Frequency
              and Time-Scale Analysis, Wavelets, Numerical Algorithms, and
              Applications},
    VOLUME = {45},
      YEAR = {2018},
    NUMBER = {1},
     PAGES = {120--169},
}

\bib{MR87g:42028}{article}{
    author={Journ{\'e}, J.-L.},
     title={A covering lemma for product spaces},
   journal={Proc. Amer. Math. Soc.},
   volume={96},
      date={1986},
    number={4},
     pages={593\ndash 598}
} 

\bib{MR88d:42028}{article}{
    author={Journ{\'e}, J.-L.},
     title={Calder\'on-Zygmund operators on product spaces},
   journal={Rev. Mat. Iberoamericana},
    volume={1},
      date={1985},
    number={3},
     pages={55\ndash 91}
}

\bib{MR924766}{article}{
    author={Janson, S.},
    author={Peetre, J.},
     title={Paracommutators---boundedness and Schatten-von Neumann
            properties},
   journal={Trans. Amer. Math. Soc.},
    volume={305},
      date={1988},
    number={2},
     pages={467\ndash 504}
}

\bib{JW}{article}{
    author={Janson, S.},
    author={Wolff, T.},
   title={Schatten classes and commutators of singular integral operators}, 
   journal={Ark. Mat}, 
   volume={20}, 
   date={1982}, 
   pages={301\ndash310}
}

\bib{LLW}{article}{
   author={Lacey, M. T.},
   author={Li, J.},
   author={Wick, B. D.},
title={Schatten Classes and Commutators in the Two Weight Setting, I. Hilbert Transform}
journal={Potential Analysis, to appear.}
}

\bib{MR2664567}{article}{
   author={Lacey, M. T.},
   author={Petermichl, S.},
   author={Pipher, J. C.},
   author={Wick, B. D.},
   title={Iterated Riesz commutators: a simple proof of boundedness},
   conference={
      title={Harmonic analysis and partial differential equations},
   },
   book={
      series={Contemp. Math.},
      volume={505},
      publisher={Amer. Math. Soc.},
      place={Providence, RI},
   },
   date={2010},
   pages={171--178}
}

\bib{MR2656521}{article}{
   author={Lacey, M. T.},
   title={Haar shifts, commutators, and Hankel operators},
   journal={Rev. Un. Mat. Argentina},
   volume={50},
   date={2009},
   number={2},
   pages={1--13},
   issn={0041-6932}
}

\bib{MR2423629}{article}{
   author={Lacey, M. T.},
   title={Lectures on Nehari's theorem on the polydisk},
   conference={
      title={Topics in harmonic analysis and ergodic theory},
   },
   book={
      series={Contemp. Math.},
      volume={444},
      publisher={Amer. Math. Soc.},
      place={Providence, RI},
   },
   date={2007},
   pages={185\ndash 213}
}

\bib{math.CA/0502334}{article}{
   author={Lacey, M.},
   author={Metcalfe, J.},
   title={Paraproducts in one and several parameters},
   journal={Forum Math.},
   volume={19},
   date={2007},
   number={2},
   pages={325\ndash 351}
}

\bib{MR1060790}{article}{
    author={Li, C.},
     title={Boundedness of paracommutators on $L\sp p$-spaces},
   journal={Acta Math. Sinica},
    volume={6},
      date={1990},
    number={2},
     pages={131\ndash 147}
}

\bib{MR3420475}{article}{
      author={Li, J.},
      author={Pipher, J.},
      author={Ward, L.~A.},
       title={Dyadic structure theorems for multiparameter function spaces},
        date={2015},
     journal={Rev. Mat. Iberoam.},
      volume={31},
      number={3},
       pages={767\ndash 797},
}

\bib{LMSZ}{article}{
    AUTHOR = {Lord, S.},
    AUTHOR = {McDonald, E.},
    AUTHOR = {Sukochev, F.},
    AUTHOR = {Zanin, D.},
     TITLE = {Quantum differentiability of essentially bounded functions on
              {E}uclidean space},
   JOURNAL = {J. Funct. Anal.},
    VOLUME = {273},
      YEAR = {2017},
    NUMBER = {7},
     PAGES = {2353\ndash 2387}
}

\bib{math.CA/0411607}{article}{
   author={Muscalu, C.},
   author={Pipher, J.},
   author={Tao, T.},
   author={Thiele, C.},
   title={Multi-parameter paraproducts},
   journal={Rev. Mat. Iberoam.},
   volume={22},
   date={2006},
   number={3},
   pages={963--976}
}

\bib{camil}{article}{
   author={Muscalu, C.},
   author={Pipher, J.},
   author={Tao, T.},
   author={Thiele, C.},
   title={Bi-parameter paraproducts},
   journal={Acta Math.},
   volume={193},
   date={2004},
   number={2},
   pages={269--296}
}

\bib{MR2002m:47038}{article}{
    author={Nazarov, F.},
    author={Pisier, G.},
    author={Treil, S.},
    author={Volberg, A.},
     title={Sharp estimates in vector Carleson imbedding theorem and for
            vector paraproducts},
   journal={J. Reine Angew. Math.},
    volume={542},
      date={2002},
     pages={147\ndash 171}
}

\bib{nehari}{article}{
     author={Nehari, Z.},
     title={On bounded bilinear forms},
     journal={Ann. of Math. (2)},
     volume={65},
     date={1957},
     pages={153\ndash 162}
}

\bib{MR96j:42006}{article}{
    author={Paluszy{\'n}ski, M.},
     title={Characterization of the Besov spaces via the commutator operator
            of Coifman, Rochberg and Weiss},
   journal={Indiana Univ. Math. J.},
    volume={44},
      date={1995},
    number={1},
     pages={1\ndash 17}
}

\bib{Pel}{article}{
    AUTHOR = {Peller, V. V.},
     TITLE = {Vectorial {H}ankel operators, commutators and related
              operators of the {S}chatten-von {N}eumann class {$\gamma
              _{p}$}},
   JOURNAL = {Integral Equations Operator Theory},
    VOLUME = {5},
      YEAR = {1982},
    NUMBER = {2},
     PAGES = {244--272}
}

\bib{stefanie}{article}{
    author={Petermichl, S.},
     title={Dyadic shifts and a logarithmic estimate for Hankel operators
            with matrix symbol},
  language={English, with English and French summaries},
   journal={C. R. Acad. Sci. Paris S\'er. I Math.},
    volume={330},
      date={2000},
    number={6},
     pages={455\ndash 460}
}

\bib{MR2097606}{article}{
    author={Pott, S.},
    author={Smith, M. P.},
     title={Paraproducts and Hankel operators of Schatten class via
            $p$-John-Nirenberg theorem},
   journal={J. Funct. Anal.},
    volume={217},
      date={2004},
    number={1},
     pages={38\ndash 78}
}

\bib{RochSem}{article}{
   author={Rochberg, R.},
   author={Semmes, S.},
   title={Nearly weakly orthonormal sequences, singular value estimates, and
   Calder\'on--Zygmund operators},
   journal={J. Funct. Anal.},
   volume={86},
   date={1989},
   number={2},
   pages={237--306}
}

\bib{Tri}{article}{
    AUTHOR = {Triebel, H.},
     TITLE = {Theory of function spaces},
    SERIES = {Monographs in Mathematics},
    VOLUME = {78},
 PUBLISHER = {Birkh\"{a}user Verlag, Basel},
      YEAR = {1983},
     PAGES = {284},
      ISBN = {3-7643-1381-1},
   MRCLASS = {46Exx},
  MRNUMBER = {781540},
       DOI = {10.1007/978-3-0346-0416-1},
       URL = {https://doi.org/10.1007/978-3-0346-0416-1},
}
	
\bib{U}{article}{
author={Uchiyama, A.},
title={On the compactness of operators of Hankel type}
journal={T\^ohoku Math. J.}, 
volume={30},
date={1978}, 
pages={163\ndash 171}
	
}

\end{biblist}

\end{bibdiv}

\end{document}